\let\SymTensorFieldSpace\relax
\let\MetricSpace\relax
\MyNewMathOperator{\SymTensorFieldSpace}		{command={\mathbrush{S}}}
\MyNewMathOperator{\MetricSpace}		{command={\mathbrush{M}}}
\begin{document}
	
\MakeTitle

\tableofcontents

\listoftodos

\section{Introduction}

In the seminal work \parencite{Palais1961}, \citeauthor{Palais1961} establishes the existence of slices for proper actions of finite-dimensional Lie groups on finite-dimensional manifolds.
This classical result shows that a proper action behaves to a large extent like the action of a compact Lie group.
In particular, the orbits are embedded submanifolds and the action on a suitable neighborhood of a given orbit can be reduced to the action of the compact stabilizer group on a slice.
As a consequence, the decomposition of the manifold into orbit type subsets is a stratification.

For many examples in physics and global analysis the symmetry groups and the manifolds are, however, infinite-dimensional.
In these cases, Palais' theorem is not applicable and the slices have to be constructed by hand, see, for example, \parencite{KondrackiRogulski1986,AbbatiCirelliEtAl1989} for gauge theory and \parencite{IsenbergMarsden1982,Ebin1970} for general relativity.
Recently, the general theory of infinite-dimensional Lie groups and their representations attracted much attention, see \eg \parencite{JanssensNeeb2015,NeebPianzol2010}.
We refer to \parencite{Neeb2006} and to the book of \textcite{GloecknerNeeb2013} (to be published) for a status report.
Nonetheless, general theorems concerning infinite-dimensional (non-linear) actions are still rare in the literature, especially, if one leaves the Banach space realm.

The aim of this paper is to present general slice theorems for the action of Fréchet Lie groups on Fréchet manifolds.
Moreover, we will show that the existence of slices implies that the decomposition of the manifold into orbit types yields a stratification.
We emphasize that the extension of these facts, well-known in the finite-dimensional setting, to infinite dimensions is not straightforward, because one faces a number of problems:
\begin{itemize}
	\item
		A closed subspace of a Fréchet or a Banach space does not need to have a topological complement.
		A simple example is provided by the closed subspace \( c_0 \subseteq l^\infty \) of sequences converging to zero in the Banach space of bounded sequences, see \parencite{Phillips1940,Whitley1966}.
		Since the translation action of \( c_0 \) on \( l^\infty \) is proper, properness does not guarantee the existence of topological complements. 
		In particular, slices need not exist for proper actions of infinite-dimensional Lie groups, even at the infinitesimal level.
	\item
		In the Banach setting, one relies on the classical inverse function theorem to extend an infinitesimal complement of the orbit to a local slice for the \( G \)-action.
		However, beyond the Banach realm, inverse functions theorems need additional assumptions such as finite dimensionality of certain spaces as in \parencite{Gloeckner2006a,Teichmann2001,Hiltunen1999} or invertibility of the derivative in a whole neighborhood as in the Nash--Moser theorem \parencite{Hamilton1982}.
	\item
		In the standard construction of a slice for a finite-dimensional Lie group action, an invariant Riemannian metric and the Riemannian exponential map play a major role.
		In infinite dimensions, Riemannian metrics are often only weakly non-degenerate and, moreover, one has to cope with issues related to the geodesic flow due to the non-existence of a general solution theory of differential equations in Fréchet spaces.
		We will comment further on these problems in \cref{cha::lcm:gradedRiemannianGeometry}.
\end{itemize}
In view of these problems, we cannot expect to recover an all-in-one slice theorem for infinite-dimensional actions.
Instead, we will pursue a modular approach.
First, we establish \cref{prop:slice:sliceTheoremGeneral} which ensures the existence of slices under very general assumptions. 
These assumptions are formulated in such a way that they can be verified in multiple ways, exploiting different features of the example one is interested in.
As an illustration of this philosophy, we will establish two further slice theorems.
First, \cref{prop:slice:sliceTheoremLinearAction} provides a slice for the linear action of a compact finite-dimensional Lie group on a Fréchet space.
Here, we use the ambient linear structure to circumvent the problems concerning the Riemannian exponential map.
Moreover, the finite-dimensionality of the group enables us to use the inverse function theorem of \textcite{Gloeckner2006a} (as we work in the context of Fréchet spaces, the more restrictive formulations of the inverse function theorem given in \parencite{Teichmann2001,Hiltunen1999} are actually sufficient for our purposes).
Second, \cref{prop::liegroup:sliceTheorem} provides a slice for the tame action of a tame Fréchet Lie group on a tame Fréchet manifold.
In this context, we use the Nash--Moser inverse function theorem phrased in the category of tame Fréchet spaces \parencite{Hamilton1982}.
As another important tool, we develop the concept of a graded Riemannian metric, which allows for constructing a path-length metric compatible with the manifold topology and a local addition.
As a special case, we recover the slice theorem of \textcite{Subramaniam1984} for elliptic actions on spaces of smooth sections of fiber bundles.
Finally, in \cref{sec:sliceTheorem:productActions}, we establish a slice theorem for Lie group product actions.

In the second part of the paper, we investigate the natural stratification of the \( G \)-manifold \( M \) and of the orbit space \( M / G \) by orbit types for a proper Lie group action of \( G \) on \( M \) admitting a slice at every point.
We establish that the existence of slices and a certain approximation property imply that the decomposition into orbit type subsets is a stratification.
In particular, we discuss local finiteness and regularity.
We show that the orbit type stratification is, in general, not locally finite by giving an example of an action of a compact group on an infinite-dimensional vector space with infinitely many orbit types, see \cref{ex:orbitTypeStratification:locallyFinite:testFunctions}.
Finally, we introduce the notion of smooth regularity of a stratification, and show that the orbit type stratification possesses this property.

The results of this paper are used in \parencite{DiezRudolphReduction,DiezThesis}, where we develop a general theory for singular symplectic reduction in infinite dimensions including an application to gauge theory.

\paragraph*{Acknowledgments}
The authors are very much indebted to M.~Schmidt for reading the manuscript and for helpful remarks.
We gratefully acknowledge support of the Max Planck Institute for Mathematics in the Sciences in Leipzig and of the University of Leipzig.

\section{Slices}

As announced above, the main results of this paper are obtained within the Fréchet category.
However, the generalities below apply to \( G \)-manifolds modeled on arbitrary locally convex spaces.
In particular, we note that we allow the model space of the manifold to vary from chart to chart.
For the differential calculus on locally convex spaces we refer to \parencite{Neeb2006,GloecknerNeeb2013}.

\subsection{Lie group actions} \label{sec::lieGroupAction} 

Let $G$ be a Lie group and let $M$ be a manifold.
A smooth map $\Upsilon: G \times M \to M$ is called a left action if 
\begin{equation}
	\Upsilon_e = \id_M \quad \text{and} \quad \Upsilon_g \circ \Upsilon_h = \Upsilon_{gh}
\end{equation}
hold for all \( g, h \in G \), where \( \Upsilon_g: M \to M \) denotes the map \( m \mapsto \Upsilon(g,m) \).
The triple \( (M, G, \Upsilon) \) will be referred to as a \emphDef{Lie group action}.
For a given Lie group \( G \), the pair \( (M, \Upsilon) \) will be called a \( G \)-manifold.
Within this terminology, further attributes apply to all elements of the triple \( (M, G, \Upsilon) \).
For example, a tame Fréchet \( G \)-manifold \( (M, \Upsilon) \) is a tame Fréchet manifold \( M \) endowed with a tame smooth action \( \Upsilon \) of a tame Fréchet Lie group \( G \).
We often omit the explicit indication of the action \( \Upsilon \).
Then, we simply write \( M \) for the \( G \)-manifold and denote the action of \( g \in G \) on \( m \in M \) by \( g \cdot m \equiv \Upsilon_g (m) \).
Furthermore, for \( m \in M \), we denote the orbit map $g \mapsto g \cdot m$ by $\Upsilon_m: G \to M$.
Its image $G \cdot m = \set{g \cdot m \given g \in G} \subseteq M$ is the orbit of the \( G \)-action through $m$.
For every element \( A \) of the Lie algebra \( \LieA{g} \) of \( G \), the Killing vector field $A^*$ on $M$ is defined by
\begin{equation}
	A^*_m \defeq \tangent_e \Upsilon_m (A), \qquad m \in M.
\end{equation}
In the dot notation, we write \( A^*_m \) as \( A \ldot m \).
If \( G \) has a smooth exponential map, then every Killing vector field \( A^* \) has a flow, namely $(m, t) \mapsto \exp(tA) \cdot m$.

The inverse image $G_m \defeq \Upsilon^{-1}_m(m) = \set{g \in G \given g \cdot m = m}$ of \( m \) under \( \Upsilon_m \) is a subgroup of \( G \), called the \emphDef{stabilizer subgroup} of $m$.
It is not known, even for Banach Lie group actions, whether \( G_m \) is always a \emph{Lie} subgroup, see \parencite[Problem~IX.3.b]{Neeb2006}.
However, in \cref{prop:properAction:stabilizerLieSubgroup} below, we will see that for proper actions this is the case.
The \( G \)-action is called free if all stabilizer subgroups are trivial.
In view of the equivariance relation \( G_{g \cdot m} = g G_m g^{-1} \), for every \( m \in M \) and \( g \in G \), we can assign to every orbit \( G \cdot m \) the conjugacy class \( (G_m) \), which is called the \emphDef{orbit type} of \( m \).
The usual partial order of subgroups transfers to orbit types by declaring\footnote{Often, the opposite order on the space of conjugacy classes is used, see \eg, \parencite[Section~2.4.14]{OrtegaRatiu2003}, \parencite[Section~4.3.1]{Pflaum2001a}. Following \parencite[Definition~2.6.1]{DuistermaatKolk1999}, the set of orbit types carries another natural order defined by saying that \( G \cdot m \) dominates \( G \cdot p \) if there exists a \( G \)-equivariant (not necessarily continuous) map \( G \cdot m \to G \cdot p \). Then, with our convention, \( (G_m) \leq (G_p) \) if and only if \( G \cdot m \) dominates \( G \cdot p \), which succinctly captures the intuition that a larger orbit has a smaller stabilizer subgroup.}
\begin{equation}\begin{split}
	(H) \leq (\tilde{H}) 	\quad 
		& \leftrightarrow \quad K \subseteq \tilde{K} \text{ for some representatives } K \in (H), \tilde{K} \in (\tilde{H}), \\
		& \leftrightarrow \quad \text{there exists } a \in G \text{ such that } a H a^{-1} \subseteq \tilde{H}.
\end{split}\end{equation} 
We say that \( H \) is \emphDef{subconjugate} to \( \tilde{H} \), if it is conjugate to a subgroup of \( \tilde{H} \).
For every closed subgroup \( H \subseteq G \), define the following subsets of \( M \):
\begin{align*}
	M_{H} &= \set{m \in M \given G_m = H} \\
	M_{\leq H} &= \set{m \in M \given G_m \subseteq H} = \bigcup_{K \subseteq H} M_{K}\\
	M_{\geq H} &= \set{m \in M \given H \subseteq G_m} = \bigcup_{K \supseteq H} M_{K}\\
\\
	M_{(H)} &= \set{m \in M \given (G_m) = (H)} \\
	M_{\leq (H)} &= \set{m \in M \given (G_m) \leq (H)} = \bigcup_{(K) \leq (H)} M_{(K)} \\
	M_{\geq (H)} &= \set{m \in M \given (H) \leq (G_m)} = \bigcup_{(K) \geq (H)} M_{(K)} \, .
\end{align*}
The subset \( M_{H} \) is called the \emphDef{isotropy type subset} and \( M_{(H)} \) is the \emphDef{subset of orbit type \( (H) \)}.
Moreover, \( M_{\geq H} \) is the set of fixed points under the action of \( H \).
Analogous definitions hold for every subset \( N \subseteq M \), so, for example, \( N_H = N \cap M_H \).

\subsection{Generalities on slices}
\label{sec::slices}

Slices provide a valuable tool to investigate group actions.
They reduce a \( G \)-action on a manifold \( M \) to an action of the stabilizer subgroup on an invariant submanifold.
In the sequel, it will be important that the stabilizer subgroup has the following property.
\begin{defn}
	A Lie subgroup \( H \subseteq G \) is called \emphDef{principal} if the natural fibration \( G \to G \slash H \) is a principal bundle.
\end{defn}
In finite dimensions, every closed subgroup of a Lie group is a principal Lie subgroup.
Moreover, every locally compact subgroup of a Banach Lie group is a principal Lie subgroup, see \parencite[Theorem~IV.3.16 and Remark~IV.4.13b]{Neeb2006}.
We refer to \parencite[Section~7.1.4]{GloecknerNeeb2013} for further details.
If the stabilizer subgroup \( G_m \) is principal, then the stabilizer subgroup of every other point of the orbit \( G \cdot m \) is principal, too.

\begin{defn}
	\label{def:slice:slice}
	Let \( M \) be a \( G \)-manifold.
	A \emphDef{slice} at \( m \in M \) is a submanifold \( S \subseteq M \) containing \( m \) with the following properties:
	\begin{thmenumerate}[label=(SL\arabic*), ref=(SL\arabic*), leftmargin=*] 
		\item \label{i::slice:SliceDefSliceInvariantUnderStab}
			The submanifold \( S \) is invariant under the induced action of the stabilizer subgroup \( G_m \), that is \( G_m \cdot S \subseteq S \).

		\item \label{i::slice:SliceDefOnlyStabNotMoveSlice}
			Any \( g \in G \) with \( (g \cdot S) \cap S \neq \emptyset \) is necessarily an element of \( G_m \). 

		\item \label{i::slice:SliceDefLocallyProduct}
			The stabilizer \( G_m \) is a principal Lie subgroup of \( G \) and the principal bundle \( G \to G \slash G_m \) admits a local section \( \chi: G \slash G_m \supseteq U \to G \) defined on an open neighborhood \( U \) of the identity coset \( \equivClass{e} \) in such a way that the map
			\begin{equation}
				\chi^S: U \times S \to M, \qquad (\equivClass{g}, s) \mapsto \chi(\equivClass{g}) \cdot s
			\end{equation}
			is a diffeomorphism onto an open neighborhood \( V \subseteq M \) of \( m \). We call \( V \) a \emphDef{slice neighborhood} of \( m \).

		\item \label{i::slice:SliceDefPartialSliceSubmanifold}
			The partial slice \( S_{(G_m)} = \set{s \in S \given G_s \text{ is conjugate to } G_m} \) is a closed submanifold of \( S \).
			\qedhere
	\end{thmenumerate}
\end{defn}

\begin{remark}
	In the infinite-dimensional context, many variations of the concept of a slice are discussed in the literature.
	Most of these notions of a slice are weaker than our definition.
	For example, \parencite[Section~5]{FischerMarsdenEtAl1980} and \parencite[Definition~44.17]{KrieglMichor1997} require the map \( \chi^S \) in \iref{i::slice:SliceDefLocallyProduct} to be merely a homeomorphism instead of a diffeomorphism.
	In \parencite[Definition~2.3.1]{DuistermaatKolk1999} and \parencite[Definition~4.2.2]{Sniatycki2013} the condition \iref{i::slice:SliceDefLocallyProduct} is replaced by its infinitesimal version:
	\begin{align}
		\TBundle_m M &= \TBundle_m (G \cdot s) \oplus \TBundle_m S, \\
		\TBundle_s M &= \TBundle_s (G \cdot s) + \TBundle_s S \quad \text{ for all } s \in S. 
	\end{align}
	These transversality relations imply \iref{i::slice:SliceDefLocallyProduct} if the inverse function theorem is available, see \parencite[Proof of Theorem~2.3.26]{OrtegaRatiu2003}.
	None of these other definitions include \iref{i::slice:SliceDefPartialSliceSubmanifold}, which is, however, crucial in the study of orbit type subsets.
	We will see in \cref{prop:properAction:sliceOrbitTypeEqualsSliceStab} that \iref{i::slice:SliceDefPartialSliceSubmanifold} holds in certain situations automatically.
	
	Yet another generalization of the notion of a slice, applicable to group actions on stratified spaces, is proposed in \parencite{IsenbergMarsden1982}. 
\end{remark}
We now record a few important consequences resulting from the existence of slices. 
\begin{prop}
	Let \( M \) be a \( G \)-manifold.
	Assume that there exists a slice \( S \) at the point \( m \in M \).
	Then, the following holds: 
	\begin{thmenumerate}
		\item \label{prop::slice:SliceDefSliceSweepIsOpen} 
			The \( G \)-closure \( G \cdot S \) of \( S \) is an open neighborhood of the orbit \( G \cdot m \) in \( M \).
			Moreover, \( S \) is closed in \( G \cdot S \). 
		\item \label{prop::slice:mHasMaximalStabilizerOfWholeSlice} 
			For every \( s \in S \), the stabilizer \( G_s \) is a subgroup of \( G_m \).
			In other words, the point \( m \) has the maximal stabilizer of the whole slice.
			Symbolically, \( S \subseteq M_{\leq G_m} \).
		
		\item \label{prop::slice:sliceNeighbourhoodHasSubconjugatedStabilizer}
			There exists an open neighborhood \( V \subseteq M \) of \( m \) such that any point of \( V \) has a stabilizer that is conjugate to a subgroup of \( G_m \).
			In other words, \( V \subseteq M_{\leq (G_m)} \). \qedhere
	\end{thmenumerate}
\end{prop}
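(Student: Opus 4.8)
The plan is to extract all three assertions from the local product structure \iref{i::slice:SliceDefLocallyProduct} combined with the separation property \iref{i::slice:SliceDefOnlyStabNotMoveSlice}. As a preliminary observation I would record that the slice is contained in its own slice neighborhood: since the local section satisfies \( \pi \circ \chi = \id \) for the bundle projection \( \pi: G \to G \slash G_m \), the value \( \chi(\equivClass{e}) \) lies in \( G_m \), so \iref{i::slice:SliceDefSliceInvariantUnderStab} gives \( \chi^S(\{\equivClass{e}\} \times S) = \chi(\equivClass{e}) \cdot S = S \), whence \( S \subseteq V \).

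For \cref{prop::slice:SliceDefSliceSweepIsOpen}, openness of \( G \cdot S \) then follows formally: from \( S \subseteq V \subseteq G \cdot S \) and the \( G \)-invariance of \( G \cdot S \) I obtain \( G \cdot S = G \cdot V = \bigcup_{g \in G} g \cdot V \), which is open as a union of the open sets \( g \cdot V \). Since \( m \in S \), this open set is a neighborhood of \( G \cdot m \).

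The only substantial point is that \( S \) is closed in \( G \cdot S \). Here I would construct a continuous projection onto the base by defining \( \pi_S: G \cdot S \to G \slash G_m \), \( g \cdot s \mapsto \equivClass{g} \). This map is well defined precisely because of \iref{i::slice:SliceDefOnlyStabNotMoveSlice}: if \( g \cdot s = g' \cdot s' \) with \( s, s' \in S \), then \( g^{-1} g' \) carries \( s' \in S \) into \( S \), so \( g^{-1} g' \in G_m \) and \( \equivClass{g} = \equivClass{g'} \). Its fibre over the identity coset is \( \pi_S^{-1}(\equivClass{e}) = G_m \cdot S = S \). On \( V \) the map \( \pi_S \) coincides with \( (\chi^S)^{-1} \) followed by the projection to \( G \slash G_m \), hence is continuous there; using the \( G \)-equivariance relation \( \pi_S(g \cdot p) = g \cdot \pi_S(p) \) together with the covering \( G \cdot S = \bigcup_{g \in G} g \cdot V \), continuity propagates to all of \( G \cdot S \). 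As \( G \slash G_m \) is Hausdorff, the singleton \( \{\equivClass{e}\} \) is closed, so \( S = \pi_S^{-1}(\equivClass{e}) \) is closed in \( G \cdot S \). I expect this globalization of continuity, resting on the local triviality furnished by \iref{i::slice:SliceDefLocallyProduct}, to be the main obstacle; everything else is formal.

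Finally, \cref{prop::slice:mHasMaximalStabilizerOfWholeSlice} is immediate from \iref{i::slice:SliceDefOnlyStabNotMoveSlice}: for \( s \in S \) and \( g \in G_s \) the point \( s = g \cdot s \) lies in \( (g \cdot S) \cap S \), forcing \( g \in G_m \), so \( G_s \subseteq G_m \). For \cref{prop::slice:sliceNeighbourhoodHasSubconjugatedStabilizer} I take \( V \) to be the slice neighborhood and write an arbitrary \( p \in V \) as \( p = \chi(\equivClass{g}) \cdot s \); the equivariance relation for stabilizers gives \( G_p = \chi(\equivClass{g}) \, G_s \, \chi(\equivClass{g})^{-1} \), and since \( G_s \subseteq G_m \) by the previous part, \( (G_p) = (G_s) \leq (G_m) \), that is \( V \subseteq M_{\leq (G_m)} \).
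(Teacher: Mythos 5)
Your proof is correct, and for \cref{prop::slice:mHasMaximalStabilizerOfWholeSlice}, \cref{prop::slice:sliceNeighbourhoodHasSubconjugatedStabilizer} and the openness of \( G \cdot S \) it is in substance identical to the paper's argument (the paper also tacitly uses your preliminary observation \( S \subseteq V \subseteq G \cdot S \) to write \( G \cdot S = \bigcup_{g \in G} g \cdot V \); spelling it out is a small improvement). The one place where you genuinely diverge is the closedness of \( S \) in \( G \cdot S \). The paper argues in product coordinates: \( \set{\equivClass{e}} \times S \) is closed in \( U \times S \), hence \( S \) is closed in the slice neighborhood \( V \) by \iref{i::slice:SliceDefLocallyProduct}, and it passes from there to closedness in \( G \cdot S \) without further comment --- a step that strictly speaking requires examining the intersections \( S \cap (g \cdot V) \) for all \( g \in G \), since closedness in one member of an open cover does not by itself yield closedness in the union. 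You instead construct the projection \( \pi_S : G \cdot S \to G \slash G_m \), \( g \cdot s \mapsto \equivClass{g} \), well defined exactly by \iref{i::slice:SliceDefOnlyStabNotMoveSlice}, prove continuity locally through \( \chi^S \) and globally by equivariance over the cover \( \bigcup_{g \in G} g \cdot V \), and exhibit \( S = \pi_S^{-1}(\equivClass{e}) \) as the preimage of a closed point. This buys you two things: the globalization is handled explicitly rather than left implicit, and \( \pi_S \) is precisely the topological shadow of the equivariant retraction \( r : G \cdot S \to G \cdot m \), \( g \cdot s \mapsto g \cdot m \), which the paper constructs and proves smooth in the very next proposition --- so your route anticipates that result at the continuous level at essentially no extra cost. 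Two details you should make explicit: the fibre computation \( \pi_S^{-1}(\equivClass{e}) = G_m \cdot S = S \) uses that \( G_m \) is a group, so that \iref{i::slice:SliceDefSliceInvariantUnderStab} upgrades from \( G_m \cdot S \subseteq S \) to \( G_m \cdot S = S \) (the same point underlies your claim \( \chi(\equivClass{e}) \cdot S = S \)); and the closedness of \( \set{\equivClass{e}} \) in \( G \slash G_m \), which you assert via Hausdorffness, needs a one-line justification since properness of the action is \emph{not} assumed in this proposition --- it follows because \( G_m = \Upsilon_m^{-1}(m) \) is closed in \( G \) (as \( M \) is Hausdorff) and the quotient of a topological group by a closed subgroup is Hausdorff, or alternatively from the manifold structure on \( G \slash G_m \) guaranteed by \iref{i::slice:SliceDefLocallyProduct}.
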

\begin{proof}
	The set \( G \cdot S \) is open, because it is the union of the open sets \( \bigUnion_{g \in G} g \cdot V \), where \( V \) is the open slice neighborhood of \iref{i::slice:SliceDefLocallyProduct}.
	Furthermore, \( S \) is closed in \( G \cdot S \), because \( \set{\equivClass{e}} \times S \) is closed in \( U \times S \) and thus \( S \) is closed in the slice neighborhood \( V \) by \iref{i::slice:SliceDefLocallyProduct}.
	The second claim follows directly from property \iref{i::slice:SliceDefOnlyStabNotMoveSlice}.
	For the third point, let \( V \) be the slice neighborhood as described in \iref{i::slice:SliceDefLocallyProduct}.
	Since every \( v \in V \) is obtained by translation of a point in the slice \( S \), there exists \( g \in G \) such that \( g \cdot v \in S \).
	Using equivariance of stabilizer subgroups, we conclude \( g G_v g^{-1} = G_{g \cdot v} \subseteq G_m \).
\end{proof}
\Cref{prop::slice:sliceNeighbourhoodHasSubconjugatedStabilizer} is called the neighboring subgroups theorem.
In the finite-dimensional context, it is due to \textcite{MontgomeryZippin1942}.
Clearly, an action violating the neighboring subgroups property cannot have slices.
This observation was used in \parencite{MichorSchichl1998} to show that, for a fiber bundle \( F \), the natural action of the group of vertical automorphism of \( F \) on the space of (generalized) connections on \( F \) does not admit a slice.

We now discuss the related concepts of an equivariant retraction and a tubular neighborhood.
\begin{defn}
	Let \( M \) be a \( G \)-manifold and let \( O \subseteq M \) be a \( G \)-orbit.
	We say that the \( G \)-action has a \emphDef{tubular neighborhood} of \( O \) if for some \( m \in O \) the following holds: 
	\begin{thmenumerate}
		\item
			The stabilizer \( G_m \) is a principal Lie subgroup of \( G \).
		\item
			There exists a \( G_m \)-invariant submanifold \( S \) containing \( m \) and a \( G \)-equivariant diffeomorphism \( \chi^{\tube}: G \times_{G_m} S \to W \) onto an open, \( G \)-invariant neighborhood \( W \) of \( G \cdot m \) in \( M \) such that \( \chi^{\tube}(\equivClass{e, s}) = s \) for all \( s \in S \).
		\item
			The subset \( S_{(G_m)} \) is a closed submanifold of \( S \).
	\end{thmenumerate}
	The diffeomorphism \( \chi^{\tube} \) is called the tube diffeomorphism.
\end{defn}
If these conditions hold for some \( m \in O \), then they clearly hold for every other point of \( O \), too.

\begin{prop}
	Let \( M \) be a \( G \)-manifold. 
	Assume that there exists a slice \( S \) at the point \( m \in M \).
	Then, the following hold:
	\begin{thmenumerate}
		\item 
			There exists a smooth \( G \)-equivariant retraction\footnote{A retraction of the inclusion \( A \toInject X \) of a subspace \( A \) in a topological space \( X \) is a continuous map \( r: X \to A \) such that the restriction of \( r \) to \( A \) is the identity on \( A \), that is, \( r(a) = a \) for \( a \in A \).} \( r: G \cdot S \to G \cdot m \) of the injection \( G \cdot m \toInject G \cdot S \) such that \( S = r^{-1}(m) \).
		\item
			\label{prop:slice:existenceTube}
			The twisted product \( G \times_{G_m} S \) yields a tubular neighborhood of the orbit \( G \cdot m \).
			That is, there exist a \( G \)-equivariant diffeomorphism \( \chi^\tube: G \times_{G_m} S \to M \) onto the open \( G \)-invariant neighborhood \( G \cdot S \) of \( G \cdot m \) in \( M \).
			\qedhere
	\end{thmenumerate}
\end{prop}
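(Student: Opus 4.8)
The plan is to establish the tube diffeomorphism of \iref{prop:slice:existenceTube} first, and then to obtain the retraction of the first item by collapsing the slice fiber onto \( m \) and transporting this collapse through the tube map. Accordingly, I would define
\[
	\chi^{\tube}: G \times_{G_m} S \to M, \qquad \equivClass{g, s} \mapsto g \cdot s ,
\]
where \( G \times_{G_m} S \) denotes the quotient of \( G \times S \) by the relation \( (g, s) \sim (g h^{-1}, h \cdot s) \) for \( h \in G_m \). This map is well defined, since \( (g h^{-1}) \cdot (h \cdot s) = g \cdot s \), and it is manifestly \( G \)-equivariant for the action \( g_0 \cdot \equivClass{g, s} = \equivClass{g_0 g, s} \). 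Its image is exactly \( G \cdot S \), which is open in \( M \) by the preceding proposition. The only place where the slice axioms enter non-trivially is injectivity: if \( g_1 \cdot s_1 = g_2 \cdot s_2 \), then \( s_2 \in \big( (g_2^{-1} g_1) \cdot S \big) \cap S \), so \iref{i::slice:SliceDefOnlyStabNotMoveSlice} forces \( h \defeq g_2^{-1} g_1 \in G_m \); hence \( s_2 = h \cdot s_1 \) and \( \equivClass{g_1, s_1} = \equivClass{g_2, s_2} \). Thus \( \chi^{\tube} \) is a \( G \)-equivariant bijection onto \( G \cdot S \).

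The main obstacle is upgrading this bijection to a diffeomorphism, that is, producing a smooth inverse; here I would exploit the local product structure \iref{i::slice:SliceDefLocallyProduct}. Because \( G_m \) is principal, \( G \times_{G_m} S \) is the bundle over \( G \slash G_m \) associated with the principal bundle \( G \to G \slash G_m \), and the local section \( \chi: U \to G \) induces a chart
\[
	\Phi: U \times S \to G \times_{G_m} S, \qquad (\equivClass{g}, s) \mapsto \equivClass{\chi(\equivClass{g}), s},
\]
a diffeomorphism onto the open part of \( G \times_{G_m} S \) lying over \( U \). By construction \( \chi^{\tube} \circ \Phi = \chi^S \), which is a diffeomorphism onto the slice neighborhood \( V \); consequently \( \chi^{\tube} = \chi^S \circ \Phi^{-1} \) is a diffeomorphism over \( U \). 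To cover the whole bundle I would translate by the group: for any \( g_0 \in G \), equivariance gives \( \chi^{\tube} = \Upsilon_{g_0} \circ \chi^{\tube} \circ \Upsilon_{g_0^{-1}} \) on the open set \( g_0 \cdot \Phi(U \times S) \), so there \( \chi^{\tube} \) is a diffeomorphism onto \( g_0 \cdot V \). Since \( U \) is a neighborhood of \( \equivClass{e} \) and \( G \) acts transitively on \( G \slash G_m \), the translates \( g_0 \cdot \Phi(U \times S) \) cover \( G \times_{G_m} S \); hence \( \chi^{\tube} \) is a local diffeomorphism everywhere, and being a bijection onto \( G \cdot S \) it is a diffeomorphism. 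This proves \iref{prop:slice:existenceTube}.

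For the retraction I would collapse the fiber onto \( m \) by setting
\[
	\tilde{r}: G \times_{G_m} S \to G \times_{G_m} S, \qquad \equivClass{g, s} \mapsto \equivClass{g, m}.
\]
This is well defined because \( h \cdot m = m \) for \( h \in G_m \) gives \( (g, m) \sim (g h^{-1}, m) \); it is smooth, \( G \)-equivariant, and restricts to the identity on the section \( \set{\equivClass{g, m} \given g \in G} \). Transporting it through the tube diffeomorphism, \( r \defeq \chi^{\tube} \circ \tilde{r} \circ (\chi^{\tube})^{-1} \) is a smooth \( G \)-equivariant map whose image is \( \chi^{\tube}\big(\set{\equivClass{g, m} \given g \in G}\big) = G \cdot m \), a submanifold as the image of a submanifold under a diffeomorphism. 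Since \( (\chi^{\tube})^{-1}(g \cdot m) = \equivClass{g, m} \) is fixed by \( \tilde{r} \), we get \( r(g \cdot m) = g \cdot m \), so \( r \) retracts \( G \cdot S \) onto \( G \cdot m \). Finally, writing \( x = g \cdot s \) one has \( r(x) = g \cdot m \), whence \( r(x) = m \) iff \( g \in G_m \), and then \( x = g \cdot s \in G_m \cdot S \subseteq S \) by \iref{i::slice:SliceDefSliceInvariantUnderStab}; conversely every \( s \in S \) satisfies \( r(s) = \chi^{\tube}(\equivClass{e, m}) = m \). Therefore \( S = r^{-1}(m) \), which completes the first item.
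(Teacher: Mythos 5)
Your proof is correct, and it uses the same two maps as the paper --- the tube map \( \chi^{\tube}(\equivClass{g,s}) = g \cdot s \) and the retraction \( g \cdot s \mapsto g \cdot m \) --- but it reverses the logical order. The paper constructs the retraction \emph{first} and directly: well-definedness from \iref{i::slice:SliceDefOnlyStabNotMoveSlice}, the fiber identity \( r^{-1}(m) = G_m \cdot S = S \) from \iref{i::slice:SliceDefSliceInvariantUnderStab}, and smoothness by observing that in the slice coordinates \( V \isomorph U \times S \) of \iref{i::slice:SliceDefLocallyProduct} the map \( r \) is the projection onto the \( U \)-factor; only afterwards does it build \( \chi^{\tube} \), with essentially the same argument you give (injectivity via \iref{i::slice:SliceDefOnlyStabNotMoveSlice}, local diffeomorphism via the trivialization induced by the local section \( \chi \) and the identification \( \chi^{\tube} \circ \Phi = \chi^S \), globalized by equivariance). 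You instead prove the tube theorem first and then obtain the retraction formally, as \( r = \chi^{\tube} \circ \tilde{r} \circ (\chi^{\tube})^{-1} \) with \( \tilde{r}(\equivClass{g,s}) = \equivClass{g, m} \). This buys you smoothness of \( r \) for free once the tube exists (smoothness of \( \tilde{r} \) is immediate in any local trivialization, where it reads \( (\equivClass{g}, s) \mapsto (\equivClass{g}, m) \)), at the cost of making item (i) depend on the full tube machinery; the paper's order keeps the two items logically independent, which is slightly more informative since the retraction argument alone is the one that survives in weaker settings. One point worth making explicit in your write-up: both your globalization step \( \chi^{\tube} = \Upsilon_{g_0} \circ \chi^{\tube} \circ \Upsilon_{g_0^{-1}} \) and the smoothness of \( \tilde{r} \) use that the \( G \)-translations on \( G \times_{G_m} S \) are diffeomorphisms for the manifold structure derived from the local trivializations of \( G \to G \slash G_m \); this is true (and the paper relies on the same fact implicitly when it invokes equivariance to reduce to a neighborhood of \( m \)), but it deserves a sentence.
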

\begin{proof}
	Since \( S \) is a slice at \( m \in M \), the set \( G \cdot S \) is open in \( M \) according to \cref{prop::slice:SliceDefSliceSweepIsOpen}.
	Moreover, the following map is well-defined:
	\begin{equation}
		r: G \cdot S \to G \cdot m, \qquad g \cdot s \mapsto g \cdot m.
	\end{equation}
	Indeed, if \( g' \cdot s' = g \cdot s \) is another representation, then \( g^{-1} g' \) is necessarily an element of the stabilizer \( G_m \) by \iref{i::slice:SliceDefOnlyStabNotMoveSlice}.
	Hence, $r(g \cdot s) = g \cdot m = g' \cdot m = r(g' \cdot s')$.
	It is clear that \( r \) is a \( G \)-equivariant retraction.
	Due to the invariance of the slice under the stabilizer action \iref{i::slice:SliceDefSliceInvariantUnderStab}, we have \( r^{-1}(m) = G_m \cdot S = S \).
	Hence, it remains to show that \( r \) is a smooth map.
	By \( G \)-equivariance it is sufficient to inspect smoothness near the slice.
	Thus, we can restrict attention to the slice neighborhood \( V \isomorph U \times S \) of \iref{i::slice:SliceDefLocallyProduct}.
	Now, it is not hard to see that in these product coordinates \( r \) corresponds to the projection on the \( U \)-factor and hence it is smooth.

	We next show how to construct a tubular neighborhood starting from a slice.
	By definition, \( S \) is invariant under the stabilizer and, hence, the twisted product \( G \times_{G_m} S \) is well-defined.
	The set \( G \times_{G_m} S \) carries a natural smooth manifold structure derived from the local trivializations of \( G \to G \slash G_m \).
	The desired tube diffeomorphism is given by
	\begin{equation}
		\chi^\tube: G \times_{G_m} S \to G \cdot S, \qquad \equivClass{g, s} \mapsto g \cdot s.
	\end{equation}
	Indeed, \( G \cdot S \) is an open \( G \)-invariant neighborhood of \( G \cdot m \) by \cref{prop::slice:SliceDefSliceSweepIsOpen} and \( \chi^\tube \) is obviously surjective.
	Injectivity is also easily verified, as \( g' \cdot s' = g \cdot s \) directly implies that \( g^{-1} g' \) is an element of the stabilizer \( G_m \) due to \iref{i::slice:SliceDefOnlyStabNotMoveSlice}.
	To complete the proof we are left with showing that \( \chi^\tube \) is a local diffeomorphism.
	By \( G \)-equivariance and \iref{i::slice:SliceDefLocallyProduct}, we can again restrict attention to an open neighborhood \( V \) of \( m \) which is diffeomorphic to \( U \times S \), where \( U \subseteq G/G_m \) is an open neighborhood of the identity coset and the diffeomorphism is induced by a local section \( \chi: G/G_m \supseteq U \to G \).
	In these coordinates and relative to the local trivialization of \( G \times_{G_m} S \) induced by the local section \( \chi \), the map \( \chi^\tube \) is identified with the diffeomorphism \( \chi^S: U \times S \to V \).
\end{proof}
Conversely, we have the following.
\begin{prop}
	Let \( M \) be a \( G \)-manifold.
	If the \( G \)-action has a tubular neigh\-borhood of the \( G \)-orbit \( O \), then there exists a slice at every point of \( O \).
\end{prop}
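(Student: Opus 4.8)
The plan is to take $S$ itself---the $G_m$-invariant submanifold furnished by the tubular neighborhood---as the candidate slice at $m$, and to verify the four defining properties \iref{i::slice:SliceDefSliceInvariantUnderStab}--\iref{i::slice:SliceDefPartialSliceSubmanifold} one at a time, reading most of them straight off the structure of the tube diffeomorphism $\chi^\tube: G \times_{G_m} S \to W$. Two of the conditions come for free: \iref{i::slice:SliceDefSliceInvariantUnderStab} is exactly the $G_m$-invariance of $S$ that is part of the definition of a tubular neighborhood, and \iref{i::slice:SliceDefPartialSliceSubmanifold} is precisely the requirement that $S_{(G_m)}$ be a closed submanifold of $S$, which is the third tube axiom. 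Likewise, $G_m$ is assumed to be a principal Lie subgroup, so the first half of \iref{i::slice:SliceDefLocallyProduct} holds by hypothesis.

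For \iref{i::slice:SliceDefOnlyStabNotMoveSlice} I would exploit the injectivity of $\chi^\tube$ together with the equivalence relation defining the twisted product $G \times_{G_m} S = (G \times S) \slash G_m$, where $G_m$ acts by $h \cdot (g, s) = (g h^{-1}, h \cdot s)$. Suppose $g \cdot s = s'$ for some $s, s' \in S$ and $g \in G$. Since $W$ is $G$-invariant, $g \cdot s = s'$ lies in the range of $\chi^\tube$, and using the normalization $\chi^\tube(\equivClass{e, \cdot}) = \id$ we get $\chi^\tube(\equivClass{g, s}) = g \cdot s = s' = \chi^\tube(\equivClass{e, s'})$. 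Injectivity then forces $\equivClass{g, s} = \equivClass{e, s'}$, and comparing the first component in the twisted-product relation yields $g \in G_m$, which is \iref{i::slice:SliceDefOnlyStabNotMoveSlice}.

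The main work, and the step I expect to be the real obstacle, is the second half of \iref{i::slice:SliceDefLocallyProduct}. Because $G_m$ is principal, $G \to G \slash G_m$ is a principal bundle and admits a local section $\chi: U \to G$ on an open neighborhood $U$ of the identity coset $\equivClass{e}$, which I may normalize so that $\chi(\equivClass{e}) = e$ (and in any case $\chi(\equivClass{e}) \in G_m$ fixes $m$). This section induces a local trivialization $\tau: U \times S \to \pi^{-1}(U) \subseteq G \times_{G_m} S$, $(\equivClass{g}, s) \mapsto \equivClass{\chi(\equivClass{g}), s}$, of the associated bundle $\pi: G \times_{G_m} S \to G \slash G_m$; by the construction of the smooth structure on the twisted product, $\tau$ is a diffeomorphism onto the open set $\pi^{-1}(U)$. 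The key observation is that the candidate map factors as $\chi^S = \chi^\tube \circ \tau$, since $\chi^\tube(\equivClass{\chi(\equivClass{g}), s}) = \chi(\equivClass{g}) \cdot s = \chi^S(\equivClass{g}, s)$. As a composition of two diffeomorphisms onto open sets, $\chi^S$ is a diffeomorphism onto $V \defeq \chi^\tube(\pi^{-1}(U))$, which is open in $M$ and contains $m = \chi^\tube(\equivClass{e, m})$. The care required here is bookkeeping: correctly matching the section-induced trivialization of $G \times_{G_m} S$ with the map $(\equivClass{g}, s) \mapsto \chi(\equivClass{g}) \cdot s$ and tracking the normalization so that $V$ is genuinely a neighborhood of $m$.

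This shows that $S$ is a slice at $m$. Finally, to get a slice at an arbitrary $p \in O$, I would invoke the remark recorded immediately before the statement, namely that the tubular-neighborhood conditions hold at \emph{every} point of $O$ once they hold at one; applying the construction above verbatim at $p$ then produces a slice there. Equivalently, writing $p = g \cdot m$, one checks using the $G$-equivariance of $\chi^\tube$ and the relation $G_{g \cdot m} = g G_m g^{-1}$ that $g \cdot S$ is a slice at $p$.
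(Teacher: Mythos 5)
Your proof is correct and follows essentially the same route as the paper's: you take the tube's submanifold \( S \) as the slice, read \iref{i::slice:SliceDefSliceInvariantUnderStab} and \iref{i::slice:SliceDefPartialSliceSubmanifold} off the tube axioms, deduce \iref{i::slice:SliceDefOnlyStabNotMoveSlice} from injectivity of \( \chi^{\tube} \) via \( \chi^{\tube}(\equivClass{g,s}) = \chi^{\tube}(\equivClass{e, g \cdot s}) \), and obtain \iref{i::slice:SliceDefLocallyProduct} through the factorization \( \chi^S = \chi^{\tube} \circ \tau \) with the section-induced trivialization \( \tau \). The only additions beyond the paper's argument are harmless bookkeeping (the normalization \( \chi(\equivClass{e}) \in G_m \) fixing \( m \)) and the explicit appeal, for arbitrary points of \( O \), to the remark that the tube conditions propagate along the orbit, which the paper uses implicitly.
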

\begin{proof}
	Let \( m \in O \) and let \( \chi^{\tube}: G \times_{G_m} S \to W \) be a tube diffeomorphism.
	We claim that \( S \) is a slice.
	The properties \iref{i::slice:SliceDefSliceInvariantUnderStab} and \iref{i::slice:SliceDefPartialSliceSubmanifold} of a slice hold by assumption.
	In order to verify \iref{i::slice:SliceDefOnlyStabNotMoveSlice}, let \( g \in G \) and \( s \in S \) be such that \( g \cdot s \in S \).
	Since \( \chi^{\tube} \) is \( G \)-equivariant and satisfies \( \chi^{\tube}(\equivClass{e, \tilde{s}}) = \tilde{s} \) for all \( \tilde{s} \in S \), we have
	\begin{equation}
		\chi^{\tube}(\equivClass{g, s}) = g \cdot s = \chi^{\tube}(\equivClass{e, g \cdot s}).
	\end{equation}
	Hence, \( \equivClass{g, s} = \equivClass{e, g \cdot s} \).
	The latter is only possible if \( g \in G_m \); and so \iref{i::slice:SliceDefOnlyStabNotMoveSlice} holds.
	Since \( G_m \) is a principal Lie subgroup of \( G \), the \( G_m \)-bundle \( G \to G \slash G_m \) has a local section \( \chi: G \slash G_m \supset U \to G \) defined in an open neighborhood \( U \) of the identity coset.
	Let \( \tau: U \times S \to \restr{(G \times_{G_m} S)}{U} \) be the induced local trivialization of \( G \times_{G_m} S \) defined by
	\begin{equation}
		\tau(\equivClass{g}, s) \defeq \equivClass{\chi(\equivClass{g}), s}.
	\end{equation}
	By \( G \)-equivariance of \( \chi^{\tube} \), we obtain
	\begin{equation}
		\chi^{\tube} \circ \tau (\equivClass{g}, s) 
			= \chi^\tube(\equivClass{\chi(\equivClass{g}), s})
			= \chi(\equivClass{g}) \cdot \chi^\tube(\equivClass{e, s}) 
			= \chi(\equivClass{g}) \cdot s.
	\end{equation}
	Since \( \chi^\tube \) is a diffeomorphism onto its image the map \( \chi^S \defeq \chi^{\tube} \circ \tau \) is a diffeomorphism onto an open neighborhood of \( m \) in \( M \).
	This verifies \iref{i::slice:SliceDefLocallyProduct}.
\end{proof}
In the finite-dimensional setting, one can construct a slice from a smooth retraction, see \parencite[Theorem~2.3.26]{OrtegaRatiu2003}.
However, to verify the property \iref{i::slice:SliceDefLocallyProduct} the inverse function theorem is needed and thus the construction does not carry over to arbitrary locally convex manifolds.
Hence, in our infinite-dimensional setting, a slice (or a tubular neighborhood) is a stronger concept than a retraction.

A slice \( S \) reduces the description of the action on some neighborhood to the action of the stabilizer group on \( S \).
It is often convenient to further simplify the situation and reduce the investigation to a \emph{linear} action.
\begin{defn}
	\label{def:slice:linearSlice}
	Let \( M \) be a \( G \)-manifold.
	A slice \( S \) at a point \( m \in M \) is called a \emphDef{linear slice} if the following holds:
	\begin{enumerate}[label=(SL\arabic*), ref=(SL\arabic*), start=5, leftmargin=*]
		\item 
			\label{i:slice:linearSlice}
			There exist a continuous representation of \( G_m \) on a locally convex vector space \( X \) and a \( G_m \)-equivariant diffeomorphism \( \iota_S \) from a \( G_m \)-invariant open neighborhood of \( 0 \) in \( X \) onto \( S \) such that \( \iota_S(0) = m \).
			\qedhere
	\end{enumerate}
\end{defn}
Note that \( \tangent_0 \iota_S: X \to \TBundle_m S \) identifies \( X \) with the tangent space to the slice at \( m \).
We usually suppress the diffeomorphism \( \iota_S \) and view \( S \) as a submanifold of \( M \) or as an open subset of \( X \) depending on the context.

In finite dimensions, a classical result by \textcite[Theorem~1]{Bochner1945} shows that every action of a compact Lie group can be linearized near a fixed point.
Hence, every slice of a proper action is linear (after shrinking it).
However, the proof of Bochner's linearization theorem relies on the classical inverse function theorem and so it does not directly generalize to the infinite-dimensional setting.
Nonetheless, the forthcoming slice \cref{prop:slice:sliceTheoremGeneral} shows that the resulting slice is linear, indeed.

\subsection{Proper actions}
General group actions may exhibit pathological and unwanted features.
For example, the orbit space might be non-Hausdorff.
In the finite-dimensional setting, it is well known that proper actions behave significantly better and, at the same time, the class of proper actions is wide enough to include interesting examples.
These pleasant properties continue to hold in the infinite-dimensional context, as we will discuss now.

Recall that a continuous map is called proper if the inverse image of every compact subset is compact.
A group action \( \Upsilon: G \times M \to M \) is said to be \emphDef{proper} if the map
\begin{equation}
	\Upsilon_\ext: G \times M \to M \times M, \qquad (g,m) \mapsto (g \cdot m, m) 	
\end{equation}
is a proper map.
Checking properness directly, based on this definition, can be intricate. 
The following gives more practical characterizations.
\begin{prop}
	\label{prop:lieGroupAction:properEquivalent}
	Let \( (M, G, \Upsilon) \) be a left Lie group action.
	The statements (i), (ii) and (iii) below are equivalent to properness of $\Upsilon$ for adequate conditions on the spaces involved:
	\\[1.5ex]
	\noindent
	If $M$ is a compactly generated Hausdorff space:
	\begin{thmenumerate}[series=properEquivalent]
		\item
			\label{prop:lieGroupAction:proper:closedOribtMapCompactStabilizer}
			$\Upsilon_\ext$ is a closed map and every stabilizer subgroup is compact. 
	\end{thmenumerate}
	If $G$ and $M$ are metric spaces:
	\begin{thmenumerate}[resume=properEquivalent]
		\item
			\label{prop:lieGroupAction:proper:sequenceMetric}
			Let $(g_i)$ and $(m_i)$ be sequences in $G$ and $M$, respectively.
			If $(m_i)$ and $(g_i \cdot m_i)$ converge, then the sequence $(g_i)$ has a convergent subsequence.
	\end{thmenumerate}
	If $G$ has a complete left-invariant metric $d_G$ compatible with the Lie group topology and $M$ admits a $G$-invariant metric $d_M$ compatible with the manifold topology:
	\begin{thmenumerate}[resume=properEquivalent]
		\item
			\label{prop:lieGroupAction:proper:sequenceMetricInvariant}
			Let \( m \in M \).
			Every sequence $(g_i)$ in $G$ for which $(g_i \cdot m)$ converges to $m$ has a convergent subsequence.\footnote{In this case, the subsequence necessarily converges to an element of the stabilizer $G_m$.}
			\qedhere
	\end{thmenumerate}	
\end{prop}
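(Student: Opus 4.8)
The plan is to treat the three equivalences as three separate statements sharing one skeleton: in each case properness of \( \Upsilon_\ext \) (``the preimage of every compact set is compact'') must be translated into the ambient language—sequential for (ii) and (iii), closed-map for (i). I would first record two purely topological facts about a continuous map \( f \colon X \to Y \) and then specialize them to \( f = \Upsilon_\ext \), exploiting that the nonempty fibres of \( \Upsilon_\ext \) are homeomorphic to stabilizer subgroups: indeed \( \Upsilon_\ext^{-1}(p,m) = \set{(g,m) \given g \cdot m = p} \), which for \( p = g_0 \cdot m \) equals \( g_0 G_m \times \{m\} \) and is carried onto \( G_m \) by the left translation \( g \mapsto g_0^{-1} g \). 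Hence ``compact fibres'' and ``compact stabilizers'' are interchangeable throughout.

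For (ii) I would invoke the standard sequential characterization of proper maps between metric spaces: \( f \) is proper if and only if every sequence \( (x_i) \) with \( (f(x_i)) \) convergent admits a convergent subsequence. The forward direction uses that \( \{f(x_i)\} \cup \{\lim f(x_i)\} \) is compact, so that \( (x_i) \) lies in its compact preimage; the backward direction extracts, from a sequence in \( f^{-1}(K) \), a subsequence whose image converges in \( K \) and then a convergent sub-subsequence, yielding sequential compactness, which equals compactness in metric spaces. Specializing to \( \Upsilon_\ext \colon G \times M \to M \times M \): a sequence \( (g_i,m_i) \) has convergent image exactly when \( (m_i) \) and \( (g_i \cdot m_i) \) converge, and since \( (m_i) \) already converges, the only ingredient missing for convergence of \( (g_i,m_i) \) is a convergent subsequence of \( (g_i) \). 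This is precisely statement (ii). Statement (iii) is then the special case of (ii) obtained by taking the constant sequence \( m_i = m \) with common limit \( m \), so (ii) \( \Rightarrow \) (iii) is immediate; the content lies in the converse.

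For (i), with \( M \) compactly generated Hausdorff, so that the relevant closedness criterion on \( M \times M \) is available, I would use the Bourbaki equivalence ``proper \( \Leftrightarrow \) closed with compact fibres.'' The implication closed \( + \) compact fibres \( \Rightarrow \) proper is the classical tube-lemma argument: cover a compact fibre \( f^{-1}(y) \) by finitely many members of a given open cover, form their union \( U_y \), and use closedness of \( f \) to produce an open \( V_y \ni y \) with \( f^{-1}(V_y) \subseteq U_y \); finitely many \( V_y \) cover \( K \), giving a finite subcover of \( f^{-1}(K) \). Conversely, properness gives compact fibres outright, and closedness follows because for closed \( C \) and compact \( K \) one has \( f(C) \cap K = f\big(C \cap f^{-1}(K)\big) \), which is compact, hence closed in the Hausdorff \( K \); as \( M \times M \) is a \( k \)-space, this forces \( f(C) \) to be closed. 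Together with the fibre–stabilizer identification, this is exactly statement (i).

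The main obstacle is the converse (iii) \( \Rightarrow \) (ii), where the single-point recurrence hypothesis must be propagated to arbitrary convergent data using completeness of \( d_G \) and invariance of both metrics. First, \( G \)-invariance of \( d_M \) lets me replace \( (m_i) \) by the constant \( q = \lim m_i \), since \( d_M(g_i \cdot q, g_i \cdot m_i) = d_M(q, m_i) \to 0 \) gives \( g_i \cdot q \to p \); the same invariance yields \( g_i^{-1} \cdot p \to q \) and, for the ratios, \( g_i^{-1} g_j \cdot q \to q \) as \( i,j \to \infty \). The difficulty is that (iii) only controls sequences returning to \( q \), whereas \( p \) may differ from \( q \) and need not a priori lie on the orbit. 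I would resolve this by first noting that (iii) applied to sequences inside \( G_q \) makes \( G_q \) compact (precompact by (iii) and closed in the complete group \( G \)), then passing to the quotient \( G \slash G_q \) with its induced left-invariant metric, which is complete because \( G \) is complete and \( G_q \) is compact. On this quotient the orbit map \( \equivClass{g} \mapsto g \cdot q \) is a continuous bijection onto the orbit whose inverse is continuous precisely by (iii); the ratio estimate then shows that \( (g_i G_q) \) possesses a Cauchy subsequence, which by completeness converges, placing \( p \) on the orbit and, after lifting through the compact fibre \( G_q \), furnishing the desired convergent subsequence of \( (g_i) \). Checking that the orbit map is a homeomorphism onto the orbit and that the Cauchy property transfers cleanly is the delicate point, and it is exactly here that completeness and invariance are indispensable.
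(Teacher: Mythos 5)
Your treatment of (i) and (ii) is correct and is precisely the ``routine exercise in topology'' that the paper invokes without spelling out: the fibre identification \( \Upsilon_\ext^{-1}(p,m) = g_0 G_m \times \{m\} \), the tube-lemma direction for (i), the \( k \)-space argument for ``proper implies closed'', and the sequential characterization of proper maps between metric spaces for (ii) are all standard and correctly deployed. (One small caveat on (i), which the paper's formulation shares: the direction ``proper implies closed'' needs the \emph{codomain} \( M \times M \), not \( M \), to be compactly generated; this is automatic if \( M \) is, say, metrizable or locally compact.) For (iii) the paper gives no argument at all but cites \parencite[p.~42]{Subramaniam1984}, and your skeleton --- reduce to \( g_i \cdot q \to p \) by \( G \)-invariance of \( d_M \), prove \( G_q \) compact from (iii), derive the ratio estimate \( g_i^{-1}g_j \cdot q \to q \), and finish with left invariance and completeness of \( d_G \) --- is exactly that classical argument.

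The one genuine gap is the step you yourself flag as delicate: the assertion that \( d_G \) induces a left-invariant \emph{complete metric} on \( G \slash G_q \). For a metric that is only left-invariant this is unjustified. The natural candidate \( \bar d(gG_q, kG_q) = \inf\{ d_G(gh, kh') : h, h' \in G_q \} \) is invariant under left translations, and by that very invariance it equals the distance from \( e \) to the double coset \( G_q\, g^{-1}k\, G_q \); distances to double cosets are not subadditive in general, so the triangle inequality can fail, while the one-sided infimum \( \inf_{h} d_G(g, kh) \) need not even be symmetric. Moreover, even granting a metric on the quotient, ``complete because \( G \) is complete and \( G_q \) is compact'' together with ``lifting through the compact fibre'' is exactly the substantive content of the cited proof, not something a generic completeness lemma supplies. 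The repair is to drop the quotient and argue directly in \( G \). From (iii) one first gets: for every \( \delta > 0 \) there is \( N \) with \( \inf_{h \in G_q} d_G(g_i^{-1}g_j, h) < \delta \) for all \( i, j \geq N \) --- otherwise there are \( i_k, j_k \to \infty \) with \( g_{i_k}^{-1}g_{j_k} \cdot q \to q \) but \( d_G(g_{i_k}^{-1}g_{j_k}, G_q) \geq \delta \), and (iii) plus continuity of the action produces a subsequential limit lying in \( G_q \), a contradiction. Left invariance turns this into \( d_G(g_j, g_i h) < \delta \) for some \( h \in G_q \). Covering the compact \( G_q \) by finitely many \( \delta \)-balls around \( h_1, \dots, h_S \) and using \( d_G(g_i h, g_i h_s) = d_G(h, h_s) \), every \( g_j \) with \( j \geq N \) lies within \( 2\delta \) of one of the finitely many points \( g_i h_1, \dots, g_i h_S \); a pigeonhole over \( \delta = 2^{-k} \) with diagonalization then extracts a \( d_G \)-Cauchy subsequence of \( (g_i) \), and completeness of \( d_G \) yields convergence, whence \( p = \lim g_{i_k} \cdot q \) lies on the orbit. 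With this substitution your proof is complete and coincides with the argument the paper defers to.
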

\begin{proof}
	The proof of the first two characterizations is a routine exercise in topology and the proof of the third one can be found in \parencite[p.~42]{Subramaniam1984}.	
\end{proof}
As one would expect, properness of the group action has pleasant consequences for the topological properties of orbits and orbit spaces.
\begin{prop}\label{prop:lieGroupProperAction:properties}
	Let \( (M, G, \Upsilon) \) be a proper Lie group action.
	Assume that the topology of \( M \) is compactly generated.
	Then, the following holds for every point \( m \in M \):
	\begin{thmenumerate}
		\item
			The orbit map \( \Upsilon_m: G \to M \) is proper and the orbit \( G \cdot m \) is closed in \( M \).
			Moreover, \( \Upsilon_m \) descends to a map \( \check\Upsilon_m: G \slash G_m \to M \), which is a homeomorphism onto \( G \cdot m \).
		\item \label{prop:properAction:stabilizerCompact}
			The stabilizer subgroup \( G_m \) is compact.
		\item \label{prop:properAction:orbitSpaceHausdorff}
			\( G \slash G_m \) and \( M / G \), both endowed with the quotient topology, are Hausdorff.
			\qedhere
	\end{thmenumerate}
\end{prop}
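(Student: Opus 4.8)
The plan is to deduce all three statements from the definition of properness together with the first characterization in \cref{prop:lieGroupAction:properEquivalent}, which tells us that—because $M$ is compactly generated and Hausdorff—properness of $\Upsilon$ is equivalent to $\Upsilon_\ext$ being a closed map together with compactness of all stabilizers. Thus from properness I may freely use that $\Upsilon_\ext$ is closed and that each $G_m$ is compact; this is precisely the compactness of stabilizers asserted there, so the second claim is immediate. (Alternatively, $G_m \times \set{m} = \Upsilon_\ext^{-1}(\set{(m,m)})$ is the preimage of a compact set under the proper map $\Upsilon_\ext$ and hence compact, whence $G_m$ is compact.)

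For the first claim I would first show that the orbit map $\Upsilon_m$ is proper. For compact $K \subseteq M$ one has $\Upsilon_\ext^{-1}(K \times \set{m}) = \Upsilon_m^{-1}(K) \times \set{m}$, and since $K \times \set{m}$ is compact, properness of $\Upsilon_\ext$ forces $\Upsilon_m^{-1}(K)$ to be compact. Next, $G \times \set{m}$ is closed in $G \times M$ because $M$ is Hausdorff, so its image $\Upsilon_\ext(G \times \set{m}) = (G \cdot m) \times \set{m}$ is closed in $M \times M$; the projection $M \times \set{m} \to M$ is a homeomorphism carrying this closed set to $G \cdot m$, so the orbit is closed in $M$. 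Finally, the orbit map factors as $\Upsilon_m = \check\Upsilon_m \circ \pi$ through the quotient map $\pi\colon G \to G \slash G_m$; by the universal property of the quotient topology $\check\Upsilon_m$ is continuous, and it is a bijection onto $G \cdot m$ since $g \cdot m = g' \cdot m$ is equivalent to $\equivClass{g} = \equivClass{g'}$. To upgrade this continuous bijection to a homeomorphism I would verify that $\check\Upsilon_m$ is closed: for closed $A \subseteq G \slash G_m$ the set $\pi^{-1}(A) \times \set{m}$ is closed in $G \times M$, hence $\Upsilon_\ext(\pi^{-1}(A) \times \set{m}) = \check\Upsilon_m(A) \times \set{m}$ is closed in $M \times M$, and therefore $\check\Upsilon_m(A)$ is closed in $G \cdot m$. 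A continuous closed bijection is a homeomorphism, which finishes the first claim.

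For the third claim, $G \slash G_m$ is Hausdorff because $G_m$ is a compact, and therefore closed, subgroup of the Hausdorff topological group $G$, and quotients of topological groups by closed subgroups are Hausdorff; alternatively this follows at once from the homeomorphism $G \slash G_m \isomorph G \cdot m \subseteq M$ just established. For $M \slash G$ I would invoke the standard criterion that a quotient by an open equivalence relation with closed graph is Hausdorff. The quotient map $M \to M \slash G$ is open, since $q^{-1}(q(U)) = \bigcup_{g \in G} g \cdot U$ is a union of open sets for every open $U$. Moreover, the orbit relation coincides with $\Upsilon_\ext(G \times M)$, the image of the closed set $G \times M$ under the closed map $\Upsilon_\ext$, hence it is closed in $M \times M$. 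The open-quotient/closed-graph criterion then yields Hausdorffness of $M \slash G$.

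The main obstacle is conceptual rather than computational: infinite-dimensional manifolds are not locally compact, so the familiar shortcut ``a proper map into a locally compact Hausdorff space is closed'' is unavailable. The whole argument must instead be routed through the closedness of $\Upsilon_\ext$ supplied by \cref{prop:lieGroupAction:properEquivalent}, and one must check that every set whose image or preimage is invoked is genuinely closed or compact in $G \times M$, respectively $M \times M$, without appealing to local compactness of the factors. Once $\Upsilon_\ext$ is known to be closed, each of the three statements reduces to an elementary point-set argument.
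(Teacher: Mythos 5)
Your proof is correct, and its skeleton matches the paper's, but you route the closedness arguments through a different source. The paper proves claim (i) by observing that $\Upsilon_m$ is proper (via the relation $\Upsilon_\ext(\cdot, m) = \Upsilon_m(\cdot) \times \set{m}$, exactly as you do) and then invoking that a proper map into the compactly generated Hausdorff space $M$ is automatically closed; closedness of the orbit and the homeomorphism property of $\check\Upsilon_m$ then follow in one stroke, since a continuous closed injection is a homeomorphism onto its image. You instead take as your single input the characterization in \cref{prop:lieGroupAction:properEquivalent} — closedness of $\Upsilon_\ext$ together with compactness of all stabilizers — and derive every closedness statement by pushing closed sets of the form $A \times \set{m}$ through $\Upsilon_\ext$ and projecting off the second factor; this is valid, including the step $\Upsilon_\ext\bigl(\pi^{-1}(A) \times \set{m}\bigr) = \check\Upsilon_m(A) \times \set{m}$ which makes $\check\Upsilon_m$ a closed map. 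For claim (iii) the two arguments coincide for $M \slash G$ (open quotient map, and $R_\pi = \Upsilon_\ext(G \times M)$ closed as the image of a closed set under the closed map $\Upsilon_\ext$ — note the paper itself needs closedness of $\Upsilon_\ext$ here, so your reliance on it is no extra cost); for $G \slash G_m$ the paper verifies directly that $R_{\pi_{G_m}}$ is the preimage of the closed set $G_m$ under $(g,h) \mapsto g^{-1}h$, whereas you cite the standard fact that the quotient of a Hausdorff topological group by a closed subgroup is Hausdorff (or the already-established homeomorphism onto $G \cdot m$) — the same content in different packaging. What your route buys is that the compactly-generated hypothesis enters exactly once, through the cited equivalence, and you correctly flag that the usual shortcut via local compactness is unavailable; the paper's route is marginally shorter because the auxiliary fact that proper maps into compactly generated Hausdorff spaces are closed delivers closedness of $\Upsilon_m$, of the orbit, and of $\check\Upsilon_m$ simultaneously.
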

\begin{proof}
	\begin{thmenumerate}*
		\item
			Properness of \( \Upsilon_m \) is a direct consequence of the relation \( \Upsilon_\ext(\cdot, m) = \Upsilon_m(\cdot) \times \set{m} \).
			Clearly, \( \Upsilon_m \) descends to an injective map \( \check\Upsilon_m: G \slash G_m \to M \).
			Being proper, \( \Upsilon_m \) is a closed map and hence the image \( \Upsilon_m(G) = G \cdot m \) is a closed subset of \( M \). 
			The map \( \check\Upsilon_m \) is a homeomorphism onto its image, because it is a continuous and closed injection. 
		\item
			As the preimage of \( \set{m} \) under the proper map \( \Upsilon_m: G \to M \), the stabilizer \( G_m \) is compact.
		\item
			For the Hausdorff property of the quotient spaces, recall that the codomain \( Y \) of a surjective, continuous, open map \( f:X \to Y \) is Hausdorff if and only if \( R_f = \set{(x_1, x_2) \in X \times X \given f(x_1) = f(x_2)} \) is closed in \( X \times X \), \eg \parencite[Lemma 3.2]{Dieck1987}.
			In the present case, the maps 
			\begin{equation}
				\pi_{G_m}: G \to G/G_m \quad \text{and} \quad \pi: M \to M / G
			\end{equation}
			are quotient maps with respect to continuous group actions and so they are surjective, continuous and open.
			Furthermore, 
			\begin{align}
				R_{\pi_{G_m}} &= \set{(g,h) \in G \times G \given g G_m = h G_m} \\
				\intertext{is closed as the inverse image of the closed subset \( G_m \) under the continuous map \( G \times G \ni (g,h) \mapsto g^{-1}h \in G \). Similarly,}
				R_{\pi} &= \set{(m, \tilde{m}) \in M \times M \given G \cdot m = G \cdot \tilde{m}}
			\end{align}
			is closed, because it is the image \( \Upsilon_\ext(G,M) \) of the closed map \( \Upsilon_\ext \).
			\qedhere
	\end{thmenumerate}
\end{proof}

It is still an open question, even for actions of Banach Lie groups, whether stabilizers are always Lie subgroups, see \parencite[Problem IX.3.b]{Neeb2006}.
The situation is better for proper actions of locally exponential\footnote{A Lie group is called locally exponential if the exponential map exists and is a local diffeomorphism.} Lie groups as the following result shows.
\begin{lemma}
	\label{prop:properAction:stabilizerLieSubgroup}
	Let \( G \) be a locally exponential Lie group that acts properly on a manifold \( M \).
	Then, every stabilizer subgroup is a finite-dimensional principal Lie subgroup of \( G \). 
\end{lemma}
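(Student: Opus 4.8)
The plan is to deduce the assertion from the properness of the action together with the structure theory of locally compact subgroups of locally exponential Lie groups. First I would record that the compactness of the stabilizer is a direct consequence of properness and, in contrast to \cref{prop:lieGroupProperAction:properties}, requires no compact-generation hypothesis: since the singleton \( \set{(m,m)} \subseteq M \times M \) is compact and \( \Upsilon_\ext \) is proper, the preimage \( \Upsilon_\ext^{-1}(\set{(m,m)}) = G_m \times \set{m} \) is compact, whence \( G_m \) is a compact, and in particular locally compact and closed, subgroup of \( G \).

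Next I would equip \( G_m \) with a Lie group structure. The candidate Lie algebra is \( \LieA{g}_m \defeq \set{X \in \LieA{g} \given \exp(t X) \in G_m \text{ for all real } t} \), and, using the Trotter and commutator product formulas available in a locally exponential Lie group, one checks that \( \LieA{g}_m \) is a closed Lie subalgebra of \( \LieA{g} \). The crucial point is to show that \( G_m \) coincides with \( \exp(\LieA{g}_m) \) near the identity, so that \( G_m \) becomes an embedded locally exponential Lie subgroup. This is precisely the content of the structure theory of locally compact subgroups of locally exponential Lie groups, the Banach version of which is \parencite[Theorem~IV.3.16 and Remark~IV.4.13b]{Neeb2006}. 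I expect this to be the main obstacle: the classical von Neumann--Cartan extraction argument, which represents elements of \( G_m \) close to \( e \) as \( \exp(X_i) \) with \( X_i \to 0 \) and passes to a limit of suitably rescaled \( X_i \), relies on a local compactness that fails in an infinite-dimensional model space but is restored precisely by the compactness of \( G_m \).

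Granting this Lie subgroup structure, which realizes \( G_m \) as an embedded submanifold, its manifold topology coincides with the subspace topology inherited from \( G \), so that \( G_m \) is a compact manifold. Since a compact Hausdorff space is locally compact, the locally convex model space of \( G_m \) is locally compact and therefore finite-dimensional by the theorem of Riesz. Consequently \( \LieA{g}_m \) is finite-dimensional.

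It remains to show that \( G \to G \slash G_m \) is a principal bundle. Being finite-dimensional, \( \LieA{g}_m \) is complemented in \( \LieA{g} \): the Hahn--Banach theorem provides a continuous projection onto it and hence a closed complement \( \LieA{q} \). Restricting \( \exp \) to \( \LieA{q} \) and exploiting that \( \exp \) is a local diffeomorphism near \( 0 \), I would construct a smooth local section of \( G \to G \slash G_m \) on a neighborhood of the identity coset; left translation together with smoothness of the group multiplication then yields local trivializations over a covering of \( G \slash G_m \). This makes \( G \to G \slash G_m \) a principal \( G_m \)-bundle, so that \( G_m \) is a finite-dimensional principal Lie subgroup, as claimed.
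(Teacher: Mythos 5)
Your overall architecture coincides with the paper's: compactness of \( G_m \) from properness, then the structure theory of compact subgroups of locally exponential Lie groups, then principality of \( G \to G \slash G_m \). The first two steps are sound. Your direct computation \( \Upsilon_\ext^{-1}\bigl(\set{(m,m)}\bigr) = G_m \times \set{m} \) is correct and even dispenses with the compactly generated hypothesis under which the paper derives compactness via \cref{prop:properAction:stabilizerCompact}. For the Lie subgroup structure, the precise result you need is exactly what the paper cites, namely \parencite[Theorem~7.3.14]{GloecknerNeeb2013}, which is the locally exponential generalization of the Banach theorem \parencite[Theorem~IV.3.16]{Neeb2006} you quote; granting that citation the step stands, although your heuristic justification is off target: the von Neumann--Cartan extraction rescales vectors in \( \LieA{g} \), where compactness of \( G_m \) restores nothing (bounded sets in an infinite-dimensional locally convex space remain non-compact, and there is not even a norm to rescale by). The actual route is through the no-small-subgroups property of locally exponential groups together with the locally compact (here compact, so Peter--Weyl applies) structure theory of \( G_m \).

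The genuine gap is in your final step. In a general locally convex --- even Fréchet --- Lie group there is no inverse function theorem, so from the facts that \( \exp \) is a local diffeomorphism at \( 0 \) and that \( \LieA{g} = \LieA{q} \oplus \LieA{g}_m \) you cannot conclude that the map \( (X, h) \mapsto \exp(X) h \) on \( \LieA{q} \times G_m \) is a local diffeomorphism at \( (0, e) \), which is what ``restricting \( \exp \) to \( \LieA{q} \) \ldots{} yields a smooth local section'' tacitly requires; the derivative being the isomorphism \( (X, Y) \mapsto X + Y \) is not enough beyond the Banach setting. You also presuppose a smooth manifold structure on \( G \slash G_m \) before a smooth section can even be discussed, and that structure itself needs proof. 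This is precisely where the paper invokes \parencite[Theorem~G]{Gloeckner2015}: because \( G_m \) is finite-dimensional, Glöckner's results --- which rest on an inverse function theorem whose applicability is rescued by finite-dimensionality of the relevant space, in the spirit of \parencite{Gloeckner2006a} --- show that \( G \slash G_m \) is a smooth manifold and \( G \to G \slash G_m \) a submersion, hence a principal bundle. Your outline could be repaired by citing that theorem at this point, but as written the section construction would fail; indeed, the unavailability of the inverse function theorem outside the Banach realm is one of the central obstructions the whole paper is organized around.
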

\begin{proof}
	Let \( m \in M \).
	By \cref{prop:properAction:stabilizerCompact}, the stabilizer \( G_m \) is compact.
	Thus, \parencite[Theorem~7.3.14]{GloecknerNeeb2013} shows that \( G_m \) is a finite-dimensional Lie subgroup of \( G \).
	Due to the finite dimensionality of \( G_m \), by \parencite[Theorem~G]{Gloeckner2015}, the quotient \( G \slash G_m \) has a smooth manifold structure such that the canonical projection \( G \to G \slash G_m \) is a submersion.
	Hence, \( G_m \) is a principal Lie subgroup.
\end{proof}
For later purposes, let us record a helpful property of compact subgroups.
\begin{lemma} \label{prop::compactLieSubgroup:conjugatedSubgroupEqual}
	Let \( G \) be a Lie group. 
	Let \( H \) and \( K \) be two compact Lie subgroups of \( G \). 
	If \( K \) is conjugate to \( H \) and \( K \subseteq H \), then \( K = H \).
\end{lemma}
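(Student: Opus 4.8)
The plan is to show that conjugacy forces $K$ and $H$ to share the two discrete invariants which, together with the inclusion $K \subseteq H$, pin down equality: their dimension and their number of connected components.

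First I would record that conjugation by an element $a \in G$ realizing the conjugacy, say $a H a^{-1} = K$, is a Lie group isomorphism of $H$ onto $K$, being the restriction of the diffeomorphism $g \mapsto a g a^{-1}$ of $G$. Consequently $K$ and $H$ are isomorphic as Lie groups. Since compact Lie groups are finite-dimensional (cf.\ the proof of \cref{prop:properAction:stabilizerLieSubgroup}), the common value $\dim K = \dim H =: d$ is finite, and, again by compactness, each of $H$ and $K$ has only finitely many connected components; the isomorphism shows that these two component counts agree.

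The key step is to prove that $K$ is open in $H$. The inclusion $K \toInject H$ is a morphism of finite-dimensional Lie groups whose differential at the identity is the inclusion $\TBundle_e K \toInject \TBundle_e H$ of tangent spaces. As $\dim K = \dim H = d < \infty$, this differential is a linear isomorphism, so by the (finite-dimensional) inverse function theorem the inclusion is a local diffeomorphism at $e$; hence $K$ contains an open neighborhood of $e$ in $H$ and is therefore open. An open subgroup is automatically closed, so $K$ is a union of connected components of $H$. In particular the identity component $K_0$ coincides with $H_0$, since $H_0 \subseteq K$ (connectedness) and $K_0 \subseteq H_0$ (the identity component of $K$ is connected in $H$).

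Finally I would count components. Because $K_0 = H_0$, the number of components of $K$ equals the index $[K : H_0]$, while that of $H$ equals $[H : H_0]$; by the preceding paragraph these finite indices are equal. Since $K \subseteq H$, the $H_0$-cosets constituting $K$ form a subset of those constituting $H$, and equality of their (finite) numbers forces $K = H$. The only genuine obstacle is the openness step, which is precisely where finite-dimensionality of the compact subgroups—and hence the applicability of the inverse function theorem—is indispensable; the remainder is bookkeeping with connected components.
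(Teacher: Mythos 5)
Your proof is correct and takes essentially the same route as the paper's: conjugation forces \( \dim K = \dim H \) (finite by compactness), so \( K \) is open and closed in \( H \) and hence a union of its finitely many connected components, and the conjugation isomorphism equates the component counts, forcing \( K = H \). You merely spell out two steps the paper leaves implicit, namely the inverse-function-theorem argument behind openness and the final coset bookkeeping.
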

\begin{proof}
	The proof for finite-dimensional \( G \) carries over word by word to infinite dimensions, compare \parencite[Lemma~4.2.9]{Pflaum2001a} or \parencite[Lemma~2.1.14]{OrtegaRatiu2003}.
	Indeed, being compact, the Lie subgroups \( H \) and \( K \) are finite-dimensional.
	Moreover, by assumption, there exists \( g \in G \) such that \( g K g^{-1} = H \).
	Since conjugation by \( g \) is a diffeomorphism of \( G \), the dimensions of \( H \) and \( K \) have to be equal.
	Thus, \( K \) is open and closed in \( H \) and, therefore, it consists of connected components of \( H \).
	By compactness, both groups only have finitely-many connected components.
	Hence it is enough to show that the number of connected components coincide.
	But this is, of course, a direct consequence of the relation \( g K g^{-1} = H \).
\end{proof}

Properness of the action has pleasant consequences for the behavior of the orbit type subsets, too. 
\begin{prop}
	Let \( G \) act properly on \( M \).
	Assume that every stabilizer subgroup is a Lie subgroup of \( G \).
	Then, the following statements hold:
	\begin{thmenumerate}
		\item \label{prop::properAction:orbitTypeIsPartialOrder}
			The preorder \( \leq \) of orbit types is in fact a partial order\footnotemark.
			\footnotetext{A preorder is a reflexive and transitive binary relation. An antisymmetric preorder is called a partial order.}
		\item \label{prop:properAction:relationBetweenStabSubsets}
			For all stabilizer subgroups \( H \subseteq G \), we have:
			\begin{subequations}\label{eq:properAction:relationBetweenStabSubsets}\begin{align}
				M_{\leq (H)} \intersect M_{\geq (H)} &= M_{(H)} \, , \\
				M_{\geq H} \intersect M_{\leq (H)} &= M_H = M_{\geq H} \intersect M_{(H)} \, , \\
				M_{\leq H} \intersect M_{\geq (H)} &= M_H = M_{\leq H} \intersect M_{(H)} \, .
			\end{align}\end{subequations}
		\item \label{prop:properAction:sliceOrbitTypeOpenInSupOrbitType}
			If, in addition, the action admits a slice at every point, then \( M_{(H)} \) is open in \( M_{\geq (H)} \)  and \( M_H \) is open in \( M_{\geq H} \).
		\item \label{prop:properAction:sliceOrbitTypeEqualsSliceStab}
			We have \( S_{(G_m)} = S_{G_m} \) for every slice \( S \) at \( m \in M \).
			In particular, for a linear slice \( S \), the property~\iref{i::slice:SliceDefPartialSliceSubmanifold} holds automatically.
			\qedhere
	\end{thmenumerate}
\end{prop}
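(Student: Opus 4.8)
The plan is to derive all four statements from the single group-theoretic fact recorded in \cref{prop::compactLieSubgroup:conjugatedSubgroupEqual}, using that properness makes every stabilizer a compact (hence finite-dimensional) Lie subgroup by \cref{prop:properAction:stabilizerCompact}. To prove antisymmetry of \( \leq \), I would start from \( (H) \leq (\tilde H) \) and \( (\tilde H) \leq (H) \), that is, \( a H a^{-1} \subseteq \tilde H \) and \( b \tilde H b^{-1} \subseteq H \) for suitable \( a, b \in G \). Conjugating the first inclusion by \( b \) and chaining with the second gives \( (ba) H (ba)^{-1} \subseteq H \). Since \( (ba) H (ba)^{-1} \) is compact and conjugate to \( H \), \cref{prop::compactLieSubgroup:conjugatedSubgroupEqual} forces \( (ba) H (ba)^{-1} = H \), equivalently \( a H a^{-1} = b^{-1} H b \); combined with \( a H a^{-1} \subseteq \tilde H \) and \( \tilde H \subseteq b^{-1} H b \) this yields \( a H a^{-1} = \tilde H \), so \( (H) = (\tilde H) \).

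For the set identities, the first one is immediate from antisymmetry, as \( m \in M_{\leq (H)} \intersect M_{\geq (H)} \) exactly when \( (G_m) \leq (H) \leq (G_m) \). For the remaining two pairs I would record the elementary inclusions \( M_{\geq H} \subseteq M_{\geq (H)} \) and \( M_{\leq H} \subseteq M_{\leq (H)} \); together with the first identity they collapse each pair to the single assertions \( M_{\geq H} \intersect M_{(H)} = M_H \) and \( M_{\leq H} \intersect M_{(H)} = M_H \). Both are a direct application of \cref{prop::compactLieSubgroup:conjugatedSubgroupEqual}: if \( G_m \) is conjugate to \( H \) and in addition \( H \subseteq G_m \) (respectively \( G_m \subseteq H \)), then \( G_m = H \).

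For the openness statement I would fix \( m \in M_{(H)} \) and invoke \cref{prop::slice:sliceNeighbourhoodHasSubconjugatedStabilizer} to obtain a slice neighborhood \( V \) with \( V \subseteq M_{\leq (G_m)} = M_{\leq (H)} \). Any \( v \in V \intersect M_{\geq (H)} \) then satisfies \( (G_v) \leq (H) \leq (G_v) \), so \( v \in M_{(H)} \) by antisymmetry; hence \( V \intersect M_{\geq (H)} \subseteq M_{(H)} \), which exhibits \( M_{(H)} \) as open in \( M_{\geq (H)} \). The claim for \( M_H \) is analogous: for \( m \in M_H \) the same \( V \) lies in \( M_{\leq (H)} \), and for \( v \in V \intersect M_{\geq H} \) the identity \( M_{\geq H} \intersect M_{\leq (H)} = M_H \) from the previous item gives \( v \in M_H \).

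Finally, for the partial slice the inclusion \( S_{G_m} \subseteq S_{(G_m)} \) is trivial, while every \( s \in S \) satisfies \( G_s \subseteq G_m \) by \cref{prop::slice:mHasMaximalStabilizerOfWholeSlice}; if moreover \( G_s \) is conjugate to \( G_m \), then \cref{prop::compactLieSubgroup:conjugatedSubgroupEqual} yields \( G_s = G_m \), so \( S_{(G_m)} = S_{G_m} \). For a linear slice I would identify \( S_{G_m} \) with the \( G_m \)-fixed points of the representation, i.e.\ the closed linear subspace \( X^{G_m} = \set{x \in X \given g \cdot x = x \text{ for all } g \in G_m} \) intersected with the slice neighborhood; closedness follows from continuity of the action. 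The main obstacle — indeed the only place requiring genuine infinite-dimensional input — is showing that \( X^{G_m} \) is a \emph{split} subspace, so that it is a submanifold. Here I would average over the compact group \( G_m \) against its Haar measure to produce a continuous \( G_m \)-equivariant projection of \( X \) onto \( X^{G_m} \), whose kernel is a closed invariant complement; this makes \( X^{G_m} \) a closed split subspace and hence \( S_{G_m} \) a closed submanifold, so that \iref{i::slice:SliceDefPartialSliceSubmanifold} holds automatically.
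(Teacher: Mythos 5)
Your proof is correct and follows the paper's own route almost verbatim: antisymmetry of \( \leq \) by chaining the two subconjugations and applying \cref{prop::compactLieSubgroup:conjugatedSubgroupEqual} (the paper combines them as \( (ab)H(ab)^{-1} \subseteq H \), you as \( (ba)H(ba)^{-1} \subseteq H \) — the same argument), the set identities by the same lemma, openness via the neighboring subgroups theorem \cref{prop::slice:sliceNeighbourhoodHasSubconjugatedStabilizer} combined with \( M_{\geq (H)} \intersect M_{\leq (H)} = M_{(H)} \) (respectively \( M_{\geq H} \intersect M_{\leq (H)} = M_H \)), and \( S_{(G_m)} = S_{G_m} \) from \cref{prop::slice:mHasMaximalStabilizerOfWholeSlice} plus the lemma. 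The one place you genuinely diverge is the final step for linear slices. The paper is content to observe that the fixed-point set \( X_{G_m} \) is a \emph{closed} linear subspace of \( X \), which in its (non-split) notion of submanifold already makes \( S_{G_m} \) a closed submanifold of \( S \) — note that elsewhere the paper says \enquote{split submanifold} explicitly when splitting is meant. Your Haar-averaging argument, producing the continuous equivariant projection \( x \mapsto \int_{G_m} g \cdot x \dif g \) onto \( X^{G_m} \), is sound but proves more (a split complement) and quietly uses a hypothesis the proposition does not impose: for the vector-valued integral to exist one needs \( X \) to be (Mackey) complete, whereas \cref{def:slice:linearSlice} allows an arbitrary locally convex \( X \) (compare the completeness footnote attached to \cref{prop:sliceStratification:invariantComplementCompactGroup}). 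So your stronger conclusion is fine in the Fréchet setting the paper ultimately cares about, but it is not needed for \iref{i::slice:SliceDefPartialSliceSubmanifold}; in full generality you should either add the completeness assumption or fall back on the paper's closedness-only argument.
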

\begin{proof}
	\begin{thmenumerate}*
		\item
			Antisymmetry of \( \leq \) is a direct consequence of \cref{prop::compactLieSubgroup:conjugatedSubgroupEqual}.
			Indeed, all stabilizer subgroups are compact, because the action is proper.
			If \( H \) and \( K \) are stabilizer subgroups with \( (K) \leq (H) \) and \( (H) \leq (K) \), then there exist \( a, b \in G \) such that \( a K a^{-1} \subseteq H \) and \( b H b^{-1} \subseteq K \).
			Combining both relations yields \( (ab) H (ab)^{-1} \subseteq a K a^{-1} \subseteq H \).
			Now, \cref{prop::compactLieSubgroup:conjugatedSubgroupEqual} implies \( (ab) H (ab)^{-1} = H \).
			Therefore, \( a K a^{-1} = H \) and so \( (K) = (H) \).
		\item
			The identities~\eqref{eq:properAction:relationBetweenStabSubsets} follow from \labeliref{prop::properAction:orbitTypeIsPartialOrder} or directly from \cref{prop::compactLieSubgroup:conjugatedSubgroupEqual}.
			For example, if \( m \in M_{\geq H} \intersect M_{\leq (H)} \), then \( G_m \supseteq H \) and there exists \( a \in G \) such that \( a G_m a^{-1} \subseteq H \).
			Therefore, \( a G_m a^{-1} \subseteq H \subseteq G_m \) and thus \( G_m = H \) by \cref{prop::compactLieSubgroup:conjugatedSubgroupEqual}.
		\item
			Let \( m \in M_{(H)} \) and let \( S \) be a slice at \( m \).
			The slice neighborhood \( V \) provided by \iref{i::slice:SliceDefLocallyProduct} is open in \( M \) and hence the intersection \( V \intersect M_{\geq (H)} \) is open in \( M_{\geq (H)} \).
			However, \( V \subseteq M_{\leq (G_m)} = M_{\leq (H)} \) by \cref{prop::slice:sliceNeighbourhoodHasSubconjugatedStabilizer} and thus we have found an open neighborhood of \( m \) in \( M_{\geq (H)} \) that is contained in \( M_{\geq (H)} \intersect M_{\leq (H)} = M_{(H)} \). 

			Openness of \( M_H \) in \( M_{\geq H} \) follows from \labeliref{prop:properAction:relationBetweenStabSubsets} by a similar argument using the relation \( M_{\geq H} \intersect M_{\leq (H)} = M_H \).
		\item
			Let \( S \) be a slice at \( m \in M \).
			The point \( m \) has the maximal stabilizer subgroup of the whole slice, see \cref{prop::slice:mHasMaximalStabilizerOfWholeSlice}.
			Thus, using \labeliref{prop:properAction:relationBetweenStabSubsets}, we get \( S_{(G_m)} \subseteq M_{\leq G_m} \intersect M_{(G_m)} = M_{G_m} \) and hence \( S_{(G_m)} \subseteq S_{G_m} \).
			The converse inclusion is trivial.

			Finally, suppose that \( S \) is a linear slice, which is \( G_m \)-equivariantly diffeomorphic to an open subset of the locally convex vector space \( X \).
			Note that \( X_{G_m} \) is a closed subspace of \( X \).
			Thus, \( S_{(G_m)} = S_{G_m} \) implies that the partial slice \( S_{(G_m)} \) is a closed submanifold of \( S \). 
			\qedhere
	\end{thmenumerate}
\end{proof}

\section{Slice Theorem}

\subsection{General slice theorem}

The main idea for the construction of the slice at a point \( m \in M \) is to move from \( m \) in the direction normal to the orbit.
This does not work per se on a naked manifold, but needs the following additional structure.
\begin{defn}[\textnormal{\cf \parencite[Section~42.4]{KrieglMichor1997}}]
	\label{def:slice:localAddition}
	A \emphDef{local addition} is a smooth map \( \eta: \TBundle M \supseteq U \to M \) defined on an open neighborhood \( U \) of the zero section in \( \TBundle M \) such that
	\begin{thmenumerate}
		\item
			the composition of \( \eta \) with the zero section of \( \TBundle M \) is the identity on \( M \), \ie, \( \eta(m, 0) = m \) for all \( m \in M \),
		\item
			the map \( \pr \times \eta: \TBundle M \supseteq U \to M \times M \) is a diffeomorphism onto an open neighborhood of the diagonal, where \( \pr: \TBundle M \to M \) is the canonical projection.
	\end{thmenumerate}
	For \( M \) being a \( G \)-manifold, a local addition \( \eta \) is called \emphDef{\( G \)-equivariant} if \( U \) is an \( G \)-invariant subset of \( \TBundle M \) and \( \eta: U \to M \) is \( G \)-equivariant.  
\end{defn}

\begin{example}[affine local addition]
	\label{ex:slice:localAddition:linear}
	Let \( X \) be a locally convex vector space.
	Then, the map
	\begin{equation}
		\eta: \TBundle X \isomorph X \times X \to X, \qquad (x, v) \mapsto x + v
	\end{equation}
	is a local addition.
	If \( X \) is endowed with a linear action of a Lie group \( G \), then \( \eta \) is \( G \)-equivariant. 
	This example directly generalizes to affine spaces and open subsets thereof.
\end{example}

\begin{example}[local additions on Lie groups]
	\label{ex:slice:localAddition:group}
	Let \( G \) be a Lie group.
	For a chart \( \kappa: G \supseteq V \to V' \subseteq \LieA{g} \) at the identity, put \( U \defeq \bigDisjUnion_{g \in G} g \ldot V' \subseteq \TBundle G \).
	Then, the map
	\begin{equation}
		\eta: \TBundle G \supseteq U \to G, \qquad g \ldot \xi \mapsto g \, \kappa^{-1}(\xi)
	\end{equation}
	is a left-equivariant local addition on \( G \).
\end{example}
As we will see in \cref{cha::lcm:gradedRiemannianGeometry}, the exponential map of a Riemannian metric yields a local addition, too.

\begin{defn}
	\label{defn:slice:localAddition:adapted}
	We say that a local addition \( \eta: \TBundle M \supseteq U \to M \) is \emphDef{adapted to a submanifold \( P \subseteq M \)} with an embedded normal bundle \( \NBundle P \subseteq \restr{\TBundle M}{P} \) if the restriction of \( \eta \) to \( U \intersect \NBundle P \) is a local diffeomorphism at every point of the zero section of \( \NBundle P \).
\end{defn}
With a slight abuse of notation, the restriction of \( \eta \) to \( U \intersect \NBundle P \) will be denoted by \( \eta \) as well. 

\begin{thm}[General slice theorem]
	\label{prop:slice:sliceTheoremGeneral}
	 Let \( M \) be a \( G \)-manifold with a proper \( G \)-action and let \( m \in M \).
 	Assume that the following conditions hold:
 	\begin{enumerate}
 		\item
 			\label{i:slice:sliceTheoremGeneral:stabilizer}
 			The stabilizer \( G_m \) is a principal Lie subgroup of \( G \).
 		\item
 			\label{i:slice:sliceTheoremGeneral:orbit}
 			The orbit \( O \defeq G \cdot m \) is a locally closed submanifold whose normal bundle \( \NBundle O \) is realized as a smooth \( G \)-invariant subbundle of \( \restr{\TBundle M}{O} \).
 		\item 
 			\label{i:slice:sliceTheoremGeneral:localAddition}
 			There exists a \( G \)-equivariant local addition \( \eta: \TBundle M \supseteq U \to M \) adapted to the submanifold \( O \).
 		\item
 			\label{i:slice:sliceTheoremGeneral:metric}
 			\( M \) carries a \( G \)-invariant topological metric \( d \) compatible with the manifold topology on \( M \) and \( \TBundle M \) is endowed with a \( G \)-invariant topological fiber metric \( \rho \) compatible with the topology on the fibers of \( \TBundle M \).
 	\end{enumerate}
 	Then, the \( G \)-action on \( M \) admits a linear slice at \( m \).
\end{thm}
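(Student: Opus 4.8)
The plan is to construct the slice as the image under the local addition of a small ball in the normal space to the orbit at \( m \), mimicking the finite-dimensional tube construction but replacing every appeal to compactness of balls by an appeal to properness. First I would analyse the normal bundle. Since \( O = G \cdot m \isomorph G \slash G_m \) and \( \NBundle O \) is a \( G \)-invariant subbundle of \( \restr{\TBundle M}{O} \) by assumption~\ref{i:slice:sliceTheoremGeneral:orbit}, the stabilizer \( G_m \) acts linearly on the fiber \( X \defeq \NBundle_m O \) (the slice representation), and \( \NBundle O \isomorph G \times_{G_m} X \) as \( G \)-equivariant bundles. Using the \( G \)-invariant fiber metric \( \rho \) I set \( X^{<r} \defeq \set{v \in X \given \rho(v) < r} \), a \( G_m \)-invariant neighborhood of \( 0 \), and I take as candidate slice
\[
	S \defeq \eta\bigl(X^{<r}\bigr),
\]
for a radius \( r > 0 \) to be fixed. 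The restriction of \( \eta \) to \( \NBundle O \) plays the role of the tube map; under the above identification it is precisely \( \Phi \colon G \times_{G_m} X \to M \), \( \equivClass{g, v} \mapsto g \cdot \eta(v) = \eta(g \ldot v) \).

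Second, the étale property. Because \( \eta \) is adapted to \( O \) (assumption~\ref{i:slice:sliceTheoremGeneral:localAddition} together with \cref{defn:slice:localAddition:adapted}), it is a local diffeomorphism at every point of the zero section of \( \NBundle O \). Since being a local diffeomorphism is an open condition, \( \restr{\eta}{\NBundle O} \) is étale on an open \( G \)-invariant neighborhood \( \Omega \) of the zero section. As \( \tangent_{0_m}(\restr{\eta}{X}) \) is the inclusion of the complemented subspace \( X = \NBundle_m O \) into \( \TBundle_m M \), the set \( S = \eta(X^{<r}) \) is, for small \( r \), an embedded submanifold, and \( \iota_S \defeq \restr{\eta}{X^{<r}} \) is a \( G_m \)-equivariant diffeomorphism onto \( S \) with \( \iota_S(0) = m \). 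This already yields \iref{i::slice:SliceDefSliceInvariantUnderStab} and the linear-slice property \iref{i:slice:linearSlice}, while \iref{i::slice:SliceDefPartialSliceSubmanifold} follows from \cref{prop:properAction:sliceOrbitTypeEqualsSliceStab}.

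Third — and this is the crux — injectivity of the tube map. I claim there is \( r > 0 \) such that \( \Phi \) is injective on \( G \times_{G_m} X^{<r} \). I would argue by contradiction: if not, there are \( h_i \in G \) and \( v_i, w_i \in X \) with \( \rho(v_i), \rho(w_i) \to 0 \), \( \eta(h_i \ldot v_i) = \eta(w_i) \), and \( \equivClass{h_i, v_i} \neq \equivClass{e, w_i} \). Both \( v_i, w_i \to 0_m \) in \( \TBundle M \), so \( q_i \defeq \eta(v_i) \to m \) and \( h_i \cdot q_i = \eta(h_i \ldot v_i) = \eta(w_i) \to m \). Here properness enters decisively: by the sequence characterization in \cref{prop:lieGroupAction:properEquivalent}, the sequence \( (h_i) \) has a subsequence converging to some \( h_\infty \); continuity of the action together with \( q_{i} \to m \) and \( h_{i} \cdot q_{i} \to m \) forces \( h_\infty \cdot m = m \), so \( h_\infty \) lies in the compact stabilizer \( G_m \) (\cref{prop:lieGroupProperAction:properties}). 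Replacing \( h_i \) by \( h_\infty^{-1} h_i \to e \) and using \( G \)-equivariance of \( \eta \) and of \( \rho \), both \( h_\infty^{-1} h_i \ldot v_i \) and \( h_\infty^{-1} \ldot w_i \) converge to \( 0_m \) inside \( \NBundle O \) and have equal image under \( \eta \); injectivity of \( \eta \) on a neighborhood of \( 0_m \) (the étale property at \( 0_m \)) forces them to be equal for large \( i \), hence \( \equivClass{h_i, v_i} = \equivClass{e, w_i} \), a contradiction.

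Finally, I would assemble the slice. Intersecting \( \Omega \) with the injectivity region gives a \( G \)-invariant set on which \( \restr{\eta}{\NBundle O} \) is an injective local diffeomorphism, hence a diffeomorphism onto an open neighborhood of \( O \); this is the tube diffeomorphism and, composed with a local trivialization coming from a local section of \( G \to G \slash G_m \), it provides the diffeomorphism \( \chi^S \) of \iref{i::slice:SliceDefLocallyProduct}. Property \iref{i::slice:SliceDefOnlyStabNotMoveSlice} is then immediate from the injectivity just proved: if \( g \cdot s_1 = s_2 \) with \( s_1, s_2 \in S \), then \( \Phi(\equivClass{g, v_1}) = \Phi(\equivClass{e, v_2}) \) forces \( g \in G_m \). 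The main obstacle is precisely the injectivity step: in infinite dimensions the balls \( X^{<r} \) are not compact, so one cannot localize the tube map by a compactness argument as in the finite-dimensional proof; properness is exactly the substitute that tames the possibly large group elements \( h_i \) and reduces the whole question to the local injectivity of \( \eta \) near the zero vector \( 0_m \).
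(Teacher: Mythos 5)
Your proposal is correct in substance and shares the paper's overall architecture: the slice is the \( \eta \)-image of a \( \rho \)-ball in \( \NBundle_m O \), everything is reduced to injectivity of \( \eta \) on a \( G \)-invariant neighborhood of the zero section of \( \NBundle O \), and the verification of \iref{i::slice:SliceDefSliceInvariantUnderStab}--\iref{i:slice:linearSlice} then proceeds exactly as in the paper's \cref{prop:slice:sliceTheoremGeneral:sliceConstruction}, including the appeal to \cref{prop:properAction:sliceOrbitTypeEqualsSliceStab} for \iref{i::slice:SliceDefPartialSliceSubmanifold}. Where you genuinely diverge is the injectivity step. The paper (\cref{prop:slice:sliceTheoremGeneral:diffeoOrbit}, following \parencites[Proposition~2.3]{Ramras2011}[p.~64f]{Subramaniam1984}) proves it by a quantitative triangle-inequality argument using \emph{both} invariant metrics: \( \eta \) is injective on a ball \( U_\varepsilon(m,0) \), one pulls back a small \( d \)-ball \( \OpenBall_\delta(m) \), and the estimate \( d(g_2 \cdot m_2, g_1 \cdot m) < \varepsilon/2 + 2\delta < \varepsilon \) together with \( G \)-invariance of \( \rho \) places any two colliding points in a single translate \( U_\varepsilon(g_1 \cdot m, 0) \); properness plays no role in that step. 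You instead derive injectivity from properness by a compactness-and-contradiction argument, which makes the invariant metric \( d \) nearly irrelevant to this step (it only secures metrizability of \( M \)). Each route buys something: the paper's argument involves no extraction of convergent subsequences and hence no countability assumptions on \( G \), while yours isolates properness as the mechanism taming the group elements \( h_i \) and reduces the theorem to local injectivity of \( \eta \) at \( 0_m \).

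Two points in your writeup need repair. First, the sequence characterization \cref{prop:lieGroupAction:proper:sequenceMetric} is stated for \( G \) \emph{and} \( M \) metric spaces; assumption \iref{i:slice:sliceTheoremGeneral:metric} makes \( M \) metrizable, but the theorem does not assume \( G \) metrizable -- it is formulated for arbitrary locally convex Lie groups -- so your extraction of a convergent subsequence of \( (h_i) \) is unjustified as written. The fix is cheap: the set \( C \defeq \set{(h_i \cdot q_i, q_i) \given i \in \N} \cup \set{(m,m)} \) is compact in \( M \times M \), so \( \Upsilon_\ext^{-1}(C) \) is compact and contains all \( (h_i, q_i) \); pass to a convergent \emph{subnet} and run your argument with nets, noting that the only use of \enquote{for large \( i \)} is the eventual containment of the two nets in a fixed injectivity neighborhood of \( 0_m \), which works for nets verbatim. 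Second, your justification that \( S = \eta(X^{<r}) \) is an embedded submanifold -- namely that \( \tangent_{0_m}(\restr{\eta}{X}) \) is the inclusion of a complemented subspace -- proves nothing in the locally convex setting, where no inverse function theorem converts infinitesimal data into local statements; this is precisely the caveat the paper stresses. The correct reason is already in your hands: by \cref{defn:slice:localAddition:adapted}, \( \eta \) restricts to a diffeomorphism of a neighborhood \( W \) of \( 0_m \) in \( \NBundle O \) onto an open subset of \( M \), and for small \( r \) the ball \( X^{<r} \) is an open piece of the fiber \( W \intersect \NBundle_m O \), which is a submanifold of \( W \); this is also how \cref{prop:slice:sliceTheoremGeneral:sliceConstruction} argues. (A minor shared gap: your disk bundle \( \set{\xi \in \NBundle O \given \rho(\xi) < r} \), like the paper's ball \( \OpenBall_\varepsilon(m,0) \), is open only if \( \rho \) is continuous on the total space \( \TBundle M \) and not merely fiberwise; this continuity is implicit in assumption \iref{i:slice:sliceTheoremGeneral:metric}.)
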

Informally, the conditions in \cref{prop:slice:sliceTheoremGeneral} are used for the construction of the slice in the following way.
Assumption~\iref{i:slice:sliceTheoremGeneral:stabilizer} ensures that \( G \slash G_m \) is a smooth manifold.
According to~\iref{i:slice:sliceTheoremGeneral:orbit}, it is diffeomorphic to the orbit.
Moreover, condition~\iref{i:slice:sliceTheoremGeneral:orbit} also ensures that, on the infinitesimal level, there is a complement to the orbit in \( \TBundle_m M \).
Using~\iref{i:slice:sliceTheoremGeneral:localAddition}, we are able to move along the normal direction to obtain a submanifold \( S \) transversal to the orbit.
Finally, using the metrics given in~\iref{i:slice:sliceTheoremGeneral:metric}, we can control the size of \( S \) to make sure that it is a slice, indeed. 

\begin{remark}
	In the setting of \cref{prop:slice:sliceTheoremGeneral}, if we additionally assume that \( \TBundle_m M \) be complete for every \( m \in M \), then the assumption about the existence of \( \rho \) in condition~\iref{i:slice:sliceTheoremGeneral:metric} implies that \( M \) is a Fréchet manifold.
\end{remark}

The proof of \cref{prop:slice:sliceTheoremGeneral} will follow from \cref{prop:slice:sliceTheoremGeneral:diffeoOrbit,prop:slice:sliceTheoremGeneral:sliceConstruction} below.
The slice will be constructed as the image of \( \NBundle_m O \) under the local addition \( \eta \).
In order for \iref{i::slice:SliceDefLocallyProduct} to hold, the \emph{local} diffeomorphism $\eta: U \intersect \NBundle O \to M$ needs to be enhanced to a diffeomorphism of a neighborhood of the zero-section of \( \NBundle O \) onto an open neighborhood of the orbit in \( M \).
To move from the local picture to a semi-global description along the orbit, we will use a simple metric argument following \parencites[Proposition~2.3]{Ramras2011}[p.~64f]{Subramaniam1984}.

\begin{lemma}
	\label{prop:slice:sliceTheoremGeneral:diffeoOrbit}
	In the setting of \cref{prop:slice:sliceTheoremGeneral}, there exists a \( G \)-invariant open neighborhood \( V \) of the zero section in \( \NBundle O \) with \( V \subseteq U \intersect \NBundle O \) such that the restriction of \( \eta \) to \( V \) is a \( G \)-equivariant diffeomorphism onto an open neighborhood of \( O \) in \( M \).
\end{lemma}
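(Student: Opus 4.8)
The plan is to promote the purely local diffeomorphism property granted by \cref{defn:slice:localAddition:adapted} to a semi-global statement along the whole orbit, with the \( G \)-invariance of the data and the metrics from \iref{i:slice:sliceTheoremGeneral:metric} playing the role that compactness of the orbit would play in finite dimensions. First I would introduce the set \( W \subseteq U \intersect \NBundle O \) of points at which the restriction \( \eta|_{\NBundle O} \) is a local diffeomorphism. Being a local diffeomorphism at a point is an open condition, so \( W \) is open; it contains the entire zero section by adaptedness, and it is \( G \)-invariant because \( \eta \) is \( G \)-equivariant and \( G \) acts by diffeomorphisms on \( \NBundle O \) and on \( M \). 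Writing \( \rho(w) \) for the \( \rho \)-distance of a vector \( w \) from the zero vector of its fiber, the sets \( \set{\rho < \delta} \) are open and their traces on each fiber form a neighborhood basis of the origin. Hence some ball \( \set{w \in \NBundle_m O \given \rho(w) < \delta} \) lies in \( W \), and by \( G \)-invariance of \( W \) and \( \rho \) together with transitivity of \( G \) on \( O \), the \( G \)-invariant open set \( B_\delta \defeq \set{w \in \NBundle O \given \rho(w) < \delta} \) is contained in \( W \). On \( B_\delta \) the map \( \eta \) is already a local diffeomorphism, so it only remains to shrink \( \delta \) to achieve injectivity.

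The heart of the proof is to find \( 0 < \epsilon \leq \delta \) with \( \eta \) injective on \( V \defeq B_\epsilon \), which I would establish by contradiction using a metric argument. If no such \( \epsilon \) existed, one could pick \( w_i \neq w_i' \) in \( \NBundle O \) with \( \rho(w_i), \rho(w_i') \to 0 \) and \( \eta(w_i) = \eta(w_i') \). Using transitivity and \( G \)-invariance of \( \rho \), I translate each pair into the fiber over \( m \), arranging \( w_i \in \NBundle_m O \); then \( \rho(w_i) \to 0 \) forces \( w_i \to 0_m \) and hence \( x_i \defeq \eta(w_i) = \eta(w_i') \to m \). To control the base points \( p_i' \defeq \pr(w_i') \), I consider the continuous \( G \)-invariant function \( \zeta(w) \defeq d(\pr(w), \eta(w)) \), which vanishes on the zero section. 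Translating \( w_i' \) back into \( \NBundle_m O \) and using fiber-compatibility of \( \rho \), continuity of \( \eta \) and \( d \), and \( \zeta(0_m) = 0 \), I obtain \( \zeta(w_i') \to 0 \), that is \( d(p_i', x_i) \to 0 \); combined with \( x_i \to m \) this gives \( p_i' \to m \) in \( M \), and hence in \( O \) since \( O \) is locally closed.

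The remaining step, which I expect to be the main obstacle, is to upgrade the separate facts \( p_i' \to m \) and \( \rho(w_i') \to 0 \) to genuine convergence \( w_i' \to 0_m \) in the total space of \( \NBundle O \); these facts concern different fibers and cannot be merged directly. Here I would exploit that \( G_m \) is a principal Lie subgroup (assumption \iref{i:slice:sliceTheoremGeneral:stabilizer}): the orbit map \( \Upsilon_m \) then admits a smooth local section near \( m \) taking the value \( e \) at \( m \), so for large \( i \) there are elements \( g_i' \to e \) with \( g_i' \cdot p_i' = m \). Then \( g_i' \cdot w_i' \in \NBundle_m O \) has \( \rho \)-distance tending to \( 0 \), hence \( g_i' \cdot w_i' \to 0_m \), and continuity of the \( G \)-action on \( \NBundle O \) yields \( w_i' = (g_i')^{-1} \cdot (g_i' \cdot w_i') \to 0_m \). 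Now \( w_i \) and \( w_i' \) both converge to \( 0_m \), where \( \eta|_{\NBundle O} \) is a local diffeomorphism and hence injective on a neighborhood of \( 0_m \); since \( \eta(w_i) = \eta(w_i') \), this forces \( w_i = w_i' \) for large \( i \), the desired contradiction.

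Finally, with injectivity secured on \( V = B_\epsilon \), I would conclude directly: \( V \) is a \( G \)-invariant open neighborhood of the zero section with \( V \subseteq W \subseteq U \intersect \NBundle O \), on which \( \eta \) is simultaneously a local diffeomorphism and injective, hence a diffeomorphism onto its image. That image is open because local diffeomorphisms are open maps, it is \( G \)-invariant by equivariance, and it contains \( O \), the image of the zero section; thus it is an open neighborhood of \( O \) in \( M \), as required.
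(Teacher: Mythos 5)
Your proof is correct, but it globalizes injectivity by a genuinely different mechanism than the paper. The paper follows \parencites[Proposition~2.3]{Ramras2011}[p.~64f]{Subramaniam1984}: it works with mixed balls \( \OpenBall_\varepsilon(m,0) \) controlling both the base distance \( d(m,p) \) and the fiber norm \( \rho_p(X_p,0) \), passes to \( V_\delta \defeq \eta^{-1}\bigl(\OpenBall_\delta(m)\bigr) \intersect U_{\frac{\varepsilon}{2}}(m,0) \), whose points additionally satisfy \( d(m, \eta(X_p)) < \delta \), and then shows by a direct triangle-inequality estimate (using \( G \)-invariance of \( d \) and equivariance of \( \eta \)) that any two points of \( V \defeq \bigUnion_{g} g \ldot V_\delta \) with equal images lie in a single translate \( U_\varepsilon(g_1 \cdot m, 0) \), where injectivity holds; this is quantitative and uses only the metric data and adaptedness. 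You instead argue by contradiction with sequences, with your function \( \zeta(w) = d(\pr(w), \eta(w)) \) playing the role of the paper's condition \( d(m,\eta(X_p)) < \delta \), and you pull the offending points into the fiber \( \NBundle_m O \) via a local section of the orbit map. Two remarks: your section step needs more than principality of \( G_m \) — the elements \( g_i' \to e \) come from composing the bundle section \( \chi \) with \( \check{\Upsilon}_m^{-1} \), whose continuity (all you actually need, not smoothness) rests on properness via \cref{prop:lieGroupProperAction:properties}, applicable since \( M \) is metrizable by assumption (iv); so your route consumes hypothesis (i) and properness where the paper's does not, which is legitimate in the setting of \cref{prop:slice:sliceTheoremGeneral}. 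Also, openness of your ball \( B_\delta \) requires \( X_p \mapsto \rho_p(X_p,0) \) to be upper semicontinuous across fibers, which is not literally contained in fiberwise compatibility — but the paper makes the same tacit assumption when asserting \( \OpenBall_\varepsilon(m,0) \) is open in \( \TBundle M \), so this is no gap on your side. What your approach buys is a cleaner final neighborhood, the uniform ball \( V = B_\epsilon \); what it costs is constructivity (no control on \( \epsilon \)) and reliance on the homeomorphism \( G \slash G_m \isomorph O \).
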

\begin{proof}
	Since \( \eta: U \intersect \NBundle O \to M \) is a local diffeomorphism at every point of the zero section over \( O \), it is enough to show injectivity of \( \eta \) on some open neighborhood of the zero section in $\NBundle O$.
	First, note that the ball
	\begin{equation}
		\OpenBall_\varepsilon(m,0) \defeq \set{X_p \in \TBundle M \given d(m, p) < \varepsilon, \rho_{p}(X_p, 0) < \varepsilon}
	\end{equation}
	is an open neighborhood of \( (m, 0) \) in \( \TBundle M \).
	Since the local addition \( \eta \) restricts to a local diffeomorphism \( U \intersect \NBundle O \to M \) and since \( \NBundle O \) is a subbundle of \( \TBundle M \), we can assume that \( \eta \) is a diffeomorphism from the open neighborhood \( U_\varepsilon(m, 0) \defeq \OpenBall_\varepsilon(m, 0) \intersect \NBundle O \) of \( (m, 0) \) in \( \NBundle O \) onto an open neighborhood \( W_\varepsilon(m) \) of \( m \) in \( M \) for some sufficiently small \( \varepsilon \).
	Since \( W_\frac{\varepsilon}{2}(m) \) is an open neighborhood of \( m \), there exists \( \delta < \frac{\varepsilon}{4} \) such that the ball
	\begin{equation}
		\OpenBall_\delta(m) \defeq \set{p \in M \given d(m, p) < \delta}
	\end{equation}
	is contained in \( W_\frac{\varepsilon}{2}(m) \).
	Let \( V_\delta \subseteq U_\frac{\varepsilon}{2}(m,0) \subseteq \NBundle O \) be the inverse image of \( \OpenBall_\delta(m) \) under \( \eta \).
	Hence, by construction, every \( X_p \in V_\delta \) satisfies \( d(m, p) < \frac{\varepsilon}{2} \), \( \rho_p (X_p, 0) < \frac{\varepsilon}{2} \) and \( d(m, \eta(X_p)) < \delta \).

		\begin{figure}
		\centering
		\begin{tikzpicture}
			\draw
					[thick] 
					(-2, 3.6) .. controls (0, 1.5) and (8, -3.5) .. (6, 3.5) 
					node[pos=0.08, circle, fill=black, inner sep=0.2ex](m1){}
					node[pos=0.15, circle, fill=black, inner sep=0.2ex](m){} 
					node[pos=0.22, circle, fill=black, inner sep=0.2ex](m2){} 
					node[pos=0.45, circle, fill=black, inner sep=0.2ex](g1m1){} 
					node[pos=0.65, circle, fill=black, inner sep=0.2ex](g1m){} 
					node[pos=0.85, circle, fill=black, inner sep=0.2ex](g2m){} 
					node[pos=0.95, circle, fill=black, inner sep=0.2ex](g2m2){}; 
			\path (m1) node[above right] {$m_1$};
			\path (m) node[below left] {$m$};
			\path (m2) node[above, xshift=0.4ex, yshift=0.2ex] {$m_2$};
			\path (g1m1) node[below, yshift=-0.4ex] {$g_1\cdot m_1$};
			\path (g1m) node[below] {$g_1\cdot m$};
			\path (g2m) node[below right] {$g_2\cdot m$};
			\path (g2m2) node[below right] {$g_2\cdot m_2$};

			\fill (5, 1) circle [radius=1.5pt] node(intersection){};
			\path (intersection) node[above left] {$x$};
			
			\filldraw[fill=blue1, fill opacity=0.1] (g1m) ellipse[x radius=3.5, y radius=3.5];
			\draw (5.5, -2) node {$W_\varepsilon(g_1\cdot m)$};

			\filldraw[fill=blue2, fill opacity=0.1] (m) circle[radius=1.5];
			\draw (-0.9, 0.5) node {$B_{\frac{\varepsilon}{2}}(m)$};
			
			\draw[dashed, -latex] (g1m1) -- (intersection);
			\draw[dashed, -latex] (g2m2) -- (intersection);
		\end{tikzpicture}
		\caption{Setup of the proof of \cref{prop:slice:sliceTheoremGeneral:diffeoOrbit}.}
		\label{fig:slice:sliceTheoremGeneral:diffeoOrbit}
	\end{figure}
	We claim that \( \eta \) restricts to an injective map on
	\begin{equation}
	 	V \defeq \bigUnion_{g \in G} g \ldot V_\delta \subseteq U \intersect \NBundle O.
	\end{equation}
	For this purpose, let \( g_1, g_2 \in G \) and \( X^1_{m_1}, X^2_{m_2} \in V_\delta \) such that \( \eta(g_1 \ldot X^1_{m_1}) = x = \eta(g_2 \ldot X^2_{m_2}) \).
	This setup is illustrated in \cref{fig:slice:sliceTheoremGeneral:diffeoOrbit}.
	We have
	\begin{equation}\begin{split}
		d(g_2 \cdot m_2, g_1 \cdot m) 
			&\leq d(g_2 \cdot m_2, g_2 \cdot m) + d(g_2 \cdot m, x) + d(x, g_1 \cdot m) 
			\\
			&= d(m_2, m) + d(m, \eta(X^2_{m_2})) + d(\eta(X^1_{m_1}), m) 
			\\
			&< \frac{\varepsilon}{2} + 2 \delta < \varepsilon,
	\end{split}\end{equation}
	because \( \eta \) is \( G \)-equivariant and \( d \) is \( G \)-invariant.
	Moreover, we have 
	\begin{equation}
		\rho_{g_2 \cdot m_2}(g_2 \ldot X^2_{m_2}, 0)
			= \rho_{m_2}(X^2_{m_2}, 0)
			< \frac{\varepsilon}{2}.
	\end{equation}
	Hence, in summary, \( g_2 \ldot X^2_{m_2} \in U_\varepsilon(g_1 \cdot m, 0) \).
	However, by \( G \)-equivariance, \( \eta \) is injective on \( U_\varepsilon(g_1 \cdot m, 0) \) and, thus, \( g_1 \ldot X^1_{m_1} = g_2 \ldot X^2_{m_2} \).
	Thus, \( \eta \) is injective on \( V \), which concludes the proof.
\end{proof}

Finally, we come to the construction of the slice.
\begin{lemma}
	\label{prop:slice:sliceTheoremGeneral:sliceConstruction}
	In the setting of \cref{prop:slice:sliceTheoremGeneral}, there exists \( \varepsilon > 0 \) such that 
	\begin{equation}
		\label{eq:slice:sliceTheoremGeneral:sliceConstruction}
		S \defeq \set{\eta(X_m) \in M \given X_m \in \NBundle_m O, \rho_m (X_m, 0) < \varepsilon}
	\end{equation}
	is a linear slice at \( m \).
\end{lemma}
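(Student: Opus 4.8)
The plan is to take the slice to be the image \( S = \eta(B_\varepsilon) \) of the \( \rho_m \)-ball \( B_\varepsilon \defeq \set{X_m \in \NBundle_m O \given \rho_m(X_m, 0) < \varepsilon} \) in the model space \( X \defeq \NBundle_m O \), and to read off the defining properties of a linear slice from \cref{prop:slice:sliceTheoremGeneral:diffeoOrbit}. Let \( V \subseteq U \intersect \NBundle O \) be the \( G \)-invariant neighborhood of the zero section from that lemma, on which \( \eta \) is a \( G \)-equivariant diffeomorphism onto an open neighborhood of \( O \). Since \( \rho \) is compatible with the topology on the fibers, the balls \( B_\varepsilon \) form a neighborhood basis of \( 0 \) in \( X \); hence for \( \varepsilon \) small enough \( B_\varepsilon \subseteq V \intersect \NBundle_m O \), and \( \iota_S \defeq \restr{\eta}{B_\varepsilon} \) is a diffeomorphism onto \( S \), so that \( S \) is a submanifold of \( M \) containing \( m = \eta(0_m) \). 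Because \( \NBundle O \) is \( G \)-invariant and \( G_m \) fixes \( m \), the stabilizer acts linearly and continuously on \( X \); \( G \)-invariance of \( \rho \) together with linearity gives \( \rho_m(h \ldot X_m, 0) = \rho_m(X_m, 0) \) for \( h \in G_m \), so \( B_\varepsilon \) is \( G_m \)-invariant, and \( G \)-equivariance of \( \eta \) makes \( \iota_S \) a \( G_m \)-equivariant diffeomorphism. This is exactly \iref{i:slice:linearSlice}, and it immediately yields \( G_m \cdot S = S \), that is, \iref{i::slice:SliceDefSliceInvariantUnderStab}.

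For \iref{i::slice:SliceDefOnlyStabNotMoveSlice} I would invoke injectivity of \( \eta \) on \( V \): if \( g \cdot \eta(X^1) = \eta(X^2) \) with \( X^1, X^2 \in B_\varepsilon \), then \( G \)-equivariance gives \( \eta(g \ldot X^1) = \eta(X^2) \); since \( V \) is \( G \)-invariant, both \( g \ldot X^1 \) and \( X^2 \) lie in \( V \), so \( g \ldot X^1 = X^2 \), and comparing base points yields \( g \cdot m = m \), i.e. \( g \in G_m \).

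The main work is \iref{i::slice:SliceDefLocallyProduct}. By \iref{i:slice:sliceTheoremGeneral:stabilizer}, \( G \to G \slash G_m \) is a principal bundle, so there is a local section \( \chi \colon G \slash G_m \supseteq U \to G \) with \( \chi(\equivClass{e}) = e \). I would use that, by \iref{i:slice:sliceTheoremGeneral:orbit} and \iref{i:slice:sliceTheoremGeneral:stabilizer}, the orbit map descends to a diffeomorphism \( G \slash G_m \to O \) and that the \( G \)-invariant smooth subbundle \( \NBundle O \) is the associated bundle \( G \times_{G_m} \NBundle_m O \); the section \( \chi \) then trivializes it, giving a diffeomorphism
\begin{equation}
	\Phi \colon U \times X \to \restr{\NBundle O}{W}, \qquad (\equivClass{g}, X_m) \mapsto \chi(\equivClass{g}) \ldot X_m,
\end{equation}
onto the restriction of \( \NBundle O \) over the open set \( W \defeq \chi(U) \cdot m \subseteq O \). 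Because \( B_\varepsilon \subseteq V \) and \( V \) is \( G \)-invariant, \( \Phi \) maps the open set \( U \times B_\varepsilon \) diffeomorphically onto an open subset of \( V \). A direct check gives the identity \( \chi^S \circ (\id_U \times \iota_S) = \eta \circ \restr{\Phi}{U \times B_\varepsilon} \), using \( \iota_S = \restr{\eta}{B_\varepsilon} \) and \( G \)-equivariance of \( \eta \). The right-hand side is a composition of diffeomorphisms, hence a diffeomorphism onto an open neighborhood of \( m = \eta(0_m) \); since \( \id_U \times \iota_S \) is a diffeomorphism, so is \( \chi^S \colon U \times S \to V \). I expect this step to be the \emph{main obstacle}, its heart being the smooth local triviality of \( \NBundle O \) via the group action, which rests on the \( G \)-invariance and smoothness of the normal bundle in \iref{i:slice:sliceTheoremGeneral:orbit} and the principality of \( G_m \) from \iref{i:slice:sliceTheoremGeneral:stabilizer}. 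Note that no inverse function theorem enters here, as \( \chi^S \) is produced purely by composing diffeomorphisms.

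Finally, \iref{i::slice:SliceDefPartialSliceSubmanifold} I would not check by hand. Since the action is proper, stabilizers are compact Lie subgroups, and the maximal-stabilizer property \cref{prop::slice:mHasMaximalStabilizerOfWholeSlice}, which uses only \iref{i::slice:SliceDefOnlyStabNotMoveSlice}, together with the linearity just established places us in the situation of \cref{prop:properAction:sliceOrbitTypeEqualsSliceStab}: one obtains \( S_{(G_m)} = S_{G_m} = \iota_S(B_\varepsilon \intersect X_{G_m}) \), where \( X_{G_m} \) is the closed linear subspace of \( G_m \)-fixed vectors. Hence \( S_{(G_m)} \) is a closed submanifold of \( S \), completing the verification that \( S \) is a linear slice at \( m \).
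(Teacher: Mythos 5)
Your proposal is correct and follows essentially the same route as the paper's proof: take \( S \) to be the \( \eta \)-image of a small \( \rho_m \)-ball in \( \NBundle_m O \), use \cref{prop:slice:sliceTheoremGeneral:diffeoOrbit} for global injectivity, verify \iref{i::slice:SliceDefSliceInvariantUnderStab}, \iref{i::slice:SliceDefOnlyStabNotMoveSlice} and \iref{i:slice:linearSlice} from equivariance of \( \eta \) and invariance of \( \rho \), obtain \iref{i::slice:SliceDefLocallyProduct} by trivializing \( \NBundle O \) via the local section \( \chi \) so that \( \chi^S \) is a composition of diffeomorphisms, and settle \iref{i::slice:SliceDefPartialSliceSubmanifold} through properness and \cref{prop:properAction:sliceOrbitTypeEqualsSliceStab} with the closed fixed-point subspace. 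Your write-up is in places even slightly more careful than the paper's (spelling out the injectivity argument for \iref{i::slice:SliceDefOnlyStabNotMoveSlice}, choosing \( \varepsilon \) with \( B_\varepsilon \subseteq V \), and noting that the appeal to \cref{prop::slice:mHasMaximalStabilizerOfWholeSlice} uses only \iref{i::slice:SliceDefOnlyStabNotMoveSlice}, which avoids any circularity), but these are refinements of the same argument rather than a different approach.
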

\begin{proof}
	Let \( \varepsilon > 0 \) be sufficiently small so that the construction in the proof of \cref{prop:slice:sliceTheoremGeneral:diffeoOrbit} yields a \( G \)-invariant open neighborhood \( V \) of the zero section in \( \NBundle O \) with \( V \subseteq U \intersect \NBundle O \) and such that the restriction of \( \eta \) to \( V \) is a \( G \)-equivariant diffeomorphism onto an open neighborhood of \( O \) in \( M \). 
	Let \( S \) be defined as in~\eqref{eq:slice:sliceTheoremGeneral:sliceConstruction}.
	To start with, we note that \( S \) is a submanifold, because \( \NBundle O \) is a subbundle of \( \restr{\TBundle M}{O} \) and the restriction of \( \eta \) to \( U \intersect \NBundle O \) is a local diffeomorphism at \( (m, 0) \).
	Next, we have to verify that \( S \) has all the properties of a linear slice according to \cref{def:slice:slice,def:slice:linearSlice}:
	\begin{enumerate}[label=(SL\arabic*), ref=(SL\arabic*), leftmargin=*]
		\item
			Since \( \eta \) is \( G \)-equivariant and \( \rho \) is \( G \)-invariant, \( S \) is invariant under the action of the stabilizer $G_m$.
		\item
			\( (g \cdot S) \cap S \neq \emptyset \) implies \( g \in G_m \) due to the \( G \)-equivariance of \( \eta \).
		\item
			Let $\chi: G \slash G_m \supseteq W \to G$ be a local section of \( G \to G \slash G_m \) defined on an open neighborhood of the identity coset.
			By properness, the orbit map \( \Upsilon_m \) yields a diffeomorphism between \( G \slash G_m \) and \( G \cdot m \) endowed with the relative topology.
			Since the \( G \)-action is transitive on the orbit, the normal bundle \( \NBundle O \) is trivial.
			Thus, after possibly shrinking \( W \), we may assume that the map
			\begin{equation}
				W \times (U_\varepsilon(m, 0) \intersect \NBundle_m O) \to \NBundle O, \quad (\equivClass{g}, X_m) \mapsto \chi(\equivClass{g}) \ldot X_m
			\end{equation}
			is a diffeomorphism onto an open neighborhood of \( (m, 0) \) in \( \NBundle O \).
			Here, \( U_\varepsilon(m, 0) \) denotes the open \( \varepsilon \)-ball in \( \NBundle O \) centered at \( (m, 0) \) , as in the proof of \cref{prop:slice:sliceTheoremGeneral:diffeoOrbit}.
			By construction, every $s \in S$ is of the form $\eta(X_m)$ with $X_m \in \TBundle_m M$.
			Using the equivariance of \( \eta \), the map $\chi^S: W \times S \to M \) defined by \( \chi^S(\equivClass{g}, s) = \chi(\equivClass{g}) \cdot s$ can be written as
			\begin{equation}
				\chi^S(\equivClass{g}, s)
					= \chi(\equivClass{g}) \cdot \eta(X_m)
					= \eta(\chi(\equivClass{g}) \ldot X_m).
			\end{equation}
			Thus, \( \chi^S \) is a composition of diffeomorphisms and, therefore, it is a diffeomorphism onto an open neighborhood of \( m \) in \( M \).
		\item
			By \cref{prop:properAction:sliceOrbitTypeEqualsSliceStab}, properness of the action implies \( S_{(G_m)} = S_{G_m} \).
			Since \( S \) is \( G_m \)-equivariantly diffeomorphic to an open subset of \( \NBundle_m O \), the set \( S_{G_m} \) is a submanifold of \( S \) if \( (\NBundle_m O)_{G_m} \) is a submanifold of \( \NBundle_m O \).
			Being the fixed point set under the linear \( G_m \)-action, the latter is a closed subspace of \( \NBundle_m O \). 
		\item
			By construction, \( \eta \) is a \( G_m \)-equivariant diffeomorphism from an open neighborhood of \( 0 \) in \( \NBundle_m O \) onto \( S \).
			\qedhere
	\end{enumerate}
\end{proof}

\begin{remark}
	The assumptions of \cref{prop:slice:sliceTheoremGeneral} are automatically satisfied for a proper action of a Banach Lie group \( G \) on Hilbert manifold \( M \) carrying a \( G \)-invariant strong Riemannian metric \( \gamma \).
	Indeed, for every \( m \in M \), the stabilizer \( G_m \) is a compact subgroup of the Banach Lie group \( G \) and hence it is a principal Lie subgroup of \( G \) due to \parencite[Theorem~IV.3.16 and Remark~IV.4.13b]{Neeb2006}.
	The classical inverse function theorem shows that the orbit \( G \cdot m \) is a locally closed submanifold.
	Moreover, since the Riemannian metric \( \gamma \) is \( G \)-invariant, the orthogonal complement of \( \LieA{g} \ldot m \) in \( \TBundle_m M \) yields a realization of the normal bundle of \( G \cdot m \) as a \( G \)-invariant subbundle of \( \TBundle M \).
	Furthermore, the exponential map of \( \gamma \) is a \( G \)-equivariant local addition, which is adapted to \( G \cdot m \) as the inverse function theorem shows.
	Finally, the path length metric associated to \( \gamma \) provides a \( G \)-invariant topological metric \( d \) on \( M \) compatible with the manifold topology.
\end{remark}

\subsection{Slice theorem for Fréchet representations of compact groups}
\label{sec:sliceTheorem:linearActionCompactGroup}

In this section, we will use the general slice \cref{prop:slice:sliceTheoremGeneral} to show that every linear action of a compact group on a Fréchet space admits a slice.
Let \( X \) be a Fréchet space and let \( G \) be a compact Lie group acting linearly on \( X \).
Since \( G \) is compact, the action is proper according to \cref{prop:lieGroupAction:proper:sequenceMetric}.
The other assumptions of \cref{prop:slice:sliceTheoremGeneral} are verified in a sequence of lemmas.
\begin{lemma}
	\label{prop:sliceTheorem:linearActionCompactGroup:principalSubgroup}
	Let \( X \) be a Fréchet space and let \( G \) be a compact Lie group that acts linearly and continuously on \( X \).
	Then, every stabilizer subgroup is a finite-dimensional principal Lie subgroup of \( G \).
\end{lemma}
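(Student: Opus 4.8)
The plan is to reduce everything to classical finite-dimensional Lie theory, observing that the Fréchet space \( X \) enters only through its Hausdorff topology. Since \( G \) is a compact Lie group, it is in particular finite-dimensional, so the genuine content of the statement concerns the closed subgroup \( G_x \subseteq G \) and the quotient \( G \slash G_x \), both of which live inside a finite-dimensional group.

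First I would show that every stabilizer \( G_x \) is closed in \( G \). The orbit map \( \Upsilon_x \colon G \to X \), \( g \mapsto g \cdot x \), is continuous because the action is continuous, and \( X \), being a Fréchet space, is Hausdorff; hence \( \set{x} \) is closed and \( G_x = \Upsilon_x^{-1}(\set{x}) \) is a closed subgroup of \( G \). Being closed in the compact group \( G \), the subgroup \( G_x \) is itself compact, and by Cartan's closed-subgroup theorem it is an embedded, finite-dimensional Lie subgroup. Finally, for a finite-dimensional Lie group \( G \) and a closed subgroup \( G_x \), the coset space \( G \slash G_x \) carries a canonical smooth structure for which the projection \( G \to G \slash G_x \) is a principal \( G_x \)-bundle; thus \( G_x \) is a principal Lie subgroup in the sense of the definition above.

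In fact, the statement also follows immediately from \cref{prop:properAction:stabilizerLieSubgroup}: a compact Lie group is locally exponential, and its linear action on \( X \) is proper by \cref{prop:lieGroupAction:proper:sequenceMetric}, so that lemma applies verbatim. Either way, I do not expect a real obstacle here; the only point requiring attention is the closedness of \( G_x \), which hinges on the Hausdorffness of the model space, after which the argument is purely finite-dimensional.
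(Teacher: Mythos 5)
Your argument is correct and follows essentially the same route as the paper: closedness of \( G_x \) as the preimage of the closed set \( \set{x} \) under the continuous orbit map, Cartan's closed-subgroup theorem, and the classical fact that \( G \to G \slash G_x \) is a principal \( G_x \)-bundle for a closed subgroup of a finite-dimensional Lie group. The one place where you assert rather than argue — \enquote{since \( G \) is a compact Lie group, it is in particular finite-dimensional} — is precisely the step that carries the weight in the paper's proof. In this paper, \emph{Lie group} means a Lie group modeled on an arbitrary locally convex space, so finite-dimensionality of a compact Lie group is not definitional; it holds because the model space of a compact manifold is locally compact, and every locally compact Hausdorff topological vector space is finite-dimensional by Riesz's theorem, cited in the paper as \parencite[Theorem~9.2]{Treves1967}. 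Your proof is complete modulo this one-line justification, but be aware that in this setting it is this point, rather than the closedness of \( G_x \) (which is routine), that requires attention. Your alternative derivation from \cref{prop:properAction:stabilizerLieSubgroup} is also legitimate: \( G \) and \( X \) are metrizable, so properness follows from \cref{prop:lieGroupAction:proper:sequenceMetric} exactly as the paper itself notes before the lemma, and a Fréchet space is compactly generated, so the auxiliary results on proper actions apply; note, however, that local exponentiality of \( G \) is itself a consequence of its finite-dimensionality, so this second route does not bypass the missing step either.
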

\begin{proof}
	As a compact Lie group, \( G \) is modeled on a locally compact vector space.
	But every locally compact Hausdorff topological space is finite-dimensional \parencite[Theorem~9.2]{Treves1967}.
	Thus, \( G \) is a finite-dimensional Lie group.
	Since stabilizer subgroups are always closed, they are finite-dimensional principal Lie subgroups according to the ordinary theory for finite-dimensional compact Lie groups.
\end{proof}
More generally, the statement of \cref{prop:sliceTheorem:linearActionCompactGroup:principalSubgroup} is true for every action of a finite-dimensional Lie group on any infinite-dimensional manifold.

The next lemma establishes the existence of invariant metrics on \( X \) and on \( \TBundle X \isomorph X \times X \).
\begin{lemma}
	\label{prop:sliceTheorem:linearActionCompactGroup:invariantMetric}
	Let \( X \) be a Fréchet space and let \( G \) be a compact Lie group that acts linearly and continuously on \( X \).
	Then, there exists a \( G \)-invariant topological metric on \( X \) which induces the same Fréchet topology.
\end{lemma}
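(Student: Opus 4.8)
The plan is to upgrade a countable family of seminorms defining the Fréchet topology of \( X \) to a \( G \)-invariant one, exploiting compactness of \( G \), and then to feed the invariant seminorms into the standard construction of a translation-invariant metric on a Fréchet space. So fix a family of seminorms \( (p_n) \) generating the topology of \( X \). For each \( n \), I would define
\[
	q_n(x) \defeq \sup_{g \in G} p_n(g \cdot x), \qquad x \in X.
\]
Since \( G \) acts linearly, each \( q_n \) inherits absolute homogeneity and subadditivity from \( p_n \), so it is again a seminorm; it is \( G \)-invariant because right translation merely permutes the supremum over \( G \). The supremum is finite because, for fixed \( x \), the map \( g \mapsto p_n(g \cdot x) \) is continuous on the compact group \( G \).

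The crux is to verify that the family \( (q_n) \) induces the original topology. One inclusion is immediate: since the identity lies in \( G \), we have \( p_n \leq q_n \), so the \( q \)-topology is at least as fine as the \( p \)-topology. For the reverse, I would show that each \( q_n \) is continuous at \( 0 \) with respect to the original topology, and this is exactly where continuity of the action enters. The function \( F_n(g, x) \defeq p_n(g \cdot x) \) is continuous on \( G \times X \) and vanishes on \( G \times \set{0} \). Given \( \varepsilon > 0 \), for every \( g \in G \) continuity furnishes a product neighborhood \( V_g \times U_g \) of \( (g, 0) \) on which \( F_n < \varepsilon \); compactness of \( G \) extracts a finite subcover \( V_{g_1}, \dots, V_{g_k} \), and then \( U \defeq \bigcap_i U_{g_i} \) is a neighborhood of \( 0 \) on which \( q_n \leq \varepsilon \). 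A seminorm that is continuous at the origin of a locally convex space is dominated by a constant times a finite maximum of the \( p_m \), so the \( q \)-topology is also coarser than the \( p \)-topology; the two therefore coincide.

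Finally, with the \( G \)-invariant generating family \( (q_n) \) in hand, I would set
\[
	d(x, y) \defeq \sum_{n} 2^{-n} \, \frac{q_n(x - y)}{1 + q_n(x - y)}.
\]
This is the usual complete translation-invariant metric inducing the Fréchet topology, and \( G \)-invariance is immediate from \( q_n(g \cdot x - g \cdot y) = q_n(g \cdot (x - y)) = q_n(x - y) \), using linearity of the action together with invariance of each \( q_n \).

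I expect the only genuine subtlety to be the uniform-in-\( g \) smallness argument establishing continuity of \( q_n \) at \( 0 \), where compactness of \( G \) is essential to pass from pointwise to uniform control over the group; everything else is routine seminorm bookkeeping. As an alternative to the supremum, one could average \( p_n(g \cdot x) \) against the normalized Haar measure of \( G \), which yields the same conclusion by the same continuity-plus-compactness mechanism.
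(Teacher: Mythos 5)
Your proof is correct, and it reaches the paper's conclusion by a genuinely different mechanism. You enforce invariance by taking the pointwise supremum \( q_n(x) = \sup_{g \in G} p_n(g \cdot x) \) and establish continuity of \( q_n \) at the origin via a tube-lemma argument: joint continuity of the action map \( G \times X \to X \) gives product neighborhoods \( V_g \times U_g \) on which \( p_n(g \cdot x) < \varepsilon \), and compactness of \( G \) upgrades this to uniform smallness over the whole group. The paper instead averages against the normalized Haar measure, \( \norm{x}^G \defeq \int_G \norm{a \cdot x} \dif a \), and obtains the two-sided comparison with the original seminorms from the uniform boundedness principle: since \( \sup_{g \in G} \norm{g \cdot x}_k < \infty \) for each fixed \( x \) (compactness of \( G \) plus continuity of orbit maps), barrelledness of the Fréchet space yields \( \norm{g \cdot x}_k \leq C \norm{x}_j \) uniformly in \( g \), hence \( \norm{x}^G_k \leq C \norm{x}_j \), while the reverse bound \( \norm{x}_i \leq D \norm{x}^G_l \) comes from writing \( \norm{x}_i = \norm{g^{-1} \cdot (g \cdot x)}_i \). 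The trade-off is instructive: your route avoids both integration and the uniform boundedness principle but genuinely needs \emph{joint} continuity of the action (which the hypotheses supply), whereas the paper's UBP route would work with only separate continuity; also, your suprema trivially dominate the originals since \( e \in G \) gives \( p_n \leq q_n \), making one direction of the topology comparison immediate, while the paper extracts both directions from functional-analytic estimates. Your closing remark about Haar averaging is in fact exactly the paper's construction, and both arguments terminate identically with the standard series metric, whose \( G \)-invariance is inherited from the invariant seminorms.
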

\begin{proof}
	For every seminorm \( \normDot \) on \( X \), we define its \( G \)-average by
	\begin{equation}
		\norm{x}^G \defeq \int_G \norm{a \cdot x} \dif a, 
	\end{equation}
	where \( \dif a \) denotes the right-invariant normalized Haar measure on \( G \).
	It is straightforward to see that \( \normDot^G \) is again a seminorm.
	Moreover, \( \normDot^G \) is \( G \)-invariant by construction.
	Indeed, we have
	\begin{equation}
		\norm{g \cdot x}^G 
			= \int_G \norm{(ag) \cdot x} \dif a
			= \int_G \norm{b \cdot x} \dif b
			= \norm{x}^G 
	\end{equation}
	for all \( g \in G \).
	Now, let \( \normDot_k \) be a directed set of seminorms generating the topology of \( X \).
	Since for all \( x \in X \), we have
	\begin{equation}
		\sup_{g \in G} \norm{g \cdot x}_k < \infty
	\end{equation}
	by compactness of \( G \), the uniform boundedness principle (\eg, \parencite[Theorem~10.11]{Simon2017}) implies that for every seminorm \( \normDot_k \) there exists a seminorm \( \normDot_j \) and a constant \( C > 0 \) such that \( \norm{g \cdot x}_k \leq C \norm{x}_j \) holds for all \( g \in G \) and \( x \in X \).
	Hence,
	\begin{equation}
		\norm{x}^G_k = \int_G \norm{a \cdot x}_k \dif a \leq \int_G C \norm{x}_j \dif a = C \norm{x}_j \, .
	\end{equation}
	Similarly, for every seminorm \( \normDot_i \) there exists a seminorm \( \normDot_l \) and a constant \( D > 0 \) such that
	\begin{equation}
		\norm{x}_i = \norm{g^{-1} \cdot (g \cdot x)}_i \leq D \norm{g \cdot x}_l
	\end{equation}
	holds for all \( g \in G \) and \( x \in X \).
	Thus, we get \( \norm{x}_i \leq D \norm{x}^G_l \) and conclude that \( \normDot^G_k \) is a directed set of \( G \)-invariant seminorms equivalent to the original set of seminorms \( \normDot_k \).
	Hence,
	\begin{equation}
		d^G (x, y) \defeq \sum_{k=1}^\infty 2^{-k} \frac{\norm{x-y}^G_k}{1 + \norm{x-y}^G_k}
	\end{equation}
	is a \( G \)-invariant metric compatible with the original topology on \( X \). 
\end{proof}
We exploit the finite-dimensionality of the orbit and use the inverse function theorem of \textcite{Gloeckner2006a} to show that the orbits are submanifolds.
\begin{lemma}
	\label{prop:sliceTheorem:linearActionCompactGroup:orbitSubmanifold}
	Let \( X \) be a Fréchet space and let \( G \) be a compact Lie group that acts linearly and continuously on \( X \).
	Then, for every \( x \in X \), the orbit \( G \cdot x \) is an embedded submanifold of \( X \).
\end{lemma}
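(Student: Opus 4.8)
The plan is to realize the orbit locally as the zero set of the transversal coordinate of a well-chosen parametrization and to invoke Glöckner's inverse function theorem to upgrade this parametrization to a chart. First I would record the structural facts already at our disposal: since \( G \) is compact the action is proper, so by \cref{prop:lieGroupProperAction:properties} the orbit map descends to a homeomorphism \( \check\Upsilon_x: G \slash G_x \to G \cdot x \) onto the orbit equipped with the relative topology, and by \cref{prop:sliceTheorem:linearActionCompactGroup:principalSubgroup} the stabilizer \( G_x \) is a finite-dimensional principal Lie subgroup, so that \( G \slash G_x \) is a finite-dimensional manifold of some dimension \( n \). Because the left translations \( \Upsilon_g \) are diffeomorphisms of \( X \) that carry the orbit to itself, it suffices by equivariance to produce a single submanifold chart around \( x \); the charts at the remaining points of the orbit are then obtained by translation.

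Next I would identify the tangent model. For a linear action the differential \( \tangent_e \Upsilon_x: \LieA{g} \to X \), \( \xi \mapsto \xi \ldot x \), has kernel exactly \( \LieA{g}_x \): indeed \( \xi \ldot x = 0 \) forces \( \exp(t\xi) \cdot x = x \) for all \( t \), because the representation is linear. Hence \( \check\Upsilon_x \) is an injective immersion and its image \( V \defeq \LieA{g} \ldot x = \tangent_x(G \cdot x) \) is a finite-dimensional, hence closed, subspace of \( X \) of dimension \( n \). Since finite-dimensional subspaces of a Hausdorff locally convex space are topologically complemented, I may fix a closed complement \( Y \) with \( X = V \oplus Y \). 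Choosing a local section \( \chi: G \slash G_x \supseteq U \to G \) with \( \chi(\equivClass{e}) = e \), I would then define
\begin{equation}
	\Psi: U \times Y \to X, \qquad (\equivClass{g}, y) \mapsto \chi(\equivClass{g}) \cdot (x + y),
\end{equation}
which is smooth and satisfies \( \Psi(\equivClass{e}, 0) = x \) and \( \Psi(U \times \set{0}) = \chi(U) \cdot x \), the orbit near \( x \). A direct computation shows that \( \tangent_{(\equivClass{e},0)} \Psi \) is the map \( (\xi, y) \mapsto \xi \ldot x + y \), which is a topological isomorphism of \( (\LieA{g} \slash \LieA{g}_x) \oplus Y \) onto \( V \oplus Y = X \).

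At this point the decisive step is the inverse function theorem: I would invoke Glöckner's theorem to conclude that \( \Psi \) restricts to a diffeomorphism from an open neighborhood of \( (\equivClass{e}, 0) \) onto an open neighborhood of \( x \) in \( X \). This is exactly the regime in which Glöckner's result applies, since the potentially troublesome direction \( U \) is finite-dimensional while the complementary direction \( Y \) enters linearly; this finite-dimensionality is what compensates for the absence of a general inverse function theorem in the Fréchet category. The inverse of \( \Psi \) then furnishes a chart of \( X \) around \( x \) in which \( G \cdot x \) corresponds to \( U \times \set{0} \), exhibiting the desired submanifold chart. Combining these charts over the whole orbit with the fact that \( \check\Upsilon_x \) is a homeomorphism onto \( G \cdot x \) with the relative topology shows that \( G \cdot x \) is an embedded submanifold. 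I expect the verification of the precise hypotheses of Glöckner's inverse function theorem — in particular matching the required finite-dimensional-times-locally-convex structure of the domain and the invertibility of the differential — to be the main obstacle, whereas the existence of the complement \( Y \) and the topological embedding are routine.
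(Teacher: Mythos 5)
Your overall architecture — realize the orbit locally as \( \Psi(U \times \set{0}) \) for the tube map \( \Psi(\equivClass{g}, y) = \chi(\equivClass{g}) \cdot (x + y) \) and upgrade \( \Psi \) to a chart — is workable, but the decisive step is not justified as you state it. Glöckner's inverse function theorem (like the variants of Hiltunen and Teichmann) is \emph{not} a general ``invertible differential at a point implies local diffeomorphism'' statement: no such theorem exists in the Fréchet category, which is precisely why the paper needs special structure everywhere. The theorem only covers maps in which the block to be inverted is finite-dimensional (or Banach) while the genuinely Fréchet variable enters as a parameter, \ie maps that after splitting take the form \( (\xi, y) \mapsto h(\xi) + y \) with \( h \) between finite-dimensional spaces. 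Your \( \Psi \) is not of this form: writing \( \Psi(\equivClass{g}, y) = \chi(\equivClass{g}) \cdot x + \chi(\equivClass{g}) \cdot y \), the infinite-dimensional direction \( Y \) is hit by the \( \equivClass{g} \)-dependent operators \( \chi(\equivClass{g}) \), which in general do not preserve the splitting \( X = V \oplus Y \), so invertibility of \( \tangent_{(\equivClass{e},0)} \Psi \) alone does not place you in the regime of the theorem. The gap is repairable by exactly the factorization the paper performs for this very map in \cref{prop:sliceStratification:localAdditionAdapted}: precompose with the untwisting diffeomorphism \( \Phi(\equivClass{g}, y) = \bigl(\equivClass{g}, \chi(\equivClass{g})^{-1} \cdot y\bigr) \), so that \( \Psi \circ \Phi(\equivClass{g}, y) = \chi(\equivClass{g}) \cdot x + y \); after the further affine correction \( \lambda(\equivClass{g}, y) = \bigl(\equivClass{g}, y - \pr_Y(\chi(\equivClass{g}) \cdot x)\bigr) \) only the finite-dimensional component \( \equivClass{g} \mapsto \pr_V(\chi(\equivClass{g}) \cdot x) \) remains nonlinear, and there Glöckner's theorem does apply, exhibiting \( \Psi \) as a composition of local diffeomorphisms.

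You should also be aware that for the lemma actually at stake the paper takes a much softer route that avoids both the inverse function theorem and the choice of the complement \( Y \): it checks that \( \check\Upsilon_x \) has injective differential at the identity coset, invokes \parencite[Theorem~H]{Gloeckner2015} — for maps with \emph{finite-dimensional domain}, injectivity of the differential already implies immersion, which is the correct substitute for the missing IFT here — and then uses compactness of \( G \slash G_x \) together with injectivity to upgrade the immersion to a topological embedding, whence the orbit is an embedded submanifold. Your chart construction, once repaired as above, proves strictly more than the lemma (it is essentially the content of \cref{prop:sliceStratification:localAdditionAdapted}, namely that the affine local addition is adapted to the orbit), and your remaining bookkeeping — the kernel computation \( \ker \tangent_e \Upsilon_x = \LieA{g}_x \) via constancy of \( t \mapsto \exp(t\xi) \cdot x \), and the use of the homeomorphism \( \check\Upsilon_x \) to match the chart with the relative topology on the orbit — is sound; but as written, the appeal to Glöckner's theorem is the one step that fails.
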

\begin{proof}
	Let \( x \in X \). 
	Recall that the orbit map \( \Upsilon_x: G \to X \) descends to an injection \( \check{\Upsilon}_x: G \slash G_x \to X \).
	A straightforward calculation shows that \( \check{\Upsilon}_x \) has an injective differential at the identity coset (see \parencite[Theorem~I]{Gloeckner2015}).
	Since \( G \slash G_x \) is finite-dimensional, \parencite[Theorem~H]{Gloeckner2015} shows that injectivity of the differential is enough to conclude that \( \check{\Upsilon}_x \) is an immersion.
	Moreover, \( G \slash G_x \) is compact and thus \( \check{\Upsilon}_x \) is a topological embedding.
	Consequently, the orbit is an embedded submanifold of \( X \).
\end{proof}

In order to realize the normal bundle of the orbit \( G \cdot x \) as a subbundle of \( \TBundle X \), we need the following preliminary result about invariant complements.
\begin{lemma}
	\label{prop:sliceStratification:invariantComplementCompactGroup}
	Let \( G \) be a compact Lie group which acts linearly and continuously on a Fréchet\footnotetext{The statement holds also in the slightly more general setting when \( X \) is a Mackey complete locally convex vector space.} space \( X \).
	Let \( H \) be a Lie subgroup of \( G \).
	Then, every closed \( H \)-invariant topologically complemented subspace \( E \subseteq X \) admits an \( H \)-invariant complement.
	In particular, for every \( x \in X \), the infinitesimal orbit \( \LieA{g} \ldot x \subseteq X \) has a \( G_x \)-invariant complement.
\end{lemma}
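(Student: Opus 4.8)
The plan is to produce the invariant complement by averaging an arbitrary topological projection onto $E$ over the group, so that the averaged projection becomes equivariant and its kernel is the desired complement. First I would reduce to the case that $H$ is compact. Since $G$ is a compact, hence finite-dimensional, Lie group by \cref{prop:sliceTheorem:linearActionCompactGroup:principalSubgroup}, the closure $\bar{H}$ of $H$ in $G$ is a closed subgroup of a compact group, hence compact. Because $E$ is closed and the action is continuous, $E$ is invariant under $\bar{H}$ as well, and any $\bar{H}$-invariant complement is a fortiori $H$-invariant. Thus I may assume without loss of generality that $H$ is compact and equip it with its normalized Haar measure $\dif h$.

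Since $E$ is topologically complemented, there is a continuous linear projection $P \colon X \to X$ with image $E$ and $P\restr{}{E} = \id_E$. I would then define the averaged map
\[
	\bar{P}(x) \defeq \int_H h \cdot P(h^{-1} \cdot x) \dif h, \qquad x \in X.
\]
The integrand $h \mapsto h \cdot P(h^{-1} \cdot x)$ is a continuous map from the compact group $H$ into $X$, so the vector-valued integral exists because $X$ is complete (Mackey completeness suffices for the more general statement of the footnote). Continuity of $\bar{P}$ follows from seminorm estimates exactly as in the proof of \cref{prop:sliceTheorem:linearActionCompactGroup:invariantMetric}: one bounds $\norm{\bar{P}(x)}_k \leq \int_H \norm{h \cdot P(h^{-1} \cdot x)}_k \dif h$ and then uses the uniform boundedness principle together with continuity of $P$ to estimate the integrand by a constant multiple of a single seminorm of $x$, uniformly in $h$.

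To see that $\bar{P}$ is a projection onto $E$, note that $P(h^{-1} \cdot x) \in E$ and $E$ is $H$-invariant and closed, so $\bar{P}(x) \in E$; and if $x \in E$, then $h^{-1} \cdot x \in E$, whence $P(h^{-1} \cdot x) = h^{-1} \cdot x$ and the integrand equals $x$, giving $\bar{P}(x) = x$. Hence $\bar{P}$ has image $E$ and satisfies $\bar{P}^2 = \bar{P}$. Equivariance comes from the invariance of the Haar measure: substituting $h \mapsto g h$ in the integral defining $\bar{P}(g \cdot x)$ and pulling the continuous linear operator $g \cdot (\,\cdot\,)$ out of the integral yields $\bar{P}(g \cdot x) = g \cdot \bar{P}(x)$ for all $g \in H$. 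Consequently $\ker \bar{P}$ is a closed, $H$-invariant complement of $E$.

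For the final assertion I would apply this to $H = G_x$ and $E = \LieA{g} \ldot x$. The latter is the image of the continuous linear map $\LieA{g} \to X$, $A \mapsto A \ldot x$; since $\LieA{g}$ is finite-dimensional, $\LieA{g} \ldot x$ is a finite-dimensional, hence closed and topologically complemented, subspace of $X$. It is $G_x$-invariant because $g \cdot (A \ldot x) = (\mathrm{Ad}_g A) \ldot (g \cdot x) = (\mathrm{Ad}_g A) \ldot x$ for $g \in G_x$, which again lies in $\LieA{g} \ldot x$. The first part then supplies the $G_x$-invariant complement. The main obstacle, absent in the Hilbert setting where one simply takes orthogonal complements, is ensuring that the averaging stays within the category of \emph{continuous} linear maps: this requires both the existence of the vector-valued integral, for which completeness of $X$ is used, and the uniform seminorm bounds furnished by the uniform boundedness principle.
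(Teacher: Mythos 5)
Your proposal is correct and takes essentially the same route as the paper's proof: averaging an arbitrary continuous projection onto \( E \) over the normalized Haar measure of \( H \) to obtain an \( H \)-equivariant continuous projection whose kernel is the invariant complement, with the second claim reduced to the finite-dimensionality (hence closedness and complementedness) of \( \LieA{g} \ldot x \). Your extra steps---passing to the closure \( \bar{H} \) to secure compactness, the uniform-boundedness argument for continuity of the averaged projection, and the explicit \( G_x \)-invariance check for \( \LieA{g} \ldot x \)---merely spell out details the paper treats as immediate (it simply notes that \( H \) is compact, a Lie subgroup of the compact group \( G \) being closed).
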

\begin{proof}
	Since \( E \) is complemented, there exists a projection operator onto \( E \), \ie, a continuous left inverse \( \epsilon: X \to E \) of the inclusion \( E \toInject X \).
	Define \( \epsilon_H: X \to E \) as the average 
	\begin{equation}
		\epsilon_H (x) \defeq \int_H h \cdot \epsilon(h^{-1} \cdot x) \, \dif h,
	\end{equation}
	where \( \dif h \) is the normalized Haar measure on \( H \) (note that \( H \) is compact).
	Since \( X \) is complete, the integral exists.
	As \( E \) is \( H \)-invariant, \( \epsilon_H \) takes values in \( E \), indeed.
	Moreover, using continuity of the action, it is easy to see that \( \epsilon_H \) is a continuous linear operator, which is \( H \)-equivariant by construction.
	For all \( e \in E \), we have \( h \cdot \epsilon(h^{-1} \cdot e) = e \) and, thus, \( \epsilon_H \) is an \( H \)-equivariant continuous left inverse of the inclusion \( E \toInject X \). 
	Hence, \( E \) is topologically complemented by the \( H \)-invariant subspace \( \ker \epsilon_H \). 
	The second claim concerning the \( G_x \)-invariant complement of \( \LieA{g} \ldot x \) follows directly from the fact that \( \LieA{g} \ldot x \) is finite-dimensional and that every finite-dimensional subspace is topologically complemented.
\end{proof}
By the above lemma, there exists a \( G_x \)-invariant complement \( Z \) of \( \LieA{g} \ldot x \) in \( X \) for all \( x \in X \).
Setting \( \NBundle_{g \cdot x} (G \cdot x) = g \cdot Z \) yields a representation of the normal bundle of \( G \cdot x \) as a subbundle of \( \TBundle X \isomorph X \times X \).

By \cref{ex:slice:localAddition:linear}, the map
\begin{equation}
	\eta: X \times X \to X, \quad (x, v) \mapsto x + v 
\end{equation}
is a local addition.
The next lemma implies that \( \eta \) is adapted to the submanifold \( G \cdot x \) in the sense of \cref{defn:slice:localAddition:adapted}.
\begin{lemma}
	\label{prop:sliceStratification:localAdditionAdapted}
	Let \( G \) be compact Lie group which acts linearly and continuously on a Fréchet space \( X \).
	For \( x \in X \), let \( Z \) be a \( G_x \)-invariant complement of \( \LieA{g} \ldot x \) in \( X \).
	Then, the map
	\begin{equation}
		\restr{\eta}{\NBundle (G \cdot x)}:  \NBundle (G \cdot x) \to X, \quad (g \cdot x, g \cdot z) \mapsto g \cdot (x + z) 
	\end{equation}
	is a local diffeomorphism at \( (x, 0) \in \NBundle (G \cdot x) \).
\end{lemma}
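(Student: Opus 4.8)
The plan is to reduce the statement to an infinitesimal computation combined with an inverse function theorem, exploiting that the orbit is finite-dimensional. First I would choose a local section \( \chi \colon G \slash G_x \supseteq W \to G \) of the principal bundle \( G \to G \slash G_x \), defined on an open neighborhood \( W \) of the identity coset and normalized so that \( \chi(\equivClass{e}) = e \). It induces a trivialization \( \Phi \colon W \times Z \to \NBundle (G \cdot x) \), \( (\equivClass{g}, z) \mapsto (\chi(\equivClass{g}) \cdot x, \chi(\equivClass{g}) \cdot z) \), which is a diffeomorphism onto an open neighborhood of \( (x, 0) \). Since \( \restr{\eta}{\NBundle (G \cdot x)} \circ \Phi = F \) with \( F(\equivClass{g}, z) = \chi(\equivClass{g}) \cdot (x + z) \), it suffices to show that \( F \) is a local diffeomorphism at \( (\equivClass{e}, 0) \).

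Next I would compute the differential of \( F \) at \( (\equivClass{e}, 0) \). Using \( \chi(\equivClass{e}) = e \), the linearity of the action and the relation \( \Upsilon_x \circ \chi = \check\Upsilon_x \), one finds
\[
	\tangent_{(\equivClass{e}, 0)} F (\xi, w) = \tangent_{\equivClass{e}} \check\Upsilon_x (\xi) + w, \qquad \xi \in \TBundle_{\equivClass{e}}(G \slash G_x), \ w \in Z .
\]
By \cref{prop:sliceTheorem:linearActionCompactGroup:orbitSubmanifold}, the map \( \check\Upsilon_x \) is an immersion, so \( \tangent_{\equivClass{e}} \check\Upsilon_x \) is injective; its image is the tangent space \( \LieA{g} \ldot x \) of the orbit at \( x \), whence \( \tangent_{\equivClass{e}} \check\Upsilon_x \) is a topological isomorphism onto \( \LieA{g} \ldot x \). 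Since \( Z \) is a complement of \( \LieA{g} \ldot x \), the splitting \( X = (\LieA{g} \ldot x) \oplus Z \) shows that \( \tangent_{(\equivClass{e}, 0)} F \) is a topological linear isomorphism from \( \TBundle_{\equivClass{e}}(G \slash G_x) \times Z \) onto \( X \).

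Finally I would invoke an inverse function theorem to pass from invertibility of the differential to a local diffeomorphism, and this is where the main obstacle lies: no inverse function theorem is available for general Fréchet spaces, so the classical argument cannot be applied to \( F \) directly. However, both the domain \( \TBundle_{\equivClass{e}}(G \slash G_x) \times Z \) and, via the isomorphism just established, the codomain \( X \isomorph (\LieA{g} \ldot x) \times Z \) are products of the finite-dimensional space \( \LieA{g} \ldot x \isomorph \TBundle_{\equivClass{e}}(G \slash G_x) \) with the Fréchet space \( Z \). This finite-dimensionality of the orbit directions---ultimately a consequence of the compactness of \( G \) via \cref{prop:sliceTheorem:linearActionCompactGroup:principalSubgroup}---is exactly the hypothesis needed to apply the inverse function theorem of \textcite{Gloeckner2006a}. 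It yields that \( F \) restricts to a diffeomorphism from an open neighborhood of \( (\equivClass{e}, 0) \) onto an open neighborhood of \( x \) in \( X \); consequently \( \restr{\eta}{\NBundle (G \cdot x)} \) is a local diffeomorphism at \( (x, 0) \), as claimed.
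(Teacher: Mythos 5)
Your trivialization of \( \NBundle(G \cdot x) \) and the computation of the differential \( \tangent_{(\equivClass{e},0)}F(\xi, w) = \tangent_{\equivClass{e}}\check\Upsilon_x(\xi) + w \) are correct, and they match the infinitesimal content of the paper's argument. The gap is in the final step. The inverse function theorem of \textcite{Gloeckner2006a} is \emph{not} a statement about arbitrary smooth maps between spaces of the form \( \R^n \times Z \) whose differential is invertible at a single point; it is an inverse function theorem \emph{with parameters}, applicable to maps of the fibered form \( (p, v) \mapsto \bigl(p, f(p, v)\bigr) \), where \( v \) ranges in a Banach (here finite-dimensional) space, \( p \) is a parameter in an arbitrary locally convex space which the map carries along unchanged, and only the partial differential in \( v \) is inverted. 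If your reading were correct, then taking the finite-dimensional factor to be trivial would yield the classical inverse function theorem for arbitrary Fréchet spaces, which is false: there are smooth maps of a Fréchet space to itself with derivative the identity at \( 0 \) that are injective on no neighborhood of \( 0 \), see the counterexamples in \parencite[Section~I.5.5]{Hamilton1982}. Your map \( F(\xi, z) = \chi(\xi) \cdot (x + z) \) is not of the required fibered form, because its \( Z \)-component \( \pr_Z\bigl(\chi(\xi) \cdot (x + z)\bigr) \) depends jointly on both variables (the coefficient of \( z \) is the \( \xi \)-dependent operator \( \chi(\xi) \)), so Glöckner's theorem cannot be applied to \( F \) directly, and invertibility of \( \tangent F \) at the single point \( (\equivClass{e}, 0) \) does not by itself deliver a local diffeomorphism.

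This missing reduction is precisely the content of the paper's proof. There, one first conjugates \( F \) into the admissible normal form by explicit diffeomorphisms: composing with \( \Phi(\xi, z) = \bigl(\xi, \chi(\xi)^{-1} \cdot z\bigr) \) straightens the group action in the \( Z \)-direction, giving \( F \circ \Phi(\xi, z) = \chi(\xi) \cdot x + z \), and the shear \( \lambda(\xi, z) = \bigl(\xi, z - \pr_Z(\chi(\xi) \cdot x)\bigr) \) removes the remaining \( \xi \)-dependent translation in \( Z \). What is left is \( \psi(\xi, z) = \pr_{\LieA{m}}\bigl(\chi(\xi) \cdot x\bigr) + z \), whose nonlinearity is confined to the finite-dimensional \( \LieA{m} \)-component while it is the identity in \( z \); only to this map does Glöckner's theorem apply (indeed, here it is essentially the finite-dimensional inverse function theorem crossed with \( \id_Z \)), and the identity \( \lambda \circ \psi^{-1} \circ F \circ \Phi = \id \) then shows that \( F \), and hence \( \restr{\eta}{\NBundle (G \cdot x)} \), is a local diffeomorphism. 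The affine dependence of \( F \) on \( z \) is what makes this conjugation possible, but it must actually be carried out; your differential computation alone does not close the argument.
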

\begin{proof}
	Choose a \( G_x \)-invariant complement \( \LieA{m} \) of \( \LieA{g}_x \) in \( \LieA{g} \).
	In the sequel, we will identify \( \LieA{g} \ldot x \) with \( \LieA{m} \) and view the latter as a subspace of \( X \) without any further mentioning.
	In particular, \( X \isomorph \LieA{m} \oplus Z \).
	The corresponding projection onto \( \LieA{m} \) will be denoted by \( \pr_{\LieA{m}} \).
	Let \( \kappa: G \slash G_x \supseteq U \to V \subseteq \LieA{m} \) be a chart at the identity coset and let \( \chi: U \to G \) be a local section of \( G \to G \slash G_x \).
	By a slight abuse of notation, we will denote the chart representation \( \chi \circ \kappa^{-1}: \LieA{m} \supseteq V \to G \) by \( \chi \) as well.
	Without loss of generality, we may assume that \( \chi(0) = e \) and that \( \tangent_0 \chi: \LieA{m} \to \LieA{g} \) is the inclusion.
	Clearly, the map
	\begin{equation}
		V \times Z \to \NBundle (G \cdot x), \qquad (\xi, z) \mapsto (\chi(\xi) \cdot x, \chi(\xi) \cdot z)
	\end{equation}
	yields a local trivialization of \( \NBundle (G \cdot x) \) in a neighborhood of \( x \).
	With respect to this local trivialization, the map \( \restr{\eta}{\NBundle (G \cdot x)} \) takes the form
	\begin{equation}
		f: V \times Z \to X, \qquad (\xi, z) \mapsto \chi(\xi) \cdot (x + z)
	\end{equation}
	Since the derivative at \( (0, 0) \) of the map
	\begin{equation}
		V \times Z \to \LieA{m}, \qquad (\xi, z) \mapsto \pr_{\LieA{m}} (\chi(\xi) \cdot x)
	\end{equation}
	 with respect to the first variable is the identity on \( \LieA{m} \) and \( \LieA{m} \) is finite-dimensional, the inverse function theorem of \textcite{Gloeckner2006a} shows that the map
	\begin{equation}
		\psi: V \times Z \to X, \qquad (\xi, z) \mapsto {\pr_{\LieA{m}} \bigl(\chi(\xi) \cdot x\bigl)} + z
	\end{equation}
	is a local diffeomorphism at \( (0, 0) \).
	Furthermore, the map \( \lambda: V \times Z \to V \times Z \) defined by \( \lambda(\xi, z) = \bigl(\xi, z - \pr_Z(\chi(\xi) \cdot x)\bigl) \) is a diffeomorphism.
	Let \( \Psi = \lambda \circ \psi^{-1} \), where restriction to appropriate open neighborhoods is understood.
	Moreover, the map
	\begin{equation}
		\Phi: V \times Z \to V \times Z, \qquad (\xi, z) \mapsto (\xi, \chi(\xi)^{-1} \cdot z)
	\end{equation}
	is a diffeomorphism with inverse given by \( \Phi^{-1}(\xi, z) = (\xi, \chi(\xi) \cdot z) \).
	Now, the calculation
	\begin{equation}\begin{split}
		\Psi \circ f \circ \Phi \bigl(\xi, z\bigr)
			&= \Psi \circ f \bigl(\xi, \chi(\xi)^{-1} \cdot z\bigr)
			\\
			&= \lambda \circ \psi^{-1} \bigl(\chi(\xi) \cdot x + z\bigr)
			\\
			&= \lambda \bigl(\xi, \pr_Z (\chi(\xi) \cdot x) + z \bigr)
			\\
			&= \bigl(\xi, z\bigr).
	\end{split}\end{equation}
	shows that \( f \), and hence also \( \restr{\eta}{\NBundle (G \cdot x)} \), is a local diffeomorphism.
\end{proof}

\Cref{prop:sliceTheorem:linearActionCompactGroup:principalSubgroup,prop:sliceTheorem:linearActionCompactGroup:invariantMetric,prop:sliceTheorem:linearActionCompactGroup:orbitSubmanifold,prop:sliceStratification:invariantComplementCompactGroup,prop:sliceStratification:localAdditionAdapted} verify the assumptions of \cref{prop:slice:sliceTheoremGeneral} for the case under consideration and we thus obtain the following slice theorem.
\begin{thm}[Linear slice theorem]
	\label{prop:slice:sliceTheoremLinearAction}
	Let \( X \) be Fréchet space and let \( G \) be a compact Lie group that acts linearly and continuously on \( X \).
	Then, there exists a linear slice at every point.
\end{thm}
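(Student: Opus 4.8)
The plan is to deduce the claim directly from the general slice \cref{prop:slice:sliceTheoremGeneral}, applied at an arbitrary point \( m = x \in X \), by verifying each of its four hypotheses from the lemmas prepared above. First, since \( G \) is compact, every sequence in \( G \) has a convergent subsequence, so the action is proper by \cref{prop:lieGroupAction:proper:sequenceMetric}, as already noted at the beginning of this section. Hypothesis~\iref{i:slice:sliceTheoremGeneral:stabilizer}, that the stabilizer \( G_x \) be a principal Lie subgroup, is exactly the content of \cref{prop:sliceTheorem:linearActionCompactGroup:principalSubgroup}.

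Next I would address hypotheses~\iref{i:slice:sliceTheoremGeneral:orbit} and~\iref{i:slice:sliceTheoremGeneral:localAddition}. By \cref{prop:sliceTheorem:linearActionCompactGroup:orbitSubmanifold} the orbit \( G \cdot x \) is an embedded, hence locally closed, submanifold of \( X \). To realize its normal bundle as a \( G \)-invariant subbundle of \( \TBundle X \isomorph X \times X \), I would invoke \cref{prop:sliceStratification:invariantComplementCompactGroup} to choose a \( G_x \)-invariant complement \( Z \) of the finite-dimensional infinitesimal orbit \( \LieA{g} \ldot x \) in \( X \) and set \( \NBundle_{g \cdot x}(G \cdot x) \defeq g \cdot Z \); \( G \)-invariance of the resulting subbundle is immediate from the construction. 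For the local addition I would take the affine map \( \eta(x, v) = x + v \) of \cref{ex:slice:localAddition:linear}, which is \( G \)-equivariant precisely because the action is linear, and which is adapted to \( G \cdot x \) in the sense of \cref{defn:slice:localAddition:adapted} by \cref{prop:sliceStratification:localAdditionAdapted}.

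It remains to supply the metrics of hypothesis~\iref{i:slice:sliceTheoremGeneral:metric}. A \( G \)-invariant topological metric \( d \) on \( X \) compatible with the Fréchet topology is furnished by \cref{prop:sliceTheorem:linearActionCompactGroup:invariantMetric}. The one point deserving a small remark is the \( G \)-invariant fiber metric \( \rho \) on \( \TBundle X \): under the identification \( \TBundle X \isomorph X \times X \) the tangent-lifted action of \( G \) acts on each fiber by the \emph{same} linear representation, since the derivative of a linear map is that map itself. Hence applying \cref{prop:sliceTheorem:linearActionCompactGroup:invariantMetric} once more to \( X \) yields a \( G \)-invariant metric on the fibers, which serves as \( \rho \). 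With all four hypotheses in place, \cref{prop:slice:sliceTheoremGeneral} produces a linear slice at \( x \), and as \( x \) was arbitrary the theorem follows.

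As for difficulty, the substantive analytic work has already been absorbed into the lemmas; in particular the adaptedness \cref{prop:sliceStratification:localAdditionAdapted} rests on Glöckner's inverse function theorem and is the deepest ingredient. Within the present assembly the only genuine care lies in checking \( G \)-equivariance of the affine local addition, which fails without linearity of the action, and in producing the fiber metric \( \rho \) from the base construction; everything else is a direct citation of the preceding lemmas.
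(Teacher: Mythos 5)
Your proposal is correct and follows essentially the same route as the paper, which likewise proves the theorem by verifying the four hypotheses of \cref{prop:slice:sliceTheoremGeneral} through exactly the lemmas you cite: properness from compactness of \( G \), \cref{prop:sliceTheorem:linearActionCompactGroup:principalSubgroup} for the stabilizer, \cref{prop:sliceTheorem:linearActionCompactGroup:orbitSubmanifold} and \cref{prop:sliceStratification:invariantComplementCompactGroup} for the orbit and its invariant normal bundle \( \NBundle_{g \cdot x}(G \cdot x) = g \cdot Z \), the affine local addition of \cref{ex:slice:localAddition:linear} made adapted by \cref{prop:sliceStratification:localAdditionAdapted}, and \cref{prop:sliceTheorem:linearActionCompactGroup:invariantMetric} for the metrics. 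Your extra remark that the fiber metric \( \rho \) on \( \TBundle X \isomorph X \times X \) comes from applying the invariant-metric lemma to the same linear representation (the derivative of a linear action being the action itself) simply makes explicit a point the paper leaves implicit when it asserts invariant metrics on both \( X \) and \( \TBundle X \).
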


\subsection{Slice theorem for nonlinear tame Fréchet actions}

In the previous section, we have seen that a linear action of a compact Lie group on a Fréchet space always admits a slice.
One of the main ingredients was the finite-dimensionality of the group, which allowed us to use the inverse function theorem of Glöckner.
We now consider the more general situation of nonlinear actions of infinite-dimensional Fréchet Lie groups on Fréchet manifolds in the tame category, that is, we consider tame Fréchet Lie group actions \( (M, G, \Upsilon) \).
In this setting, we are going to verify the assumptions of the general slice \cref{prop:slice:sliceTheoremGeneral} as follows:
\begin{enumerate}[align=left]
	\item
		The condition that the stabilizer \( G_m \) is a principal Lie subgroup remains and needs to be checked in each example separately.
	\item
		To show that the orbits are submanifolds, we use the general inverse function theorem of Nash and Moser formulated in the tame category by \textcite{Hamilton1982}.
		This will reduce assumption (ii) to the requirement that the linearization \( \tangent_e \Upsilon_m: \LieA{g} \to \TBundle_m M \) of the action has a tame (generalized) inverse.
		We emphasize that the existence of an inverse is required only at a single point.
		The family of inverses needed for the Nash--Moser inverse function theorem is obtained by left translation on the group \( G \).
	\item[(iii) and (iv)]
		These assumptions will be verified in the context of infinite-dimen\-sional Riemannian structures on \( M \).
		More precisely, in order for the topology induced by the Riemannian structure on each tangent space to be equivalent to the original Fréchet topology, we invent the notion of a graded Riemannian metric which consists of a family of Riemannian metrics.
		Then, the invariant topological metric on \( M \) is constructed via the path-length distance.
		Moreover, the local addition is obtained via the exponential map of the Riemannian structure.
		However, the existence of the latter is plagued by a number of obstacles, which we are going to discuss below.
\end{enumerate}

For the convenience of the reader, let us recall the main notions of the tame Fréchet category, see \parencite[Part~II]{Hamilton1982} for details.
A Fréchet space \( X \), whose topology is generated by a family \( \normDot_k \) of seminorms, is called graded if \( \normDot_k \leq \normDot_{k+1} \) holds for all \( k \in \N \).
A graded Fréchet space is called \emphDef{tame} if the seminorms satisfy an additional interpolation property, which formalizes the idea that \( X \) admits smoothing operators, see \parencite[Definition~II.1.3.2]{Hamilton1982}.
Let \( X \) and \( Y \) be tame Fréchet spaces. 
A continuous (possibly non-linear) map \( P: X \supseteq U \to Y \) defined on an open subset \( U \subseteq X \) is called \emphDef{tame} if an estimate of the form
\begin{equation}
	\norm{P(x)}_{k} \leq C (1 + \norm{x}_{k+r})
\end{equation}
holds for some \( r \in \N \) in a neighborhood of every point.
Moreover, \( P \) is called \emphDef{tame smooth} if it is smooth and all derivatives \( \dif^{(j)} P: U \times X^j \to Y \) are tame maps.
The notion of a tame manifold is defined in an obvious way.

\subsubsection{Manifold structure of orbits}
We will now discuss under which conditions the orbits are embedded submanifolds.
For this purpose, we we will use the Nash--Moser inverse function theorem in the tame Fréchet category as presented in \parencite{Hamilton1982}.
Recall that, in contrast to its cousin in the Banach setting, this version of the inverse function theorem needs the derivative to be invertible in a whole neighborhood instead of only at a single point.
It is convenient to express this requirement in a differential geometric language using sequences of vector bundles.
Thus, for the moment, consider the abstract setting given by a sequence of vertical vector bundle morphisms
\begin{equationcd}
	E \ar[r, "\phi"] & F \ar[r, "\psi"] & L
\end{equationcd}
over \( M \).
Such a sequence is called \emphDef{exact} at \( F \) if \( \img \phi = \ker \psi \).
If, in addition, the spaces \( \ker \phi_m \), \( \img \phi_m = \ker \psi_m \) and \( \img \psi_m \) are complemented in the fibers \( E_m \), \( F_m \) and \( L_m \), respectively, for all \( m \in M \), then the sequence is called \emphDef{fiber exact}.
Finally, we say that an exact sequence is \emphDef{bundle exact} at \( F \) if \( \ker \phi \), \( \img \phi = \ker \psi \) and \( \img \psi \) are vertical split subbundles\footnote{A subset \( F \subseteq E \) of a vector bundle \( \pi: E \to M \) is called a vertical subbundle if for every point \( f \in F \) there exists a local trivialization \( \tau: \pi^{-1} (U) \to U \times \FibreBundleModel{E} \) of \( E \) at \( \pi(f) \) and a closed subspace \( \FibreBundleModel{F}_U \subseteq \FibreBundleModel{E} \) such that \( \tau( \pi^{-1}(U) \cap F ) = U \times \FibreBundleModel{F}_U \). If all the subspaces \( \FibreBundleModel{F}_U \subseteq \FibreBundleModel{E} \) are complemented, then \( F \) is called a vertical split subbundle.} of \( E \), \( F \) and \( L \), respectively.
An exact sequence \emphDef{splits (tamely)} at \( F \) if there exist (tame) smooth vertical bundle morphisms \( \rho: F \to E \) and \( \lambda: L \to F \) with \( \phi \circ \rho + \lambda \circ \psi = \id_F \).
Finally, a linear tame map \( T: X \to Y \) between tame Fréchet spaces is called \emphDef{tame regular} if the kernel and the image of \( T \) are tamely complemented, that is, 
\begin{equation}
	X = \ker T \oplus \coimg T, \qquad Y = \coker T \oplus \img T,
\end{equation}
and the map \( \pr_{\img T} \circ \restr{T}{\coimg T}: \coimg T \to \img T \) is a tame isomorphism.
Equivalently, there exists a tame generalized inverse of \( T \), that is, a tame linear map \( S: Y \to X \) satisfying \( T \circ S \circ T = T \). 
As a consequence of \parencite[Theorem~II.3.3.3]{Hamilton1982}, elliptic differential operators are tame regular.
We refer the reader to \parencite{DiezThesis} for further details concerning the notion of tame regularity.

Using this terminology, the Nash--Moser theorem yields the following generalization of the well-known fact that smooth maps with injective differentials are locally injections.
\begin{prop}[\textnormal{\cf \parencite[Corollary on p.~542]{AbbatiCirelliEtAl1989}}]
	\label{prop:submanifolds:immersion:byNashMoser}
	Let \( f: M \to N \) be a tame smooth map between tame Fréchet manifolds.
	If the sequence of vector bundles
	\begin{equationcd}[label=eq::submanifolds:immersion:exactSequence]
		0 \to{r} & \TBundle M \to{r}{\tangent f} & f^* \TBundle N
	\end{equationcd}
	over \( M \) is tamely split bundle exact in a neighborhood of \( m \in M \), then \( f \) is a tame immersion at \( m \).
\end{prop}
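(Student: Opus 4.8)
The plan is to promote the fibrewise injectivity of \( \tangent f \) to a genuine local normal form by ``fattening'' \( f \) with the complementary subbundle furnished by the tame splitting and then inverting the resulting map with the Nash--Moser theorem. Since all relevant notions are local, I would first pass to tame charts around \( m \) and \( f(m) \), so that without loss of generality \( M \) is an open neighborhood \( W \) of \( 0 \) in a tame Fréchet space \( X \), \( N \) is a tame Fréchet space \( Y \), \( m = 0 \) and \( f(0) = 0 \). In these charts \( \TBundle M \isomorph W \times X \) and \( f^* \TBundle N \isomorph W \times Y \), and the hypothesis translates into the following data on \( W \) (after shrinking): the derivative \( \tangent_x f : X \to Y \) is injective for every \( x \in W \), and the sequence splits tamely, which amounts to a tame smooth family of operators \( \lambda(x): Y \to X \) with \( \lambda(x) \circ \tangent_x f = \id_X \). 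Setting \( P_x \defeq \tangent_x f \circ \lambda(x) \) yields a tame smooth family of projections onto \( \img \tangent_x f \), so that \( Y = \img \tangent_x f \oplus C_x \) with \( C_x \defeq \ker P_x \), and the split subbundle \( C \) can be tamely trivialized over a smaller neighborhood by isomorphisms \( \theta_x : Z \to C_x \) onto a fixed model \( Z \), with \( \theta_0 \) the inclusion of \( Z = C_0 \).

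Next I would introduce the fattened map
\begin{equation}
	g: W \times Z \to Y, \qquad g(x,z) \defeq f(x) + \theta_x(z),
\end{equation}
which is tame smooth and satisfies \( g(x,0) = f(x) \). Along the zero section its derivative is
\begin{equation}
	\tangent_{(x,0)} g\, (u,w) = \tangent_x f\,(u) + \theta_x(w),
\end{equation}
and this is a tame isomorphism \( X \oplus Z \to Y \) for every \( x \in W \), the inverse being \( y \mapsto \bigl(\lambda(x) y,\, \theta_x^{-1}(y - P_x y)\bigr) \), which is tame because \( \lambda \), \( \theta^{-1} \) and \( P \) are. Decomposing the general derivative along \( Y = \img \tangent_x f \oplus C_x \) shows that for arbitrary \( (x,z) \) one has \( \tangent_{(x,z)} g = \tangent_{(x,0)} g \circ T(x,z) \), where \( T(x,z) \) is the lower-triangular operator \( (u,w) \mapsto \bigl((\id_X + A(x,z)) u,\ w + D(x,z) u\bigr) \) with \( A(x,z) \) and \( D(x,z) \) tame smooth in \( (x,z) \) and vanishing at \( z = 0 \). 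Hence invertibility of \( \tangent_{(x,z)} g \) reduces to invertibility of \( \id_X + A(x,z) \) on \( X \).

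The main obstacle is precisely this last point: in contrast to the Banach setting, where \( \id_X + A(x,z) \) is automatically invertible once \( A(x,z) \) is small, a small tame perturbation of the identity need not be invertible in a Fréchet space, and the Nash--Moser theorem requires the derivative of \( g \) to possess a tame family of inverses throughout a whole neighborhood of \( (0,0) \), not merely at the point. This is exactly why the hypothesis demands tame split bundle exactness on a neighborhood of \( m \) rather than only at \( m \): the family \( \lambda(x), \theta_x \) is available for all \( x \in W \), which secures invertibility of \( \tangent_{(x,0)} g \) for every \( x \) and, after shrinking \( W \) and the \( z \)-range and exploiting the tame estimates on \( A \) and \( D \), yields a tame smooth family of inverses of \( \tangent_{(x,z)} g \) on a neighborhood of \( (0,0) \). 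I would carry out this verification in the tame category, using the smoothing operators of the tame structure to control the inversion, following \parencite[Corollary on p.~542]{AbbatiCirelliEtAl1989}.

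With neighborhood invertibility in hand, Hamilton's Nash--Moser inverse function theorem \parencite[Theorem~III.1.1.1]{Hamilton1982} shows that \( g \) is a tame diffeomorphism from a neighborhood of \( (0,0) \) onto a neighborhood \( V \) of \( 0 \) in \( Y \). Writing \( \Phi \defeq g^{-1} \), a tame chart of \( N \) at \( f(m) \), one reads off
\begin{equation}
	\Phi\bigl(f(x)\bigr) = g^{-1}\bigl(g(x,0)\bigr) = (x,0),
\end{equation}
so that in the charts constructed above \( f \) is the tame split inclusion \( x \mapsto (x,0) \). This is the desired local normal form, and hence \( f \) is a tame immersion at \( m \).
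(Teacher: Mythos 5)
The paper offers no proof of this proposition: it is imported verbatim from the literature, with the reference \parencite[Corollary on p.~542]{AbbatiCirelliEtAl1989} standing in for the argument. So your proposal can only be measured against that standard argument, and in outline you reproduce it faithfully: pass to charts, fatten \( f \) to \( g(x,z) = f(x) + \theta_x(z) \) using a tame trivialization \( \theta_x \) of the complementary subbundle, observe that \( \tangent_{(x,0)} g \) is a tame isomorphism for every \( x \), and aim to apply the Nash--Moser theorem to \( g \). Your reduction of the invertibility of \( \tangent_{(x,z)} g \) to that of \( \id_X + A(x,z) \) via the triangular factor \( T(x,z) \) is also correct, and you deserve credit for identifying this as the crux. (A smaller remark: your trivialization \( \theta_x \) should be extracted from the bundle-exactness hypothesis --- an adapted trivialization \( \tau_x \) of \( f^* \TBundle N \) straightening \( \img \tangent f \) to a fixed subspace \( Y_1 \), after which \( \theta_x \defeq (\id - P_x) \circ \tau_x^{-1} \) restricted to a complement \( Z \) of \( Y_1 \) works, with tame inverse the \( Z \)-component of \( \tau_x \) on \( C_x \). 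Deriving it from the projections \( P_x \) alone, say as \( (\id - P_x) \) restricted to \( C_0 \), runs into exactly the near-identity inversion problem you flag later.)

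The genuine gap is your resolution of that crux. You claim that, after shrinking and ``exploiting the tame estimates'' and ``using the smoothing operators of the tame structure to control the inversion'', the operators \( \id_X + A(x,z) \) become invertible for small \( z \). No such principle exists: in a graded Fréchet space there is no Neumann series, and an arbitrarily small tame linear perturbation of the identity can fail to be invertible even when smoothing operators are available. Concretely, on the tame Fréchet space \( s \) of rapidly decreasing sequences, the operator \( L \) defined by \( (Lx)_n \defeq n\, x_{n+1} \) is tame of degree \( 1 \), yet for every \( t \neq 0 \) the operator \( \id + tL \) has nontrivial kernel, spanned by the rapidly decreasing sequence \( x_0 = 0 \), \( x_n = (-1)^{n-1} t^{-(n-1)} / (n-1)! \) for \( n \geq 1 \). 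This is precisely why Hamilton's theorem \parencite[Theorem~III.1.1.1]{Hamilton1982} hypothesizes a smooth tame \emph{family} of inverses on a whole open set: your hypotheses (the splitting \( \lambda(x) \) for \( x \in W \)) deliver invertibility of \( \tangent g \) only along the zero section \( \set{z = 0} \), which is not an open subset of \( W \times Z \), and smallness of \( A(x,z) \) off that section buys nothing. Deferring this single hard step to the cited Corollary makes the proposal circular as a self-contained proof, since the entire content of that Corollary is the step you skip; to close the argument you must either establish invertibility of \( \tangent_{(x,z)} g \) on an open neighborhood from the specific structure at hand (it does not follow from the stated hypotheses by smallness alone) or reproduce the actual argument of the cited source.
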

Using this tool, we can give a general criterion for determining whether the orbits of a Fréchet \( G \)-action are submanifolds.
\begin{prop}
	\label{prop:slice:orbitClosedSubmanifold}
	Let \( (M, G, \Upsilon) \) be a proper tame Fréchet Lie group action and let \( m \in M \).
	Assume that the stabilizer \( G_m \) is a principal tame Fréchet Lie subgroup of \( G \).
	If the map \( \tangent_e \Upsilon_m: \LieA{g} \to \TBundle_m M \) is tame regular, then the orbit \( G \cdot m \) is a closed split submanifold of \( M \).
\end{prop}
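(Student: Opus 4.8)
The plan is to reduce the statement to showing that the orbit map, after descending to the homogeneous space, is a tame embedding with closed image. Since \( G_m \) is a principal tame Fréchet Lie subgroup, the quotient \( G \slash G_m \) carries a tame Fréchet manifold structure and the orbit map factors as \( \Upsilon_m = \check\Upsilon_m \circ \pi \) through the canonical submersion \( \pi: G \to G \slash G_m \) and an injective, tame smooth map \( \check\Upsilon_m: G \slash G_m \to M \). Being first-countable and hence compactly generated, \( M \) satisfies the hypotheses of \cref{prop:lieGroupProperAction:properties}, so properness guarantees that the orbit \( G \cdot m \) is closed and that \( \check\Upsilon_m \) is a homeomorphism onto it. It therefore suffices to prove that \( \check\Upsilon_m \) is a tame immersion whose image in each tangent space is tamely complemented; combined with the embedding property this yields that \( G \cdot m \) is a closed split submanifold.

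To apply the Nash--Moser immersion criterion \cref{prop:submanifolds:immersion:byNashMoser}, I must upgrade the single-point hypothesis of tame regularity of \( \tangent_e \Upsilon_m \) to a statement valid in a whole neighborhood. The crucial device is the equivariance \( \Upsilon_m \circ L_g = \Upsilon_g \circ \Upsilon_m \), whose differential at the identity gives
\[
	\tangent_g \Upsilon_m = \tangent_m \Upsilon_g \circ \tangent_e \Upsilon_m \circ (\tangent_e L_g)^{-1}.
\]
Since left translation \( L_g \) and the diffeomorphism \( \Upsilon_g \) are tame, the outer maps are tame isomorphisms, so \( \tangent_g \Upsilon_m \) is tame regular for every \( g \), with the translated generalized inverse \( \tangent_e L_g \circ S \circ (\tangent_m \Upsilon_g)^{-1} \) built from a fixed generalized inverse \( S \) of \( \tangent_e \Upsilon_m \). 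In particular \( \ker \tangent_g \Upsilon_m = \tangent_e L_g(\LieA{g}_m) \) is exactly the vertical space of \( \pi \) at \( g \), whence \( \tangent_{\equivClass{g}} \check\Upsilon_m \) is tamely split injective.

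Because the group operations and the action depend tamely on \( g \), these translated data assemble into tame vertical bundle morphisms. The image \( \img \tangent_g \Upsilon_m \) is the tangent space to the orbit at \( g \cdot m \) and is thus intrinsic to that point, hence constant along the fibers of \( \pi \); it therefore descends to a tame split subbundle of \( \check\Upsilon_m^* \TBundle M \). Choosing a \( G_m \)-invariant tame complement of \( \img \tangent_e \Upsilon_m \) in \( \TBundle_m M \) — possible by averaging over the compact stabilizer \( G_m \), exactly as in \cref{prop:sliceStratification:invariantComplementCompactGroup} — and transporting it by \( \tangent_m \Upsilon_g \) produces the complementary tame splitting near the identity coset. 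Consequently the sequence \( 0 \to \TBundle(G \slash G_m) \xrightarrow{\tangent \check\Upsilon_m} \check\Upsilon_m^* \TBundle M \) is tamely split bundle exact in a neighborhood of \( \equivClass{e} \), and \cref{prop:submanifolds:immersion:byNashMoser} shows that \( \check\Upsilon_m \) is a tame immersion there. By homogeneity of the orbit, expressed through \( \check\Upsilon_m \circ \bar L_g = \Upsilon_g \circ \check\Upsilon_m \) with \( \bar L_g \) the induced diffeomorphism of \( G \slash G_m \), it is a tame immersion at every point.

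Finally, a tame immersion that is a topological embedding onto a closed subset is an embedding onto a closed split submanifold: the local immersion normal form supplies submanifold charts, the embedding property ensures compatibility with the subspace topology, and closedness of \( G \cdot m \) makes the resulting submanifold closed, with tamely complemented tangent spaces by tame regularity. I expect the main obstacle to be precisely the neighborhood requirement of the Nash--Moser theorem: in contrast to the Banach case, tame regularity at the single point \( m \) does not by itself yield an immersion, and the whole argument hinges on propagating it by left translation and packaging the translated inverses into a tame bundle splitting that descends to \( G \slash G_m \).
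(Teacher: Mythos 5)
Your route is the paper's own: factor \( \Upsilon_m = \check{\Upsilon}_m \circ \pi_{G_m} \), use properness to get closedness of the orbit and the homeomorphism property of \( \check{\Upsilon}_m \), propagate the single-point tame regularity by left translation, and feed the resulting tamely split bundle exact sequence into \cref{prop:submanifolds:immersion:byNashMoser}. However, there is one genuine gap, located at the sentence \enquote{In particular \( \ker \tangent_g \Upsilon_m = \tangent_e L_g(\LieA{g}_m) \) is exactly the vertical space of \( \pi \) at \( g \)}. Tame regularity of \( \tangent_e \Upsilon_m \) asserts only that its kernel is tamely complemented; it says nothing about what that kernel \emph{is}. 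So the identification \( \ker \tangent_e \Upsilon_m = \LieA{g}_m \) does not follow from your translated-inverse argument — it is precisely the injectivity of \( \tangent \check{\Upsilon}_m \), which your proof needs as an input. The inclusion \( \LieA{g}_m \subseteq \ker \tangent_e \Upsilon_m \) is clear (differentiate curves in \( G_m \)), but the converse is nontrivial in this setting: without an exponential map or a solution theory for ODEs in Fréchet spaces, a vector \( \xi \) with \( \xi \ldot m = 0 \) cannot be integrated to a curve in the stabilizer, so a priori \( \ker \tangent_e \Upsilon_m \) could be strictly larger than \( \LieA{g}_m \), in which case \( \check{\Upsilon}_m \) would fail to be an immersion. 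Your concluding remark locates the crux solely in the neighborhood requirement of Nash--Moser, but this kernel identification is a second, independent crux that you have skipped over.

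The paper closes exactly this point in a dedicated step, using the hypothesis that \( G_m \) is a \emph{principal} Lie subgroup: choosing a local section \( \sigma: V \to G \) of \( \pi_{G_m} \) such that \( \mu(x, h) = \sigma(x) h \) is a diffeomorphism onto a neighborhood of the identity, every smooth curve \( \gamma \) through \( e \) decomposes as \( \gamma(t) = \sigma(\gamma_V(t)) \gamma_{G_m}(t) \), which yields \( \tangent_e \Upsilon_m(\xi) = \tangent_e \sigma(\dot{\gamma}_V) \ldot m \) and \( \tangent_e \pi_{G_m}(\xi) = \dot{\gamma}_V \), and hence \( \ker \tangent_e \Upsilon_m = \ker \tangent_e \pi_{G_m} \). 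With that step inserted, the remainder of your sketch matches the paper's proof in substance. The residual differences are cosmetic: where you argue that the translated images and inverses \enquote{descend} to \( G \slash G_m \), the paper writes explicit trivializations \( \tau(g, X) = (g, g \ldot X) \), in which \( \img(\tangent \Upsilon_m) \) becomes the constant subbundle \( U \times \img(\tangent_e \Upsilon_m) \) and \( \tangent \check{\Upsilon}_m \) becomes the constant map \( (\equivClass{g}, \xi) \mapsto (\equivClass{g}, \xi \ldot m) \), so that a single tame left inverse of \( \restr{(\tangent_e \Upsilon_m)}{\LieA{m}} \) suffices; and the paper performs the \( G_m \)-averaging on the complement \( \LieA{m} \) of \( \LieA{g}_m \) in \( \LieA{g} \), whereas you average the complement of the image in \( \TBundle_m M \) — both are legitimate uses of \cref{prop:sliceTheorem:linearActionCompactGroup:invariantMetric}-style averaging over the compact stabilizer.
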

\begin{proof}
	Recall that, for every \( m \in M \), the action \( \Upsilon_m: G \to M \) induces an injective map \( \check{\Upsilon}_m: G \slash G_m \to M\).
	Since G acts properly, \cref{prop:lieGroupProperAction:properties} implies that the orbit $G \cdot m$ is closed in $M$ and that \( \check{\Upsilon}_m \) is a homeomorphism onto \( G \cdot m \). 
	Due to \cref{prop:submanifolds:immersion:byNashMoser}, the orbit \( G \cdot m \) is an embedded submanifold of \( M \) if the sequence
	\begin{equationcd}[label=eq:slice:orbitClosedSubmanifold:tangentSequence]
		0 \to{r}
			& \TBundle (G \slash G_m) \to{r}{\tangent \check{\Upsilon}_m}
			& \check{\Upsilon}_m^* \TBundle M
	\end{equationcd}
	over \( G \slash G_m \) is tamely split bundle exact in a neighborhood of the identity coset.
	Let us verify the required properties step by step.
	\begin{itemize}
		\item Tame smoothness:
			As $G_m$ is a principal tame Lie subgroup, the quotient \( G \slash G_m \) is a tame Fréchet manifold such that the canonical projection \( \pi_{G_m}: G \to G \slash G_m \) admits tame smooth local sections.
			The map $\check\Upsilon_m$ is tame smooth, because it locally factors into the tame smooth map $\Upsilon_m$ and a tame smooth section of $\pi_{G_m}$.

		\item Injectivity of $\tangent \check{\Upsilon}_m$:
			Due to the factorization \( \Upsilon_m = \check\Upsilon_m \circ \pi_{G_m} \), the differential \( \tangent_{\equivClass{g}} \check{\Upsilon}_m \) at \( \equivClass{g} \in G \slash G_m \) is injective if $\ker \tangent_g \Upsilon_m = \ker \tangent_g \pi_{G_m}$ holds.
			By equivariance of $\Upsilon_m$ and $\pi_{G_m}$, it suffices to consider the case $g = e$.
			Since $G_m$ is a principal Lie subgroup, there exists a smooth local section $\sigma: V \to G$ of \( \pi_{G_m} \) defined on an open neighborhood $V$ of the identity coset such that the map \( \mu: V \times G_m \to G \) defined by \( \mu(X, h) = \sigma(X)h \) is a diffeomorphism onto an open neighborhood of the identity in $G$.
			Thus, every smooth curve $\gamma$ in \( G \) with $\gamma(0) = e$ induces smooth curves $\gamma_V$ and $\gamma_{G_m}$ in $V$ and $G_m$, respectively, for sufficiently small $t$.
			That is, $\gamma(t) = \sigma(\gamma_V(t)) \gamma_{G_m}(t)$.
			Thus, if \( \gamma \) is a curve representing \( \xi \in \TBundle_e G \), we obtain 
			\begin{equation}\begin{split}
				\tangent_e \Upsilon_m(\xi)
					&= \difFracAt{}{t}{t=0} (\Upsilon_m \circ \gamma)(t)
					= \difFracAt{}{t}{t=0} \bigl(\sigma(\gamma_V(t)) \gamma_{G_m}(t)\bigr) \cdot m
					\\
					&= \difFracAt{}{t}{t=0} \sigma(\gamma_V(t)) \cdot m
					= \tangent_e \sigma (\dot{\gamma}_V) \ldot m \,.
			\end{split}\end{equation}
			By definition, $\sigma(\gamma_V(t))$ is never an element of \( G_m \) (except when it is the identity element) and hence \( \tangent_e \Upsilon_m (\xi) \) vanishes only if $\dot{\gamma}_V = 0$.
			On the other hand, $\pi_{G_m}$ corresponds in the product coordinates given by \( \mu \) to the projection on the $V$-component and thus $\tangent_e \pi_{G_m} (\xi) = \dot\gamma_V$.
			Consequently, \( \ker \tangent_g \Upsilon_m = \ker \tangent_g \pi_{G_m} \) and \( \tangent_{\equivClass{g}} \check{\Upsilon}_m \) is injective for all \( \equivClass{g} \in G \slash G_m \).
		\item Bundle structure of $\img (\tangent \check{\Upsilon}_m)$:
			By equivariance, it suffices to construct the subbundle chart over some neighborhood of the identity coset.
			Exploiting the local triviality of \( \pi_{G_m} \), we can again work with \( \Upsilon_m \) instead of \( \check{\Upsilon}_m \).
			That is, it is enough to show that the bundle
			\begin{equation}
				\img (\tangent \Upsilon_m) = \bigDisjUnion_{g \in G} \img (\tangent_g \Upsilon_m)
			\end{equation}
			is a subbundle of \( \Upsilon^*_m \TBundle M \) in a neighborhood of the identity.
			Let \( U \subseteq G \) be an open neighborhood of the identity such that
			\begin{equation}
				\label{eq:slice:orbitClosedSubmanifold:trivalization}
				\tau: U \times \TBundle_m M \to \restr{(\Upsilon_m^* \TBundle M)}{U},
				\qquad
				(g, X) \mapsto (g, g \ldot X)
			\end{equation}
			is a local trivialization of \( \Upsilon_m^* \TBundle M \).
			Due to the equivariance relation $g \cdot \Upsilon_m(a) = \Upsilon_m (ga)$, tangent vectors of the form $X = \tangent_g \Upsilon_m (g \ldot \xi)$ with $\xi \in \LieA{g}$ are mapped to $(g, \tangent_e \Upsilon_m (\xi))$ under $\tau^{-1}$.
			Therefore, $\tau^{-1}(\img (\tangent \Upsilon_m)) = U \times \img (\tangent_e \Upsilon_m)$.
			Since \( \tangent_e \Upsilon_m: \LieA{g} \to \TBundle_m M \) is tame regular, its image is tamely complemented.
			Hence, \( \img (\tangent \Upsilon_m) \) is a split subbundle.
		\item Tame left inverse:
			Since  \( \tangent_e \Upsilon_m: \LieA{g} \to \TBundle_m M \) is tame regular, there exists tame decompositions
			\begin{equation}
				\LieA{g} = \LieA{m} \oplus \LieA{g}_m, \qquad \TBundle_m M = \LieA{g} \ldot m \oplus Y,
			\end{equation}
			where \( \LieA{m} \) and \( Y \) are closed subspaces of \( \LieA{g} \) and \( \TBundle_m M \), respectively.
			By averaging over \( G_m \), we may assume that \( \LieA{m} \) is \( \AdAction_{G_m} \)-invariant, see \cref{prop:sliceTheorem:linearActionCompactGroup:invariantMetric}.
			Moreover, tame regularity implies that \( \tangent_e \Upsilon_m \) induces a tame isomorphism from \( \LieA{m} \) onto \( \LieA{g} \ldot m \).
			In particular, there exists a tame left inverse of \( \restr{(\tangent_e \Upsilon_m)}{\LieA{m}} \).

			Now, consider the vector bundle map \( \tangent \check{\Upsilon}_m: \TBundle (G \slash G_m) \to \check{\Upsilon}_m^* \TBundle M \) over \( G \slash G_m \).
			Since \( \pi_{G_m}: G \to G \slash G_m \) is a locally trivial tame principal \( G_m \)-bundle, there exists a tame smooth local section \( \chi: G \slash G_m \supseteq V \to G \) defined on a neighborhood \( V \) of the identity coset.
			The section \( \chi \) induces a local trivialization of \( \TBundle (G \slash G_m) \) by the map
			\begin{equation}
				V \times \LieA{m} \to \TBundle (G \slash G_m), \qquad \bigl(\equivClass{g}, \xi\bigr) \mapsto \tangent_{\chi(\equivClass{g})} \pi_{G_m} \bigl( \chi(\equivClass{g}) \ldot \xi \bigr).
			\end{equation}
			Similarly, as mentioned above, we obtain a local trivialization of \( \check{\Upsilon}^*_m \TBundle M \) by
			\begin{equation}
				\check{\tau}: V \times \TBundle_m M \to \restr{(\check{\Upsilon}_m^* \TBundle M)}{V},
				\qquad
				\bigl(\equivClass{g}, X\bigr) \mapsto \bigl(\equivClass{g}, \chi(\equivClass{g}) \ldot X\bigr).
			\end{equation}
			The calculation
			\begin{equation}\begin{split}
				\tangent_{\equivClass{g}} \check{\Upsilon}_m \circ \tangent_{\chi(\equivClass{g})} \pi_{G_m} \bigl( \chi(\equivClass{g}) \ldot \xi \bigr)
					&= \tangent_{\chi(\equivClass{g})} \Upsilon_m \bigl( \chi(\equivClass{g}) \ldot \xi \bigr)
					\\
					&= \chi(\equivClass{g}) \ldot \tangent_e \Upsilon_m (\xi)
					\\
					&= \chi(\equivClass{g}) \ldot (\xi \ldot m)
			\end{split}\end{equation}
			shows that, with respect to these trivializations, the bundle map \( \tangent \check{\Upsilon}_m \) is given by the map
			\begin{equation}
				V \times \LieA{m} \to V \times \TBundle_m M, \qquad \bigl(\equivClass{g}, \xi\bigr) \mapsto \bigl(\equivClass{g}, \xi \ldot m\bigr).
			\end{equation}
			Hence, the tame left inverse of \( \restr{(\tangent_e \Upsilon_m)}{\LieA{m}} \) induces a tame smooth family of left inverses of \( \tangent \check{\Upsilon}_m \) in the neighborhood \( V \) of the identity coset.
			\qedhere
	\end{itemize}
\end{proof}
We emphasize that we only require a (generalized) inverse of the map \( \tangent_e \Upsilon_m \) and not a family of inverses although the Nash--Moser inverse function theorem is employed.
As we have seen in the proof, the family of inverses is obtained by left translation on the group \( G \).

Under the same assumptions as in the previous proposition, let \( N_m \) be a tame complement of \( \img (\tangent_e \Upsilon_m) \) in \( \TBundle_m M \).
Then, the normal bundle
\begin{equation}
	\NBundle O \defeq \bigDisjUnion_{g \in G} g \ldot N_m
\end{equation}
of the orbit $O = G\cdot m$ is a tame smooth split subbundle of \( \restr{\TBundle M}{O} \).
Indeed, the image of \( \NBundle O \) under the local trivialization \( \tau \) defined in~\eqref{eq:slice:orbitClosedSubmanifold:trivalization} is \( U \times N_m \) and thus \( \NBundle O \) is a tame smooth subbundle of \( \restr{\TBundle M}{O} \) complementary to \( \TBundle O \).

\begin{remark}
	\Textcite{JotzNeeb2008} have shown that for proper actions of Banach Lie groups on Banach manifolds the infinitesimal orbit \( \LieA{g} \ldot m \) is automatically closed in \( \TBundle_m M \).
	We do not know whether this result generalizes to proper (tame) Fréchet Lie group actions.
\end{remark}

\subsubsection{Riemannian geometry} \label{cha::lcm:gradedRiemannianGeometry}
In this subsection we consider \( G \)-manifolds endowed with Riemannian structures.
As outlined above, the Riemannian structure will be used to construct the topological metric on \( M \) and the equivariant local addition.
Some of the following results can be found in \parencite{Subramaniam1984}, but are included with proof for the convenience of the reader.

In the Banach space setting, the theory of Riemannian geometry splits into two branches depending on whether the map \( \TBundle M \to \CotBundle M \) induced by the metric is an isomorphism or merely an injection.
The former are called strong metrics and the latter are referred to as weak metrics. 
Strong Riemannian metrics induce a topology on the tangent spaces equivalent to the one induced from the manifold topology, while for weak Riemannian metrics the induced topology is coarser. 
The well-consolidated building of finite-dimensional Riemannian geometry generalizes almost without changes to strong metrics, but it is confronted with serious problems if weak metrics are considered.
One of the main complications for weak metrics is that the Koszul formula no longer establishes the existence of the Levi--Civita connection. 

In the case of a Fréchet manifold, a Riemannian metric cannot be strong, because the dual of a Fréchet space is never a Fréchet space (except when it is a Banach space).
Our main idea is to circumvent some of these problems by using not only one metric but a whole collection of them which are strong in the sense that together they induce an equivalent topology on the tangent space.
This approach is inspired by the fact that the topology of a Fréchet space is generated by a family of seminorms.

\begin{defn}
	Let \( M \) be a Fréchet manifold.
	A \emphDef{graded Riemannian metric} is a family of smooth functions \( g^k: \TBundle M \times_M \TBundle M \to \R \) indexed by \( k \in \N_0 \) with the following properties: 
	\begin{thmenumerate}
		\item 
			For all \( m \in M \), the functions \( g^k_m: \TBundle_m M \times \TBundle_m M \to \R \) are semi-inner products, that is, they are symmetric, bilinear and positive semi-definite.  
		\item
			The induced seminorms\footnote{In the following, no powers of seminorms are required and thus the notation $\normDot^k$ should not lead to confusions.} $\normDot^k_m = \sqrt{g^k_m(\cdot, \cdot)}$ constitute a directed fundamental system\footnote{A set \( \normDot^k \) of continuous seminorms on a locally convex space \( X \) is called a fundamental system if for every continuous seminorm \( \absDot \) on \(  X \) there exists \( k \) and \( C > 0 \) such that \( \absDot \leq C \normDot^k \). A fundamental system is directed if \( \normDot^k \leq \normDot^{k+1} \) for all \( k \).} generating a topology on \( \TBundle_m M \) equivalent to the original one for all \( m \in M \).
			If \( M \) is a tame Fréchet manifold, we require \( \normDot^k \) to be tamely equivalent to the original fundamental system generating the topology on \( \TBundle_m M \).
			\qedhere
	\end{thmenumerate}	 
\end{defn}
The definition requires only a weak cohesion of the collection $g^k$ at two nearby points.
Indeed, consider the local representation \( g^k: U \times X \times X \to \R \) of \( g^k \) in some chart at \( m \) with \( U \subseteq X \) being an open neighborhood of \( 0 \) in the local model space \( X \) of \( M \).
For all \( x \in U \), the family of semi-inner products \( g^k_x \) induces a topology on \( X \) equivalent to the one induced by \( g^k_0 \).
Hence, for every \( k \), there exists \( n \in \N \) and \( C > 0 \) such that \( \normDot^k_x \leq C \normDot^n_0 \) (and vice-versa with  the roles of \( x \) and \( 0 \) swapped).
Note that the index \( n \) might not only depend on \( k \) but also on the point \( x \).
For our purposes, a more uniform control is advantageous.
The following definition is inspired by the notion of tame equivalence of sets of seminorms, \cf \parencite[Definition~II.1.1.3]{Hamilton1982}.
\begin{defn}
	\label{def:riemannianGeometry:locallyEquivalentMetric}
	Let \( M \) be a Fréchet manifold.
	A graded Riemannian metric \( g^k \) on \( M \) is called \emphDef{locally equivalent} of degree \( r \) if for every $m \in M$ and $K>1$ there exists a chart $\kappa: M \supseteq U \to X$ at $m$ such that
	\begin{equation}
		\label{eq:riemannianGeometry:locallyEquivalentMetric}
  		\norm{v}^k_x \leq K \norm{v}^{k+r}_0
  		\quad \text{and} \quad
  		\norm{v}^k_0 \leq K \norm{v}^{k+r}_x
	\end{equation}
	holds for all \( k \) and $x \in U$, \( v \in X \).
\end{defn}
\begin{example}
	Let \( M \) be a compact finite-dimensional manifold.
	Consider the set \( \MetricSpace \) of Riemannian metrics on \( M \), which is an open subset of the Fréchet space \( \SymTensorFieldSpace^2 \) of symmetric \( 2 \)-tensors.
	Following \textcite{Ebin1970}, endow \( \TBundle \MetricSpace \isomorph \MetricSpace \times \SymTensorFieldSpace^2 \) with the following family of semi-inner products:
	\begin{equation}
		\dualPairR{h_1}{h_2}^k_g = \sum_{j=0}^k \,\, \int_M g(\nabla^j_g h_1, \nabla^j_g h_2) \vol_g, 	
	\end{equation}
	where \( \nabla^j_g \) denotes the iterated covariant derivative induced by the metric \( g \).
	In \parencite[p.~21]{Ebin1970}, Ebin has shown that \( \dualPairRDot^k_g \) is continuous in \( g \) and that for all \( g \in \MetricSpace \) there exists \( C_g > 0 \) such that
	\begin{equation}
		\norm{h}_{\HFunctionSpace^k} \leq C_g \norm{h}^k_g,
	\end{equation}
	where \( \normDot_{\HFunctionSpace^k} \) are the usual Sobolev norms on \( \SymTensorFieldSpace^2 \).
	Moreover, the constant \( C_g \) is given in terms of the Christoffel symbols of \( g \) and so it depends continuously on \( g \).
	Thus, \( \dualPairRDot^k \) is a locally equivalent graded Riemannian metric on \( \MetricSpace \).
\end{example}

Analogously to the finite-dimensional case, we define the \emphDef{length of a \( \FunctionSpace^1 \)-curve} $\gamma: \R \supseteq [a, b] \to M$ with respect to the $k$-th component as
\begin{equation}
	l^k (\gamma) \defeq \int_a^b \norm{\dot \gamma(t)}^k_{\gamma(t)} \dif t.
\end{equation}
Since \( \normDot^k \) is a directed system, we have \( l^k \leq l^{k+1} \) for all \( k \).
If two points $p$ and $q$ lie in the same connected component of $M$, then their (geodesic) distance $d^k(p,q)$ is defined as the infimum of $l^k$ over all piecewise \( \FunctionSpace^1 \)-curves connecting $p$ and $q$.
Finally, we define the length metric as
\begin{equation}
	\label{eq:riemannian:lengthMetric}
	d(p,q) \defeq \sum_{k=1}^\infty 2^{-k} \frac{d^k(p,q)}{1 + d^k(p,q)}.
\end{equation}
For a finite-dimensional Riemannian manifold the geodesic distance is always positive, because the exponential map is locally invertible.
However, this is no longer true in infinite dimensions and it might happen that the geodesic distance function \( d^k \) vanishes for some \( k \).
For example, \textcite[Section~5]{MichorMumford2005} show that the \( \LTwoFunctionSpace \)-geodesic distance on the identity component of the diffeomorphism group vanishes, that is, every two diffeomorphisms isotropic to the identity can be connected by a curve which is arbitrarily short with respect to the \( \LTwoFunctionSpace \)-metric.
However, the next proposition shows that, for a locally equivalent graded Riemannian metric, \( d^k \) cannot vanish for \emph{all} \( k \).
\begin{prop}[{\cf \parencite[p. 52]{Subramaniam1984}}]
	\label{prop::riemannianGeometry:locEquivalentMetricInducesLengthMetric}
	Let $M$ be a Fréchet manifold and let $g^k$ be a graded Riemannian metric, which is locally equivalent with degree \( r \).
	Then, the function $d$ given by~\eqref{eq:riemannian:lengthMetric} is a metric on each connected component of $M$ compatible with the original manifold topology.
\end{prop}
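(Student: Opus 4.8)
The plan is to reduce everything to two-sided comparisons of the component distances $d^k$ with the fixed seminorms $\norm{\cdot}^k_0$ read off in a chart, the local equivalence of degree $r$ supplying the uniform control that makes such comparisons possible. First I would dispose of the algebraic properties: each $d^k$ is symmetric, since reversing a parametrization does not change $l^k$, and satisfies the triangle inequality by concatenating connecting curves. Because $\phi(t) \defeq t/(1+t)$ is monotone, subadditive on $[0,\infty)$ and bounded by $1$, these properties transfer to the weighted sum $d$ in \eqref{eq:riemannian:lengthMetric}, which is finite on each connected component since there a piecewise $\FunctionSpace^1$-path always exists. Moreover, from $d \ge 2^{-k}\phi(d^k)$ and $\phi \le 1$ one checks that $d(p_i, q_i) \to 0$ if and only if $d^k(p_i, q_i) \to 0$ for every fixed $k$, so both non-degeneracy and the compatibility with the topology can be argued componentwise. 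It thus remains to control each $d^k$ near a point.

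\emph{Chart estimates.} Fix $p \in M$ and $K > 1$, and choose a convex chart $\kappa \colon M \supseteq U \to X$ at $p$ as in \cref{def:riemannianGeometry:locallyEquivalentMetric} with $\kappa(p) = 0$. For the \textbf{upper bound}, the straight segment $t \mapsto t\,\kappa(q)$ from $0$ to $\kappa(q)$ has constant velocity $\kappa(q)$, so \eqref{eq:riemannianGeometry:locallyEquivalentMetric} gives $d^k(p,q) \le l^k(\text{segment}) \le K \norm{\kappa(q)}^{k+r}_0$ for all $q \in U$. For the \textbf{lower bound}, the elementary inequality $\int_0^1 \norm{\dot c(t)}^k_0 \dif t \ge \norm{c(1)-c(0)}^k_0$ for the fixed seminorm, together with $\norm{v}^k_0 \le K \norm{v}^{k+r}_x$ from \eqref{eq:riemannianGeometry:locallyEquivalentMetric}, yields $l^{k+r}(\gamma) \ge \tfrac1K \norm{\kappa(q)}^k_0$ for every curve $\gamma$ from $p$ to $q$ that \emph{stays} inside $U$.

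\emph{Curves leaving the chart.} The delicate point---and the place where infinite dimensionality genuinely enters---is that a connecting curve need not remain in $U$; in infinite dimensions geodesic distances can collapse entirely (\cf the vanishing-distance phenomenon of \textcite{MichorMumford2005}), so such curves cannot merely be discarded. I would fix an index $k_1$ and a radius $\rho > 0$ whose closed $\norm{\cdot}^{k_1}_0$-ball lies in $\kappa(U)$, and track a curve up to the first time it meets the sphere of radius $\rho$: the initial arc remains in $U$ and runs from $0$ to a point of $\norm{\cdot}^{k}_0$-norm $\rho$, so by the lower bound its $l^{k+r}$-length alone is at least $\rho/K$. Combining the two cases gives the uniform estimate $d^{k+r}(p,q) \ge \tfrac1K \min\bigl(\norm{\kappa(q)}^k_0,\, \rho\bigr)$ for every $k \ge k_1$, the directedness of the seminorms keeping these $\rho$-balls inside $U$ for all such $k$. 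Points far from $p$ are thereby bounded away in $d^{k+r}$, while nearby points are controlled linearly by $\norm{\kappa(q)}^k_0$.

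\emph{Conclusion.} Non-degeneracy and compatibility now follow. If $d(p,q) = 0$ then $d^{k+r}(p,q) = 0$ for all $k$; the uniform estimate forces $\min(\norm{\kappa(q)}^k_0, \rho) = 0$, hence $\norm{\kappa(q)}^k_0 = 0$ for all $k \ge k_1$ and so, by directedness, for all $k$; since the $\norm{\cdot}^k_0$ form a fundamental system for the Hausdorff topology of $X$, this gives $\kappa(q) = 0$, i.e.\ $p = q$. For the topology, the upper bound shows that $q_i \to p$ in $M$ implies $d^k(p,q_i) \to 0$ for each $k$, hence $d(p,q_i) \to 0$; conversely $d(p,q_i) \to 0$ gives $d^{k+r}(p,q_i) \to 0$, so the uniform estimate first places $q_i$ eventually inside the $\rho$-ball (the far bound being bounded below by $\rho/K$) and then yields $\norm{\kappa(q_i)}^k_0 \le K\, d^{k+r}(p,q_i) \to 0$ for each $k$, i.e.\ $q_i \to p$ in $M$. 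As Fréchet manifolds are metrizable, hence first countable and sequential, this coincidence of convergent sequences shows that $d$ induces the manifold topology. \textbf{The main obstacle} is precisely the lower bound in the presence of curves that exit the chart: without the uniform, degree-$r$ comparison of the whole graded family one could not rule out the infinite-dimensional collapse of geodesic distance, and it is exactly here that the hypothesis of local equivalence is indispensable.
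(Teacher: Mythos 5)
Your proposal is correct and follows essentially the same route as the paper's proof: reduce to a chart, bound \( d^k \) from above along the straight segment and from below via the fundamental theorem of calculus combined with the degree-\( r \) local equivalence, and compare the topologies through sequences in the linear picture. If anything, your handling of curves that leave the chart---truncating at the first exit from a fixed \( \normDot^{k_1}_0 \)-ball of radius \( \rho \) contained in the chart image, so that directedness of the seminorms yields the uniform estimate \( d^{k+r}(p,q) \geq \frac{1}{K}\min\bigl(\norm{\kappa(q)}^{k}_0, \rho\bigr) \)---is more careful than the paper's truncation argument, whose lower bound involves the varying exit point \( \gamma(c) \) and is not manifestly uniform over all connecting curves.
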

\begin{proof}
	All properties of a metric are evident, except positivity.
	Fix a point $p \in M$ and let \( U \) be an open neighborhood of \( p \) in \( M \).
	Let \( q \in M \) and let \( \gamma \) be an arbitrary \( \FunctionSpace^1 \)-curve connecting \( p \) to \( q \).
	We have to show that \( l^k(\gamma) \) is bounded away from \( 0 \) by a constant that does not depend on \( \gamma \).
	It suffices to consider the case where the curve $\gamma$ completely lies in $U$.
	To see this, suppose $\img(\gamma) \nsubseteq U$.
	By continuity of $\gamma$, there exists $c \in [a,b]$ such that for all parameters \( t \leq c \) the curve takes values in $U$ and $\gamma(c) \neq \gamma(a)$. But $l^k(\gamma) \geq l^k(\gamma_{\restriction_{[a,c]}})$ implies that it is enough to show positivity of \( l^k \) for curves lying completely in \( U \).

	Thus, assume now that the image of \( \gamma \) is contained in \( U \).
	Using a chart on \( M \), we identify \( U \) with an open subset \( V \) of \( 0 \) in the modeling Fréchet space \( X \).
	Thus, under the usual identifications, $\gamma$ is a curve $\gamma: [a,b] \to V \subseteq X$ from \( 0 \) to \( q \in V \) and the metric is a function $g^k: V \times X \times X \to \R$.
	Due to the local equivalence property~\eqref{eq:riemannianGeometry:locallyEquivalentMetric}, we can assume that, for some \( K > 1 \),
	\begin{equation}
  		\normDot^k_x \leq K \normDot^{k+r}_0
  		\quad \text{and} \quad
  		\normDot^k_0 \leq K \normDot^{k+r}_x
	\end{equation}
	hold for all $x \in U$.
	Using the mean value theorem and the fundamental theorem of calculus \parencite[Theorems~2.1.1 and~2.2.2]{Hamilton1982}, the length of $\gamma$ can be estimated from below by
	\begin{equation}\label{eq:riemannianGeometry:lengthMetricEstimateLengthOfCurve} \begin{split} 
		l^k(\gamma) 
			&= \int_a^b \norm{\dot \gamma(t)}^k_{\gamma(t)} \dif t
			\geq \frac{1}{K} \int_a^b \norm{\dot \gamma(t)}^{k-r}_0 \dif t
			\\
			&\geq \frac{1}{K} \norm*{\int_a^b \dot \gamma(t) \dif t}^{k-r}_0
			= \frac{1}{K} \norm{\gamma(b) - \gamma(a)}^{k-r}_0
			= \frac{1}{K} \norm{q}^{k-r}_0,
	\end{split}\end{equation}
	for all \( k \geq r \).
	Since fundamental systems of seminorms separate points, there exists $n$ such that $\norm{q}^n_0$ is non-zero. 
	Thus, $l^k$ with \( k = n + r \) is bounded away from \( 0 \) by a positive constant not depending on \( \gamma \). 
	Therefore, $d(p,q) > 0$ whenever $p\neq q$ and $d$ is a metric.

	Finally, it remains to show that the topology of $d$ coincides with the manifold topology.
	Since charts on \( M \) are homeomorphisms, the argument at the beginning of the proof shows that it is enough to restrict attention to the linear setting.
	Thus, let \( V \subseteq X \) be an open subset of the modeling Fréchet space \( X \).
	Let \( (q_i) \) be a sequence in \( V \).
	As \( \normDot^k_0 \) is a compatible system of seminorms, it suffices to show that \( \norm{q_i}^k_0 \to 0 \) for all \( k \) is equivalent to \( d^n(0, q_i) \to 0 \) for all \( n \).
	Since~\eqref{eq:riemannianGeometry:lengthMetricEstimateLengthOfCurve} implies \( \norm{q_i}^k_0 \leq K d^{k+r}(0, q_i) \) one direction is clear.
	For the converse direction, suppose \( \norm{q_i}^k \to 0 \) for all \( k \).
	The path-length can be estimated by considering the linear curve $\sigma_i(t) = t q_i$ with \( t \in [0,1] \).
	Then, using the local equivalence of the seminorms, we get
		\begin{equation}
		 	d^k(0, q_i)
		 		\leq l^k(\sigma_i)
		 		= \int_0^1 \norm{\dot \sigma_i(t)}^k_{\sigma_i(t)} \dif t
		 		= \int_0^1 \norm{q_i}^k_{\sigma_i(t)}
		 		\dif t \leq K \norm{q_i}^{k+r}_0
		\end{equation}
	and the claim follows immediately.
\end{proof}

Using similar arguments, one can prove metrizability of a wide class of Lie groups.
\begin{prop}[\textnormal{\cf \parencite[p. 53]{Subramaniam1984}}]
	\label{prop:lcLieGroup:metrisableZeroTame}
	Every tame Fréchet Lie group \( G \) has a compatible left (or right) invariant metric.
\end{prop}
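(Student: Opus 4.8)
The plan is to transport a graded metric from the Lie algebra to the whole group by left translation and then to reuse the path-length construction of \cref{prop::riemannianGeometry:locEquivalentMetricInducesLengthMetric}. Fix a chart \( \kappa: G \supseteq V \to \LieA{g} \) at the identity with \( \kappa(e) = 0 \), and let \( \normDot_k \) be the directed tame fundamental system of seminorms generating the topology of \( \LieA{g} = \TBundle_e G \). If these seminorms are induced by semi-inner products \( g^k_e \), this is a graded Riemannian metric on \( \TBundle_e G \); in any case the analytic argument below uses only the seminorm estimates. I then spread \( g^k_e \) over \( G \) by left translation: for \( g \in G \) and \( X, Y \in \TBundle_g G \), set
\begin{equation}
	g^k_g(X, Y) \defeq g^k_e\bigl(\tangent_g L_{g^{-1}} X, \tangent_g L_{g^{-1}} Y\bigr),
\end{equation}
where \( L_h: G \to G \), \( g \mapsto hg \), is left translation. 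By construction \( g^k \) is left-invariant, that is, \( g^k_{hg}(\tangent_g L_h X, \tangent_g L_h Y) = g^k_g(X, Y) \) for all \( h \in G \). Tame smoothness of the multiplication implies that \( g \mapsto \tangent_g L_{g^{-1}} \) is tame smooth, so the \( g^k \) are tame smooth on \( \TBundle G \times_G \TBundle G \); since \( \tangent_g L_{g^{-1}} \) is a linear isomorphism, the induced seminorms \( \normDot^k_g \) form a directed fundamental system generating the correct topology on each \( \TBundle_g G \).

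The decisive point is to verify that \( g^k \) is locally equivalent in the sense of \cref{def:riemannianGeometry:locallyEquivalentMetric}, and here left-invariance pays off twice. First, using the left-translated chart \( \kappa \circ L_{m^{-1}} \) at an arbitrary point \( m \), a direct cancellation shows that the chart representation of \( g^k \) near \( m \) coincides identically with its representation near \( e \) in the chart \( \kappa \); hence it suffices to establish the estimates \( \normDot^k_x \leq K \normDot^{k+r}_0 \) and \( \normDot^k_0 \leq K \normDot^{k+r}_x \) for \( x \) near \( 0 \) in the chart at \( e \), and the degree \( r \) is then automatically uniform over \( G \). Second, in this chart \( g^k_x \) is obtained from \( g^k_0 \) by conjugating with the derivative of left translation, so the required estimate reduces to a tame bound on \( x \mapsto \tangent L \), with a fixed loss of derivatives \( r \) and a constant that approaches the identity estimate as \( x \to 0 \). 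This is precisely what tameness of the group multiplication provides, and shrinking the neighborhood makes the constant smaller than any prescribed \( K > 1 \). I expect this tame estimate to be the main obstacle, as it is the only step that genuinely exploits the tame category rather than soft functional analysis.

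With local equivalence in hand, \cref{prop::riemannianGeometry:locEquivalentMetricInducesLengthMetric} (or a verbatim repetition of its proof) applies on each connected component: the path lengths \( l^k(\gamma) = \int \norm{\dot\gamma(t)}^k_{\gamma(t)} \dif t \), the geodesic distances \( d^k \), and the combined distance \( d = \sum_k 2^{-k} d^k / (1 + d^k) \) define a metric on each component compatible with the manifold topology. Because \( g^k \) is left-invariant, \( l^k(L_h \circ \gamma) = l^k(\gamma) \) for every curve \( \gamma \) and every \( h \in G \), so \( d^k(hp, hq) = d^k(p, q) \) and \( d \) is left-invariant. Finally, since \( d < 1 \) everywhere and the connected components of the manifold \( G \) are open, I extend \( d \) to all of \( G \) by declaring \( d(p, q) = 1 \) whenever \( p \) and \( q \) lie in distinct components; the triangle inequality survives because the cross-component value equals the supremum of the within-component distances, and left translation permutes components bijectively while preserving the relation of lying in the same component, so the extended \( d \) remains a compatible left-invariant metric. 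The right-invariant case follows verbatim by transporting \( g^k_e \) with right translations instead.
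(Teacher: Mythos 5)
Your proposal is correct and follows essentially the same route as the paper: left-translate a directed fundamental system of seminorms from \( \LieA{g} \) over \( \TBundle G \), use tameness of left translation to obtain the local-equivalence estimates~\eqref{eq:riemannianGeometry:locallyEquivalentMetric}, and then invoke the proof of \cref{prop::riemannianGeometry:locEquivalentMetricInducesLengthMetric}, observing — as the paper does explicitly — that it uses only the seminorm estimates and not the inner-product structure. Your explicit chart cancellation at an arbitrary point (reducing the estimates to a neighborhood of \( e \) with uniform degree \( r \)) and the extension of \( d \) across connected components by the constant \( 1 \) are details the paper leaves implicit, but they do not constitute a different argument.
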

\begin{proof}
	Let \( \normDot^k \) be a directed fundamental system of seminorms on the Lie algebra \( \LieA{g} \) of \( G \) generating the Fréchet topology.
	Using the left trivialization \( \TBundle G \isomorph G \times \LieA{g} \), we can extend \( \normDot^k \) to every fiber of \( \TBundle G \) by setting
	\begin{equation}
		\norm{g \ldot \xi}^k_g \defeq \norm{\xi}^k \qquad g \in G, \xi \in \LieA{g}.
	\end{equation}
	Since left translation is tame, the resulting family of seminorms is locally equivalent in the sense that an estimate of the form~\eqref{eq:riemannianGeometry:locallyEquivalentMetric} holds. 
	Now the claim follows from the observation that the proof of \cref{prop::riemannianGeometry:locEquivalentMetricInducesLengthMetric} does not rely on the inner product structure and, instead, only the induced locally equivalent seminorms are required.
\end{proof}

In the light of these results, it is natural to define the exponential map via its distance-minimizing property.
\begin{defn}
	\label{defn:riemannianGeometry:exponentialMap}
	Let $g^k$ be a graded Riemannian metric on the manifold \( M \).
	For \( r \in \N \), an \emphDef{$r$-exponential map} for $g^k$ is a smooth map $\exp: \TBundle M \supseteq U \to M$ defined on an open convex subset $U$ of the zero section in $\TBundle M$ such that the following properties hold:
	\begin{thmenumerate}
		\item 
	 		\( \exp(m, 0) = m \) for all \( m \in M \).
	 	\item 
	 		\( \exp_m \equiv \restr{\exp}{U \intersect \TBundle_m M} \) is a local diffeomorphism for all \( m \in M \).
	 	\item 
	 		For every \( X \in U \intersect \TBundle_m M \), the associated curve $\lambda_X: [0,1] \ni t \mapsto \exp(t X)$ fulfills $\difFracAt{}{t}{t=0} \lambda_X(t) = X$ and is the $r$-shortest path between its endpoints, that is
			\begin{equation}
				l^r(\lambda_X) = d^r(m, \exp(X)). \qedhere
			\end{equation}
	\end{thmenumerate}
\end{defn}
For our purposes, the most important consequence is that an exponential map yields a local addition.
\begin{prop}
	\label{prop:riemannian:expGivesLocalAddition}
	Let $g^k$ be a graded Riemannian metric on the manifold \( M \).
	Every $r$-exponential map \( \exp: \TBundle M \supseteq U \to M \) for a graded Riemannian metric $g^k$ is a local addition on \( M \) (after possibly shrinking \( U \)).
\end{prop}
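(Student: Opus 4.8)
The plan is to verify directly the two defining properties of a local addition from \cref{def:slice:localAddition}, reading off each one from the three clauses of the definition of an \( r \)-exponential map. Property~(i) is immediate: the first clause of \cref{defn:riemannianGeometry:exponentialMap} gives \( \exp(m,0) = m \) for all \( m \), so the composition of \( \exp \) with the zero section is \( \id_M \). It therefore remains to show that, after shrinking \( U \), the map \( \pr \times \exp : \TBundle M \supseteq U \to M \times M \), \( (m, X) \mapsto (m, \exp_m(X)) \), is a diffeomorphism onto an open neighborhood of the diagonal.

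First I would record the derivative of \( \pr \times \exp \) along the zero section. The tangency clause of \cref{defn:riemannianGeometry:exponentialMap}, \( \difFracAt{}{t}{t=0}\exp(tX) = X \), says that \( \tangent_0 \exp_m = \id_{\TBundle_m M} \), while differentiating \( \exp(\cdot, 0) = \id_M \) along the zero section shows that the horizontal derivative is the identity as well. Under the splitting \( \TBundle_{(m,0)} \TBundle M \isomorph \TBundle_m M \oplus \TBundle_m M \) into base and fiber directions, this yields \( \tangent_{(m,0)}(\pr \times \exp)(v_h, v_f) = (v_h, v_h + v_f) \), which is a continuous linear isomorphism. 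Next, injectivity near the zero section is cheap because of the triangular structure: the first coordinate of \( \pr \times \exp \) already pins down the base point \( m \), and then \( \exp_m(X_1) = \exp_m(X_2) \) forces \( X_1 = X_2 \), since \( \exp_m \) is a local diffeomorphism and hence injective on a small fiber ball. To make the size of these fiber balls controllable as \( m \) varies, so that \( U \) can be chosen open in \( \TBundle M \), I would reuse the metric shrinking argument of \cref{prop:slice:sliceTheoremGeneral:diffeoOrbit}, now with the zero section in place of the orbit and the graded fiber seminorms in place of \( \rho \). Finally, once \( \pr \times \exp \) is a diffeomorphism onto its image, that image is open and contains the diagonal, since \( (\pr \times \exp)(m,0) = (m,m) \); hence it is a neighborhood of the diagonal.

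The crux, and the genuine obstacle, is the step from fiberwise local diffeomorphism to a local diffeomorphism of the \emph{joint} map \( \pr \times \exp \), equivalently the smoothness of its inverse \( (p,q) \mapsto (p, \exp_p^{-1}(q)) \). In a Fréchet chart, writing \( \exp \) as \( e(x,v) \) with \( e(x,\cdot) \) a local diffeomorphism for each \( x \), the candidate inverse is \( (x,y) \mapsto \bigl(x, e(x,\cdot)^{-1}(y)\bigr) \), and its joint smoothness in the parameter \( x \) is exactly what an inverse function theorem would normally supply. Since no such theorem is available in the Fréchet category, one cannot deduce this from the isomorphism property of \( \tangent(\pr \times \exp) \) alone; this is precisely why clause~(ii) of \cref{defn:riemannianGeometry:exponentialMap} \emph{builds in} the fiberwise local diffeomorphism as a hypothesis rather than deriving it. The remaining work is then to promote the fiberwise inverse to a jointly smooth one, using the smoothness of \( \exp \) together with the uniform control over charts coming from the graded metric; I expect this to be the hard and technical part, with the derivative computation, the injectivity, and the neighborhood-of-diagonal statement all being comparatively soft.
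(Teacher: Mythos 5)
Your skeleton coincides with the paper's own proof: verify \( \exp(m,0)=m \) from clause~(i); use clause~(ii) to obtain, for each \( m \), a convex open \( U_m \subseteq \TBundle_m M \) on which \( \exp_m \) is a diffeomorphism onto an open \( V_m \ni m \); shrink \( U \) to \( \bigcup_{m} U_m \); observe that \( V = \set{(m,p) \in M \times M \given p \in V_m} \) is an open neighborhood of the diagonal; and invert \( \pr_M \times \exp \) fiberwise via \( (m,p) \mapsto \bigl(m, \exp_m^{-1}(p)\bigr) \). Two of your ingredients are absent from the paper and are not needed for this structure. First, the computation of \( \tangent_{(m,0)}(\pr \times \exp) \) buys nothing in the Fréchet category, as you yourself note, since no inverse function theorem is available to exploit the invertibility of the derivative. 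Second, the metric-shrinking argument of \cref{prop:slice:sliceTheoremGeneral:diffeoOrbit} is not invoked in the paper; that lemma's metric bookkeeping serves \emph{global} injectivity along an orbit using \( G \)-invariance of the metrics, whereas here injectivity of \( \pr \times \exp \) on \( \bigcup_m U_m \) is automatic: the first component pins down the fiber, and \( \exp_m \) is injective on \( U_m \) by choice. (Your metric idea is, however, a sensible way to make precise the paper's unargued claim that the \( U_m \) can be chosen so that their union is open in \( \TBundle M \) and the \( V_m \) assemble into an open set \( V \).)

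On your declared crux: you stop at the joint smoothness of \( (m,p) \mapsto \bigl(m, \exp_m^{-1}(p)\bigr) \), announcing the hard part rather than carrying it out, and as a standalone proof that is a gap. But the paper does not carry it out either: its proof simply states that \( \pr_M \times \exp \) is a diffeomorphism with smooth inverse given by the fiberwise formula, calling this \enquote{straightforward to see}, with no derivative computation, no uniform control over charts, and no appeal to an inverse function theorem. So your diagnosis of where the genuine analytic content lies is accurate; the paper resolves it by assertion, in effect reading clause~(ii) of \cref{defn:riemannianGeometry:exponentialMap} as strong enough to legitimize the fiberwise inverse. If you wanted a fully rigorous argument you would indeed need the parameter-uniform control you sketch — smooth dependence of the fiberwise inverses on the base point, which does not follow from fiberwise invertibility alone in Fréchet spaces. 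Relative to the paper, then, your proposal is the same route, merely more candid about the step that is glossed over; what you present as established matches the paper's soft steps, and what you flag as open is exactly what the paper leaves unproved.
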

\begin{proof}
	Let \( m \in M \).
	Since \( \exp_m: U \intersect \TBundle_m M \to M \) is a local diffeomorphism at \( 0 \), there exists an convex open neighborhood \( U_m \) of \( 0 \) in \( \TBundle_m M \) and an open neighborhood \( V_m \) of \( m \) in \( M \) such that the restriction of \( \exp_m \) to \( U_m \) is a diffeomorphism onto \( V_m \).
	By shrinking \( U \), we may assume \( U = \bigUnion_{m \in M} U_m \) (which still is an open neighborhood of the zero section in \( \TBundle M \)).
	Note that
	\begin{equation}
		V = \set{(m, p) \in M \times M \given p \in V_m}
	\end{equation}
	is an open neighborhood of the diagonal in \( M \times M \).
	Now, it is straightforward to see that the map
	\begin{equation}
		\pr_M \times \exp: \TBundle M \supseteq U \to V, \qquad (m, X) \mapsto (m, \exp_m (X))
	\end{equation}
	is a diffeomorphism with smooth inverse given by \( (m, p) \mapsto \bigl(m, \exp_m^{-1}(p)\bigr) \).
	Moreover, we clearly have \( \exp(m, 0) = m \) for all \( m \in M \).
	In summary, \( \exp: \TBundle M \supseteq U \to M \) is a local addition on \( M \).
\end{proof}

\begin{remark}
	The proof of \cref{prop:riemannian:expGivesLocalAddition} shows that the distance-minimizing property of the Riemannian exponential map is not required in order to get a local addition.
	Thus, more generally, every spray on \( M \) for which the integral curves exist and are unique yields a local addition if the exponential map defined by the spray is a local diffeomorphism (which is automatic in the Banach setting, see \parencite[Theorem~IV.4.1]{Lang1999}).
\end{remark}

Given a graded Riemannian metric, there could exist none or many exponential maps conforming to the above definition.

In finite dimensions, the exponential map is constructed as follows:
\begin{enumerate}
	\item
		Calculate the Levi--Civita connection using the Koszul formula.
	\item
		Establish existence and uniqueness of solutions for the geodesic equation (and show that solutions depend smoothly on the initial data).
	\item
		Show that the exponential map is a local diffeomorphism.
	\item
		Verify that geodesics are distance-minimizing.
\end{enumerate}
All of the above steps are confronted with problems in infinite dimensions.
First, the Koszul formula ensures only uniqueness and no longer the existence of the Levi--Civita connection, because in general the metric is not strongly non-degenerate.
Moreover, existence and uniqueness of solutions of the initial value problem for ordinary differential equations in Fréchet spaces is no longer guaranteed (see, \eg, \parencite[Appendix~A.4]{Michor2006} for counterexamples).
Hence, it may happen that there exists no solution to the geodesics equation and hence also no Riemannian exponential map.
Even if \( \exp \) exists it does not need to be a local diffeomorphism.
This was shown, for example, by \textcite{ConstantinKolev2002} for the exponential map of the \( \LTwoFunctionSpace \)-metric on the group of orientation-preserving diffeomorphisms of the circle.
Finally, the work of \textcite[Section~5]{MichorMumford2005} shows that geodesics are not necessarily locally distance-minimizing paths.

Nonetheless, these problems often only occur for metrics of Sobolev type with low regularity.
For example, for \( k \geq 1 \), the \( H^k \)-metric on the group of orientation preserving diffeomorphisms of the circle has an exponential map in the sense of \cref{defn:riemannianGeometry:exponentialMap}, see \parencite{ConstantinKolev2003} for details.
We refer the reader to \parencite{BauerBruverisEtAl2013} for a recent review concerning well-posedness of the geodesic equation and positivity of the geodesic distance in various examples of geometric interest.

\begin{remark}
	It is an open question how exponential maps corresponding to different values of $r$ are related to each other.
	Examples suggest that the existence of an $r$-exponential map is an indicator for the existence of all \( r' \)-exponential maps for all \( r' > r \).
	This phenomenon is closely related to the \enquote{regularity of geodesics} \parencite{EbinMarsden1970}, that is, geodesics relative to one Riemannian metric $g^r$ transform under suitable extra conditions into geodesics with respect to another $g^{r'}$.
\end{remark}

\subsubsection{Construction of the slice}
With this preparation at hand, we can now use \cref{prop:slice:sliceTheoremGeneral} to construct the slice.
\begin{thm}[Fréchet slice theorem]
	\label{prop::liegroup:sliceTheorem}
	Let \( (M, G, \Upsilon) \) be tame Fréchet Lie group action.
	The $G$-action admits a linear slice at the point \( m \in M \) if the following conditions are fulfilled:
	\begin{enumerate}
		\item
			The \( G \)-action is proper.
		\item
			The stabilizer $G_{m}$ is a principal tame Fréchet Lie subgroup of $G$.
		\item
			The linearized action \( \tangent_e \Upsilon_m: \LieA{g} \to \TBundle_m M \) is a tame regular map.
		\item
			$M$ carries a $G$-invariant, locally equivalent, graded Riemannian metric $g^k$ such that the $r$-exponential map \( \exp \) exists for some $r$.
			Furthermore, assume that the restriction of \( \exp \) to the normal bundle \( \NBundle O \) of the orbit \( O = G \cdot m \) is an equivariant local diffeomorphism at every point of the zero section.
			\qedhere
	\end{enumerate}
\end{thm}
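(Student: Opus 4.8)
The plan is to verify, one by one, the four hypotheses of the general slice \cref{prop:slice:sliceTheoremGeneral} and then invoke it directly; the four conditions of the present theorem are tailored precisely to produce them. Properness is assumed outright. Hypothesis \iref{i:slice:sliceTheoremGeneral:stabilizer} is nothing but assumption (ii), since a principal tame Fréchet Lie subgroup is in particular a principal Lie subgroup. For hypothesis \iref{i:slice:sliceTheoremGeneral:orbit} I would quote \cref{prop:slice:orbitClosedSubmanifold}: properness together with the principal tame stabilizer and the tame regularity of \( \tangent_e \Upsilon_m \) (assumptions (i)--(iii)) show that \( O = G \cdot m \) is a closed split submanifold of \( M \). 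The realization of \( \NBundle O \) as a \( G \)-invariant split subbundle of \( \restr{\TBundle M}{O} \) is then given by the construction \( \NBundle O = \bigDisjUnion_{g \in G} g \ldot N_m \) following that proposition; choosing the tame complement \( N_m \) of \( \img(\tangent_e \Upsilon_m) \) to be \( G_m \)-invariant --- which is possible by averaging over the compact stabilizer \( G_m \), exactly as in the proof of that proposition --- makes this subbundle well-defined and \( G \)-invariant.

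The heart of the argument is hypothesis \iref{i:slice:sliceTheoremGeneral:localAddition}. By \cref{prop:riemannian:expGivesLocalAddition}, the \( r \)-exponential map \( \exp \) provided by assumption (iv) is, after shrinking its domain, a local addition on \( M \). It then remains to see that \( \eta \defeq \exp \) is \( G \)-equivariant and adapted to \( O \). Equivariance I would deduce from the \( G \)-invariance of \( g^k \): each \( g \in G \) acts by isometries and therefore preserves the lengths \( l^r \), the distances \( d^r \) and the initial-velocity condition that characterize \( \exp \) in \cref{defn:riemannianGeometry:exponentialMap}. Consequently the map \( X \mapsto g^{-1} \cdot \exp(g \ldot X) \) is again an \( r \)-exponential map, and it agrees with \( \exp \) on the domain where the latter is a diffeomorphism, which yields \( g \cdot \exp(X) = \exp(g \ldot X) \). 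The adaptedness to \( O \) in the sense of \cref{defn:slice:localAddition:adapted}, namely that \( \restr{\exp}{\NBundle O} \) is a local diffeomorphism along the zero section, is exactly the extra clause in assumption (iv).

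For the metric hypothesis \iref{i:slice:sliceTheoremGeneral:metric}, the \( G \)-invariant topological metric \( d \) on \( M \) compatible with the manifold topology is supplied by \cref{prop::riemannianGeometry:locEquivalentMetricInducesLengthMetric} applied to the locally equivalent graded metric \( g^k \); its \( G \)-invariance is inherited from that of \( g^k \) through the \( G \)-invariance of the length functionals \( l^k \). The required \( G \)-invariant fiber metric \( \rho \) I would build directly from the semi-inner products by setting
\[
	\rho_m(X, Y) \defeq \sum_{k} 2^{-k} \frac{\norm{X - Y}^k_m}{1 + \norm{X - Y}^k_m},
\]
which is a genuine metric on each fiber \( \TBundle_m M \) because the \( \normDot^k_m \) form a fundamental system and hence separate points; it is compatible with the fiber topology and, since \( \norm{g \ldot X - g \ldot Y}^k_{g \cdot m} = \norm{X - Y}^k_m \) by \( G \)-invariance of \( g^k \), it is \( G \)-invariant. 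With all four hypotheses in place, \cref{prop:slice:sliceTheoremGeneral} produces a linear slice at \( m \).

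The step I expect to require the most care is the \( G \)-equivariance of the exponential map: one must ensure that it is the \emph{distance-minimizing} characterization, and not merely a geodesic differential equation (whose solution theory is unavailable in the Fréchet setting), that transports correctly under the isometric \( G \)-action, and that the two resulting \( r \)-exponential maps can indeed be identified on a common domain. Everything else amounts to assembling the previously established propositions and checking the naturality of the constructions under the group action.
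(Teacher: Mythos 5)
Your overall strategy coincides with the paper's own proof: both verify the four hypotheses of \cref{prop:slice:sliceTheoremGeneral} by citing \cref{prop:slice:orbitClosedSubmanifold} for the orbit and its normal bundle, \cref{prop:riemannian:expGivesLocalAddition} for the local addition, and \cref{prop::riemannianGeometry:locEquivalentMetricInducesLengthMetric} for the length metric \( d \); your explicit formula for the fiber metric \( \rho \) is just the standard construction that the paper leaves implicit. The one genuine gap is precisely the step you yourself flag as delicate: the attempted derivation of the \( G \)-equivariance of \( \exp \) from the \( G \)-invariance of \( g^k \). Your argument correctly shows that \( X \mapsto g^{-1} \cdot \exp(g \ldot X) \) is again an \( r \)-exponential map, but the conclusion that it therefore ``agrees with \( \exp \) on the domain where the latter is a diffeomorphism'' does not follow: \cref{defn:riemannianGeometry:exponentialMap} only requires the radial curves \( t \mapsto \exp(tX) \) to be \( r \)-length minimizers with initial velocity \( X \), and such minimizers need not be unique --- the paper explicitly remarks that a graded Riemannian metric may admit ``none or many'' exponential maps conforming to the definition. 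In the Fréchet setting there is no geodesic equation with a uniqueness theory to fall back on, so two \( r \)-exponential maps for the same invariant metric can genuinely differ, and the local diffeomorphy of \( \exp_m \) provides no identification between them.

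The paper sidesteps this issue entirely: the word ``equivariant'' in hypothesis (iv) is part of the \emph{assumption} --- the restriction of \( \exp \) to \( \NBundle O \) is hypothesized, not proved, to be an equivariant local diffeomorphism along the zero section. You read that clause as supplying only the local-diffeomorphism property and then tried to manufacture the equivariance yourself; instead you should simply invoke it. Note also that this restricted equivariance suffices: although hypothesis \iref{i:slice:sliceTheoremGeneral:localAddition} of \cref{prop:slice:sliceTheoremGeneral} nominally asks for a \( G \)-equivariant local addition on a \( G \)-invariant neighborhood in \( \TBundle M \), the proofs of \cref{prop:slice:sliceTheoremGeneral:diffeoOrbit,prop:slice:sliceTheoremGeneral:sliceConstruction} use \( \eta \) and its equivariance only on \( U \intersect \NBundle O \). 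With the equivariance step replaced by a citation of assumption (iv), the remainder of your argument --- including the \( G_m \)-averaging that makes the tame complement \( N_m \) invariant, so that \( \NBundle O = \bigDisjUnion_{g \in G} g \ldot N_m \) is well defined and \( G \)-invariant --- matches the paper's proof.
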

We emphasize that only a (generalized) inverse of the map \( \tangent_e \Upsilon_m \) is required in \cref{prop::liegroup:sliceTheorem}, although the Nash--Moser inverse function theorem needs a tame family of inverses.
\begin{proof}
	By \cref{prop:slice:orbitClosedSubmanifold}, the orbit $O = G \cdot m$ is a split embedded submanifold of \( M \) and the normal bundle \( \NBundle O \) is a smooth subbundle of \( \restr{\TBundle M}{O} \).
	According to \cref{prop:riemannian:expGivesLocalAddition}, the exponential map is a local addition.
	The local addition is adapted to the orbit, because the restriction of \( \exp \) to the normal bundle \( \NBundle O \) is an equivariant local diffeomorphism at every point of the zero section by assumption.
	Furthermore, the graded metric $g^k$ induces pointwise seminorms which combine to a $G$-invariant, compatible metric $\rho_m$ on $\TBundle_m M$.
	\Cref{prop::riemannianGeometry:locEquivalentMetricInducesLengthMetric} yields a length-metric $d$ on $M$ which is compatible with the manifold topology.
	The metric $d$ inherits the $G$-invariance of $g^k$ by inspection of the proof of \cref{prop::riemannianGeometry:locEquivalentMetricInducesLengthMetric}.
	Thus, all the assumptions of \cref{prop:slice:sliceTheoremGeneral} are satisfied and so a slice at \( m \) exists.
\end{proof}

\subsection{Slice theorem for product actions}
\label{sec:sliceTheorem:productActions}

Let \( M \) and \( N \) be \( G \)-manifolds.
Consider the diagonal \( G \)-action
\begin{equation}
	g \cdot (m, n) = (g \cdot m, g \cdot n)
\end{equation}
on \( M \times N \).
We will now show how to construct a slice for this action.
\begin{prop}
	\label{prop:slice:sliceForProduct}
	Let \( G \) be a Lie group that acts smoothly on the manifolds \( M \) and \( N \).
	Assume that the \( G \)-action admits a slice \( S_m \subseteq M \) at a given point \( m \in M \) and that the \( G_m \)-action on \( N \) admits a slice \( S_n \) at the point \( n \in N \).
	Then, \( S_m \times S_n \) is a slice at \( (m, n) \) for the diagonal action of \( G \) on the product \( M \times N \).
\end{prop}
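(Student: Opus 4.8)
The plan is to put \( H \defeq G_{(m,n)} \) and verify the four slice axioms for \( S \defeq S_m \times S_n \) at \( (m,n) \). First I would identify the relevant stabilizers: since the action is diagonal, \( G_{(m,n)} = G_m \intersect G_n \), and because \( S_n \) is a slice for the \emph{\( G_m \)-action} on \( N \) at \( n \), its base-point stabilizer is exactly \( (G_m)_n = G_m \intersect G_n = H \). Thus the stabilizer of \( (m,n) \) for the \( G \)-action coincides with the stabilizer of \( n \) for the \( G_m \)-action. Moreover, \( H \) is a principal Lie subgroup of \( G \): it is principal in \( G_m \) (being the base-point stabilizer of the slice \( S_n \)) and \( G_m \) is principal in \( G \) (being the base-point stabilizer of \( S_m \)); principality is transitive, because composing a local section of \( G \to G \slash G_m \) with one of \( G_m \to G_m \slash H \) trivializes the fibration \( G \slash H \to G \slash G_m \) with fiber \( G_m \slash H \), exhibiting \( G \to G \slash H \) as a principal \( H \)-bundle.

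Axioms \iref{i::slice:SliceDefSliceInvariantUnderStab} and \iref{i::slice:SliceDefOnlyStabNotMoveSlice} are then immediate. For \iref{i::slice:SliceDefSliceInvariantUnderStab}, the inclusion \( H \subseteq G_m \) leaves \( S_m \) invariant by \iref{i::slice:SliceDefSliceInvariantUnderStab} for \( S_m \), while \( H = (G_m)_n \) leaves \( S_n \) invariant by \iref{i::slice:SliceDefSliceInvariantUnderStab} for the \( G_m \)-slice \( S_n \); hence \( H \) preserves \( S \). For \iref{i::slice:SliceDefOnlyStabNotMoveSlice}, if \( g \cdot (s_m, s_n) \in S \) for some \( (s_m, s_n) \in S \), then \( g \cdot s_m \in S_m \) forces \( g \in G_m \) by \iref{i::slice:SliceDefOnlyStabNotMoveSlice} for \( S_m \); now \( g \in G_m \) together with \( g \cdot s_n \in S_n \) forces \( g \in (G_m)_n = H \) by \iref{i::slice:SliceDefOnlyStabNotMoveSlice} for \( S_n \).

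The core is \iref{i::slice:SliceDefLocallyProduct}. Let \( \chi_M: G \slash G_m \supseteq U_M \to G \) and \( \chi_N: G_m \slash H \supseteq U_N \to G_m \) be the local sections supplied by the two slices, with associated diffeomorphisms \( \chi_M^{S_m}(\equivClass{g}, s) = \chi_M(\equivClass{g}) \cdot s \) and \( \chi_N^{S_n}(\equivClass{h}, s) = \chi_N(\equivClass{h}) \cdot s \) onto the respective slice neighborhoods. By the previous paragraph, \( \chi \defeq \chi_M \cdot \chi_N \) is a local section of \( G \to G \slash H \) near \( \equivClass{e} \), and in these coordinates
\[
	\chi^S\bigl((\equivClass{g}, \equivClass{h}), (s_m, s_n)\bigr) = \bigl(\chi_M(\equivClass{g})\chi_N(\equivClass{h}) \cdot s_m,\ \chi_M(\equivClass{g})\chi_N(\equivClass{h}) \cdot s_n\bigr).
\]
I would then show this factors as \( \chi^S = \Psi_2 \circ \Psi_1 \), where \( \Psi_1\bigl((\equivClass{g}, \equivClass{h}), (s_m, s_n)\bigr) = \bigl(\equivClass{g}, \chi_N(\equivClass{h}) \cdot s_m, \chi_N^{S_n}(\equivClass{h}, s_n)\bigr) \) and \( \Psi_2\bigl(\equivClass{g}, s_m', p\bigr) = \bigl(\chi_M^{S_m}(\equivClass{g}, s_m'), \chi_M(\equivClass{g}) \cdot p\bigr) \). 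Each factor is a diffeomorphism onto an open set: in \( \Psi_1 \), the element \( \chi_N(\equivClass{h}) \in G_m \) preserves \( S_m \), and since \( \chi_N^{S_n} \) is a diffeomorphism one recovers \( (\equivClass{h}, s_n) \) and then \( s_m = \chi_N(\equivClass{h})^{-1} \cdot s_m' \) smoothly; in \( \Psi_2 \), the map \( \chi_M^{S_m} \) is a diffeomorphism, recovering \( (\equivClass{g}, s_m') \), after which \( p = \chi_M(\equivClass{g})^{-1} \cdot y \) is recovered. Hence \( \chi^S \) is a diffeomorphism onto an open neighborhood of \( (m, n) \), which establishes \iref{i::slice:SliceDefLocallyProduct}.

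Finally, for \iref{i::slice:SliceDefPartialSliceSubmanifold} I would identify the partial slice explicitly. By \cref{prop::slice:mHasMaximalStabilizerOfWholeSlice} applied to both slices, every \( (s_m, s_n) \in S \) satisfies \( G_{s_m} \subseteq G_m \) and \( (G_m)_{s_n} \subseteq H \), so \( G_{(s_m, s_n)} = G_{s_m} \intersect (G_m)_{s_n} \subseteq H \). When \( G_{(s_m, s_n)} \) is conjugate to \( H \), compactness of the stabilizers together with \cref{prop::compactLieSubgroup:conjugatedSubgroupEqual} forces \( G_{(s_m, s_n)} = H \), whence \( (G_m)_{s_n} = H \) and \( H \subseteq G_{s_m} \); conversely these two conditions give back \( G_{(s_m, s_n)} = H \). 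Thus \( S_{(H)} = (S_m)_{\geq H} \times (S_n)_H \), where \( (S_m)_{\geq H} \) is the \( H \)-fixed-point set in \( S_m \) and \( (S_n)_H = \set{s_n \in S_n \given (G_m)_{s_n} = H} \) is the isotropy type of \( n \) for the \( G_m \)-action. For a linear slice \( S_m \) the former is the trace of the closed subspace of \( H \)-fixed vectors, hence a closed submanifold, while the latter is a closed submanifold by \iref{i::slice:SliceDefPartialSliceSubmanifold} for \( S_n \) combined with \cref{prop:properAction:sliceOrbitTypeEqualsSliceStab} applied to the (proper) \( G_m \)-action; a product of closed submanifolds, \( S_{(H)} \) is closed submanifold of \( S \). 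I expect this last axiom to be the main obstacle: the arguments for \iref{i::slice:SliceDefSliceInvariantUnderStab}--\iref{i::slice:SliceDefLocallyProduct} are purely formal, whereas the clean product description of the partial slice rests on pinning down the \( G \)-conjugacy class of \( G_{(s_m,s_n)} \) from the inclusion \( G_{(s_m,s_n)} \subseteq H \), a step that genuinely uses compactness of the stabilizers (equivalently, properness) via \cref{prop::compactLieSubgroup:conjugatedSubgroupEqual}; without such control subconjugacy need not be antisymmetric and the identification of \( S_{(H)} \) may fail.
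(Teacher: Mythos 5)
Your verification of \iref{i::slice:SliceDefSliceInvariantUnderStab}, \iref{i::slice:SliceDefOnlyStabNotMoveSlice} and \iref{i::slice:SliceDefLocallyProduct} is correct and follows essentially the same route as the paper: the identification \( G_{(m,n)} = G_m \intersect G_n = (G_m)_n \), the composed section \( \chi(\equivClass{g}, \equivClass{h}) = \chi_M(\equivClass{g})\,\chi_N(\equivClass{h}) \) together with the induced local trivializations exhibiting \( G_{(m,n)} \) as a principal Lie subgroup, and the same mechanism for the slice diffeomorphism. Where the paper merely asserts that the map \( (\equivClass{g}, \equivClass{h}, s_m, s_n) \mapsto \chi_M(\equivClass{g})\chi_N(\equivClass{h}) \cdot (s_m, s_n) \) is a diffeomorphism onto its image and checks unique solvability of the two component equations (using, as you do, that \( \chi_N(\equivClass{h}) \cdot s_m \in S_m \) by \iref{i::slice:SliceDefSliceInvariantUnderStab}), your factorization \( \chi^S = \Psi_2 \circ \Psi_1 \) makes the smoothness of the inverse explicit, which is a welcome refinement; the only point you should still make precise is that \( (\equivClass{h}, s) \mapsto \chi_N(\equivClass{h}) \cdot s \) is smooth as a map into the submanifold \( S_m \), not merely into \( M \).

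The genuine divergence concerns the last axiom, and there your proof has a gap relative to the stated hypotheses — one you yourself flag. The proposition assumes only a smooth action and arbitrary slices, whereas your verification of \iref{i::slice:SliceDefPartialSliceSubmanifold} invokes \cref{prop::compactLieSubgroup:conjugatedSubgroupEqual} and \cref{prop:properAction:sliceOrbitTypeEqualsSliceStab}, both of which require compact stabilizers, hence properness, and you additionally need \( S_m \) to be a \emph{linear} slice in order for \( (S_m)_{\geq H} \) to be the trace of a closed subspace of fixed vectors. Under these extra assumptions your identification \( S_{(H)} = (S_m)_{\geq H} \times (S_n)_H \) with \( H = G_{(m,n)} \) is correct (note that properness of the \( G \)-action on \( M \) alone already forces properness of the diagonal action on \( M \times N \)). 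It is worth knowing, however, that the paper's own proof never addresses \iref{i::slice:SliceDefPartialSliceSubmanifold} at all: it instead verifies \iref{i:slice:linearSlice}, tacitly assuming the given slices \( S_m \) and \( S_n \) are linear, in which case \iref{i::slice:SliceDefPartialSliceSubmanifold} for the product would again follow from \cref{prop:properAction:sliceOrbitTypeEqualsSliceStab} only in the proper case. So where the paper is detailed, your argument matches it; where your argument needs extra hypotheses, you have in fact located a lacuna in the statement and the paper's proof — without properness, subconjugacy is not antisymmetric and the clean description of the partial slice \( S_{(H)} \) can fail, exactly as you say.
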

\begin{proof}
	The product \( S_m \times S_n \) is clearly a submanifold of \( M \times N \) containing the point \( (m, n) \).
	For \iref{i::slice:SliceDefSliceInvariantUnderStab} and \iref{i::slice:SliceDefOnlyStabNotMoveSlice}, note that
	\begin{equation}
		G_{(m, n)} = G_m \intersect G_n = (G_m)_n.
	\end{equation}
	Hence, using \iref{i::slice:SliceDefSliceInvariantUnderStab} for each slice, we see that \( S_m \times S_n \) is invariant under the stabilizer \( G_{(m, n)} \).
	Now let \( g \in G \), \( s_m \in S_m \) and \( s_n \in S_n \) be such that \( g \cdot (s_m, s_n) \in S_m \times S_n \).
	Then, \( g \cdot s_m \in S_m \) so that \iref{i::slice:SliceDefOnlyStabNotMoveSlice} applied to \( S_m \) implies \( g \in G_m \).
	Similarly, \( g \cdot s_n \in S_n \) and thus \( g \in (G_m)_n = G_{(m, n)} \).
	Next, we show that \iref{i::slice:SliceDefLocallyProduct} is fulfilled.
	For this purpose, let \( \chi_M: G \slash G_m \supseteq U_M \to G \) and \( \chi_N: G_m \slash (G_m)_n \supseteq U_N \to G_m \) be local sections, whose existence is ensured by \iref{i::slice:SliceDefLocallyProduct} (separately applied to each slice).
	The local diffeomorphisms \( G \isomorph U_M \times G_m \) and \( G_m \isomorph U_N \times (G_m)_n \) induced by these sections combine to a local diffeomorphism \( G \isomorph U_M \times U_N \times G_{(m, n)} \), which shows that \( G_{(m, n)} \) is a principal Lie subgroup of \( G \).
	In the remainder, we will suppress this local diffeomorphism in our notation and consider \( U_M \times U_N \) as an open neighborhood of the identity coset in \( G \slash G_{(m, n)} \).
	Under this identification, the map
	\begin{equation*}
		\chi: G \slash G_m \times G_m \slash (G_m)_n \supseteq U_M \times U_N \to G,
		\quad
		(\equivClass{g}, \equivClass{h}) \mapsto \chi_M(\equivClass{g}) \chi_N(\equivClass{h})
	\end{equation*}
	is a local section of \( G \to G \slash G_{(m, n)} \).
	It is now straightforward to see that 
	\begin{equation*}
		U_M \times U_N \times S_m \times S_n \to M \times N,
		\quad
		(\equivClass{g}, \equivClass{h}, s_m, s_n) \mapsto \chi_M(\equivClass{g}) \chi_N(\equivClass{h}) \cdot (s_m, s_n)
	\end{equation*}
	is a diffeomorphism onto its image and so \iref{i::slice:SliceDefLocallyProduct} holds.
	Indeed, the equation \( m' = \chi_M(\equivClass{g}) \chi_N(\equivClass{h}) \cdot s_m \) uniquely determines \( \equivClass{g} \) and \( \chi_N(\equivClass{h}) \cdot s_m \), because the latter is an element of \( S_m \) by \iref{i::slice:SliceDefSliceInvariantUnderStab}.
	Then, \( \equivClass{h} \) and \( s_n \) are uniquely determined by \( n' = \chi_M(\equivClass{g}) \chi_N(\equivClass{h}) \cdot s_n \), too.
	Finally, \iref{i:slice:linearSlice} is clearly satisfied, because the slices \( S_m \) and \( S_n \) are equivariantly diffeomorphic to an open subset in a certain linear representation space.
\end{proof}

\section{Orbit type stratification}
\label{sec:orbitTypeStrat}
In this section, we investigate the natural stratification of \( M \) and of the orbit space \( M / G \) by orbit types for a proper Lie group action of \( G \) on \( M \) admitting a slice at every point. 
We refer the reader to \cref{sec::stratification} for the notion of a stratified space and related background material.

\subsection{Orbit type stratification of \texorpdfstring{$M$}{M}}
\begin{prop} \label{prop::lieGroup:orbitTypeLocallyClosedLocalSubmanifold}
	Let \( M \) be a \( G \)-manifold admitting a slice at every point.
	For every stabilizer subgroup \( H \subseteq G \) the subset \( M_{(H)} \) of orbit type \( (H) \) is a locally closed submanifold of \( M \).
\end{prop}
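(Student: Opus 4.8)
The plan is to reduce everything to a single slice neighborhood and to identify the orbit type subset there with a product whose submanifold property is guaranteed by \iref{i::slice:SliceDefPartialSliceSubmanifold}. Since being a locally closed submanifold is a local condition, it suffices to exhibit, for each \( m \in M_{(H)} \), an open neighborhood \( V \subseteq M \) in which \( M_{(H)} \) appears as a closed submanifold. So I would fix \( m \in M_{(H)} \), note that then \( (G_m) = (H) \), and invoke the hypothesis to obtain a slice \( S \) at \( m \) together with the diffeomorphism \( \chi^S \colon U \times S \to V \) of \iref{i::slice:SliceDefLocallyProduct}, where \( U \subseteq G \slash G_m \) is an open neighborhood of the identity coset and \( \chi \) the underlying local section of \( G \to G \slash G_m \).

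The key step will be to prove the identification
\[
	M_{(H)} \intersect V = \chi^S\bigl(U \times S_{(G_m)}\bigr).
\]
To establish it, I would take an arbitrary point \( v = \chi^S(\equivClass{g}, s) = \chi(\equivClass{g}) \cdot s \) of \( V \) and use the equivariance relation \( G_{\chi(\equivClass{g}) \cdot s} = \chi(\equivClass{g}) \, G_s \, \chi(\equivClass{g})^{-1} \). This shows that the stabilizer of \( v \) is conjugate to the stabilizer of \( s \), so \( v \) and \( s \) have the same orbit type. Hence \( v \in M_{(H)} \) if and only if \( (G_s) = (H) = (G_m) \), that is, if and only if \( s \) lies in the partial slice \( S_{(G_m)} = \set{s \in S \given G_s \text{ is conjugate to } G_m} \). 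In words, the orbit type is constant along the group directions of the product chart and is detected entirely by the slice coordinate.

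With the identification in hand, I would simply invoke \iref{i::slice:SliceDefPartialSliceSubmanifold}, according to which \( S_{(G_m)} \) is a closed submanifold of \( S \). Then \( U \times S_{(G_m)} \) is a closed submanifold of \( U \times S \), and, transporting it along the diffeomorphism \( \chi^S \), the set \( M_{(H)} \intersect V \) is a closed submanifold of the open set \( V \subseteq M \). As \( m \in M_{(H)} \) was arbitrary, this proves that \( M_{(H)} \) is a locally closed submanifold of \( M \); the local submanifold charts produced in this way are restrictions of ambient charts and therefore patch together to a single smooth structure on \( M_{(H)} \) making the inclusion an embedding.

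I expect the only genuinely delicate point to be the identification \( M_{(H)} \intersect V = \chi^S(U \times S_{(G_m)}) \), which hinges on the observation that translating a slice point by \( \chi(\equivClass{g}) \) conjugates its stabilizer and hence leaves its orbit type unchanged. Everything else is bookkeeping: once this is in place, the submanifold conclusion follows at once from property \iref{i::slice:SliceDefPartialSliceSubmanifold} of a slice, and no further use of properness or of the remaining slice axioms is needed.
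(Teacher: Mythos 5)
Your proposal is correct and follows essentially the same route as the paper: both reduce to the slice neighborhood \( V \), use the conjugacy of stabilizers along orbits to identify \( M_{(H)} \intersect V \) with the image of \( U \times S_{(G_m)} \) under the slice diffeomorphism \( \chi^S \), and then conclude via the slice axiom stating that the partial slice \( S_{(G_m)} \) is a closed submanifold of \( S \). The only cosmetic difference is that the paper additionally remarks that \( S \) is closed in \( V \), which is already implicit in your identification since \( \chi^S \) maps \( U \times S \) onto all of \( V \).
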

\begin{proof}
	Since the statement is local, it is enough to consider a slice neighborhood at a given point \( m \in M_{(H)} \).
	Thus, let \( S \) be a slice at \( m \). 
	Choose a local section \( \chi: G/G_m \supseteq U \to G \) defined in an open neighborhood \( U \) of the identity coset in such a way that the map
	\begin{equation}
		\chi^S: U \times S \to M, \qquad (\equivClass{g}, s) \mapsto \chi(\equivClass{g}) \cdot s
	\end{equation}
	is a diffeomorphism onto an open neighborhood \( V \subseteq M \) of \( m \), see \iref{i::slice:SliceDefLocallyProduct}.
	Since points on the same \( G \)-orbit have conjugate stabilizers, a point \( v = \chi(\equivClass{g}) \cdot s \in V \) lies in \( M_{(H)} \) if and only if the corresponding point \( s \in S \) has orbit type \( (H) \).
	Therefore, \( \chi^S \) maps \( U \times S_{(H)} \) diffeomorphically onto \( V_{(H)} = V \intersect M_{(H)} \).
	Now the claim follows, because the partial slice \( S_{(H)} = S_{(G_m)} \) is a closed submanifold of \( S \) according to \iref{i::slice:SliceDefPartialSliceSubmanifold}, and because the slice itself is closed in \( V \).
\end{proof}

\begin{thm}[Orbit Type Stratification] \label{prop::stratification:orbitTypePartionIsStratification}
	Let \( M \) be a \( G \)-manifold with proper \( G \)-action admitting a slice at every point.
	Assume that the approximation property \( M_{\geq (H)} \subseteq \closureSet{M_{(H)}} \) holds for every stabilizer subgroup \( H \subseteq G \).
	Then, the partition of \( M \) into orbit type submanifolds \( M_{(H)} \) is a stratification.
\end{thm}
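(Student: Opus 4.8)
The plan is to verify, against the notion of a stratification recalled in \cref{sec::stratification}, three things: that the pieces \( M_{(H)} \) are pairwise disjoint locally closed submanifolds covering \( M \), that they satisfy the condition of the frontier, and that around each point the decomposition has the local normal form prescribed by a slice. The first ingredient is essentially already available: the sets \( M_{(H)} \) partition \( M \) by the very definition of orbit type, and each is a locally closed submanifold by \cref{prop::lieGroup:orbitTypeLocallyClosedLocalSubmanifold}. Crucially, I would \emph{not} attempt to establish local finiteness, since a proper action may have infinitely many orbit types accumulating at a point; the generalized notion of stratification employed here is designed to dispense with local finiteness, and the relations between strata are instead governed entirely by the partial order of orbit types.

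The heart of the argument is the condition of the frontier, which I would phrase as: if \( (H) \neq (H') \) and \( M_{(H)} \intersect \closureSet{M_{(H')}} \neq \emptyset \), then \( M_{(H)} \subseteq \closureSet{M_{(H')}} \). First I would fix a point \( m \) in the intersection and apply the neighboring subgroups theorem \cref{prop::slice:sliceNeighbourhoodHasSubconjugatedStabilizer}, which yields a neighborhood \( V \) of \( m \) with \( V \subseteq M_{\leq (H)} \). Since \( m \in \closureSet{M_{(H')}} \), the neighborhood \( V \) meets \( M_{(H')} \), so \( V \) contains a point of orbit type \( (H') \); comparison with \( V \subseteq M_{\leq (H)} \) then gives \( (H') \leq (H) \), and together with \( (H) \neq (H') \), this is a genuine inequality in the partial order of \cref{prop::properAction:orbitTypeIsPartialOrder}. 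Next I would observe that \emph{every} \( p \in M_{(H)} \) satisfies \( (H') \leq (G_p) = (H) \), that is \( M_{(H)} \subseteq M_{\geq (H')} \). Finally I would invoke the approximation hypothesis \( M_{\geq (H')} \subseteq \closureSet{M_{(H')}} \) to conclude \( M_{(H)} \subseteq \closureSet{M_{(H')}} \), as required.

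To pin down the local structure demanded of a stratification, I would translate the slice picture into the language of the decomposition. Around any \( m \in M_{(H)} \), the slice diffeomorphism \( \chi^S\colon U \times S \to V \) carries \( U \times S_{(H)} \) onto \( V_{(H)} \), exactly as in the proof of \cref{prop::lieGroup:orbitTypeLocallyClosedLocalSubmanifold}, so the orbit type decomposition of \( V \) is the product of \( U \) with the orbit type decomposition of the slice \( S \), in which \( m \) corresponds to the distinguished stratum \( S_{(G_m)} \). Combined with the openness statement that \( M_{(H)} \) is open in \( M_{\geq (H)} \) from \cref{prop:properAction:sliceOrbitTypeOpenInSupOrbitType}, this provides the local model and shows that the strata fit together coherently, completing the verification.

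I expect the main obstacle to be bookkeeping the direction of the order and isolating precisely where each hypothesis enters. In finite dimensions the analogue of the approximation property \( M_{\geq (H)} \subseteq \closureSet{M_{(H)}} \) is automatic—it follows from linearizing on a slice together with a transversality and density argument—and the frontier condition can additionally be supplemented by a dimension count; here neither tool is available, so the entire relational structure must be driven by the partial order of \cref{prop::properAction:orbitTypeIsPartialOrder}, by the neighboring subgroups theorem, and by the extra approximation assumption, which is exactly the ingredient supplying the reverse inclusion into the closure. Keeping the order convention straight—recalling that a larger stabilizer corresponds to the larger orbit type and to the more singular stratum, which lies in the boundary of the less singular ones—is the one place where a sign-type error is easy to make.
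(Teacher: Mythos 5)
Your core argument is the paper's own, merely packaged differently: you verify the frontier condition directly, while the paper first establishes the closure formula \( \closureSet{M_{(H)}} = M_{\geq (H)} \) — the inclusion \( \subseteq \) via the neighboring subgroups theorem \cref{prop::slice:sliceNeighbourhoodHasSubconjugatedStabilizer} applied in a slice neighborhood, the inclusion \( \supseteq \) being the approximation hypothesis — and then invokes \cref{prop::stratification:ClosureFormulaImpliesFrontierCondition} together with the partial order from \cref{prop::properAction:orbitTypeIsPartialOrder}. Your two main steps (neighboring subgroups at a point of \( M_{(H)} \intersect \closureSet{M_{(H')}} \) yielding \( (H') \leq (H) \); approximation yielding \( M_{(H)} \subseteq M_{\geq (H')} \subseteq \closureSet{M_{(H')}} \)) reproduce exactly that content. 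Your third paragraph is superfluous: \cref{def:stratification} demands only a Hausdorff ambient space, locally closed manifold pieces, and the frontier condition — no local normal form, no local triviality, and, as you correctly note, no local finiteness — so nothing beyond \cref{prop::lieGroup:orbitTypeLocallyClosedLocalSubmanifold} and the frontier condition needs checking.

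Two genuine issues remain. First, you verify only half of the frontier condition. In this paper's \cref{def:stratification}, the frontier condition has a second clause \iref{i:stratification:frontierConditionIntersection}: one must also show \( M_{(H')} \intersect \closureSet{M_{(H)}} = \emptyset \), and the appendix remarks explicitly that in the infinite-dimensional setting this clause is not automatic (it is automatic only under extra hypotheses, \eg when all strata are Banach manifolds); it is precisely what makes the incidence relation a partial order. Fortunately the repair is one line with material you already have: if \( q \in M_{(H')} \intersect \closureSet{M_{(H)}} \), your neighboring-subgroups argument at \( q \) gives \( (H) \leq (H') \), and combined with \( (H') \leq (H) \), antisymmetry — which is \cref{prop::properAction:orbitTypeIsPartialOrder}, resting on properness via \cref{prop::compactLieSubgroup:conjugatedSubgroupEqual} — forces \( (H) = (H') \), a contradiction; alternatively, once the closure formula is in hand, \cref{prop::stratification:ClosureFormulaImpliesFrontierCondition} delivers both clauses at once, which is why the paper routes through it. Second, your closing remark that the approximation property is automatic in finite dimensions is false: the paper's example immediately following the theorem, the \( \UGroup(1) \times \UGroup(1) \)-action on \( \CP^2 \), is a finite-dimensional proper action for which \( M_{\geq (H)} \subseteq \closureSet{M_{(H)}} \) fails (no intermediate orbit type stratum contains all three fixed points in its closure); in finite dimensions one only recovers a stratification after refining the orbit type decomposition into connected components, and whether such a refinement works in infinite dimensions is left open by the paper.
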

We call the stratification of \( M \) into the orbit type subsets \( M_{(H)} \) the \emphDef{stratification by orbit types}.
\begin{proof}
	By assumption, \( M \) is a Hausdorff space and, by \cref{prop::lieGroup:orbitTypeLocallyClosedLocalSubmanifold}, the orbit type subsets \( M_{(H)} \) are locally closed submanifolds.
	We will use \cref{prop::stratification:ClosureFormulaImpliesFrontierCondition} to show that the partition into orbit type submanifolds is a stratification.
	Since the action is proper, the set of orbit types is partially ordered by \cref{prop::properAction:orbitTypeIsPartialOrder}.
	Thus it remains to show that \( \closureSet{M_{(H)}} = M_{\geq (H)} \) holds for every stabilizer subgroup \( H \subseteq G \).
	By assumption, \( M_{\geq (H)} \subseteq \closureSet{M_{(H)}} \).
	The existence of slices implies the converse inclusion.
	Indeed, for every \( m \in \closureSet{M_{(H)}} \), the open slice neighborhood \( V \) at \( m \) has a non-empty intersection with \( M_{(H)} \).
	However, \cref{prop::slice:sliceNeighbourhoodHasSubconjugatedStabilizer} entails that \( V \subseteq M_{\leq (G_m)} \).
	Hence, \( (G_m) \geq (H) \) and, therefore, \( m \in M_{\geq (H)} \).
	In summary, \cref{prop::stratification:ClosureFormulaImpliesFrontierCondition} implies that the partition into orbit type submanifolds is a stratification.
	Moreover, the natural partial order of strata is opposite to the partial order of conjugacy classes.
	That is, \( M_{(K)} \subseteq \closureSet{M_{(H)}} \) (or equivalently, \( M_{(K)} \leq M_{(H)} \) in the notation of \cref{def:stratification}) if and only if \( (K) \geq (H) \). 
\end{proof}
The approximation property \( M_{\geq (H)} \subseteq \closureSet{M_{(H)}} \) does not always hold, not even in finite dimensions, as the example below shows.
At least in finite dimensions, it is always possible to find a refinement into connected components such that a stratification is obtained.
Indeed, a slice analysis shows that the set \( M_{(K)} \intersect \closureSet{M_{(H)}} \) is open and closed in \( M_{(K)} \) for every \( (K) \geq (H) \), see \parencite[Proposition~4.3.2.3]{Pflaum2001a}.
We do not know, whether a similar refinement into connected components is possible in infinite dimensions.
\begin{example}
	Consider the action of \( \UGroup(1) \times \UGroup(1) \) on \( \CP^2 \) given in homogeneous coordinates by
	\begin{equation}
		(\E^{\I \phi_1}, \E^{\I \phi_2}) \cdot (z_1 : z_2 : z_3) \defeq (\E^{\I \phi_1} z_1 : \E^{\I \phi_2} z_2 : z_3).
	\end{equation}
	The non-trivial stabilizers and their corresponding orbit type strata are listed in \cref{tab:u1u1actioncp2:strata}.
	\begin{table}	
		\centering
		\begin{tabular}{lcc}
			\toprule
			Stabilizer \( H \)		
				& \( M_H = M_{(H)} \)	
				& \( \closureSet{M_{(H)}} \setminus M_{(H)} \)
				\\  \midrule
			\( \diag \UGroup(1) \)
				& \( \set{(z_1 \neq 0 : z_2 \neq 0 : 0)} \)
				& \( \set{(1 : 0 : 0), (0 : 1 : 0)} \)
				\\
			\( \set{e} \times \UGroup(1) \)
				& \( \set{(z_1 \neq 0 : 0 : z_3 \neq 0)} \)
				& \( \set{(1 : 0 : 0), (0 : 0 : 1)} \)
				\\
			\( \UGroup(1) \times \set{e} \)
				& \( \set{(0 : z_2 \neq 0 : z_3 \neq 0)} \)
				& \( \set{(0 : 1 : 0), (0 : 0 : 1)} \)
				\\
			\( \UGroup(1) \times \UGroup(1) \)
				& \( \set{(1 : 0 : 0), (0 : 1 : 0), (0 : 0 : 1)}  \)
				&
				\\  \bottomrule	
		\end{tabular}
		\caption{Orbit type strata of the \( \UGroup(1) \times \UGroup(1) \)-action on \( \CP^2 \).}
		\label{tab:u1u1actioncp2:strata}
	\end{table}
	None of the intermediate orbit type strata contains all points with stabilizer \( \UGroup(1) \times \UGroup(1) \) in its closure and so the frontier condition is violated.
	Note, however, that \( M_{(H)} \intersect \closureSet{M_{(K)}} \neq \emptyset \) is still equivalent to \( (H) \geq (K) \).
	After decomposing the fixed point set into its connected components one obtains a partition for which the frontier condition is satisfied.
	The refined partial order can be visualized in the following Hasse diagram.
	An arrow from one stratum to another signifies that the target lies in the closure of the source.
	\begin{center}
		\begin{tikzpicture}[
			level/.style={sibling distance=10.5em/#1},
			edge from parent/.style={draw, ->}]
			\node (z){regular} [grow=up]
				child {node (diag) {$\set{(z_1 \neq 0 \tikzcolon z_2 \neq 0 \tikzcolon 0)}$}
					child {node (100) {$\set{(1 \tikzcolon 0 \tikzcolon 0)}$}}
				}
				child {node (eU) {$\set{(z_1 \neq 0 \tikzcolon 0 \tikzcolon z_3 \neq 0)}$}
					child {node (010) {$\set{(0 \tikzcolon 1 \tikzcolon 0)}$}
						edge from parent[draw=none]
					}
				}
				child {node (Ue) {$\set{(0 \tikzcolon z_2 \neq 0 \tikzcolon z_3 \neq 0)}$}
					child {node (001) {$\set{(0 \tikzcolon 0 \tikzcolon 1)}$}}
				}
				;
			\draw[->] (diag) -- (010);
			\draw[->] (eU) -- (100);
			\draw[->] (eU) -- (001);
			\draw[->] (Ue) -- (010);
		\end{tikzpicture}
	\end{center}
	See \parencites[Example~B]{Ferrarotti1994}[Example~17]{CrainicMestre2017} for other examples where the orbit type decomposition does not satisfy the frontier condition. 
\end{example}

Besides the orbit type manifold \( M_{(H)} \) also the set \( M_H \) of points with stabilizer \( H \) is a submanifold of \( M \), see \eg \parencite[Corollary~4.2.8]{Pflaum2001a} for the analogous statement in finite dimensions.
In infinite dimensions, we need to assume that the normalizer is a Lie subgroup, which is no longer an automatic consequence of closedness.
\begin{prop}
	\label{prop::lieGroup:isotropyTypeSubmanifold}
	Let \( M \) be a \( G \)-manifold with proper \( G \)-action admitting a slice at every point.
	Let \( H \subseteq G \) be a stabilizer subgroup.
	If the normalizer \( \normalizer_G (H) \) of \( H \) in \( G \) is a Lie subgroup of \( G \), then the set \( M_{H} \) of isotropy type \( H \) is a submanifold of \( M_{(H)} \).
	Moreover, at \( m \in M_H \), we have
	\begin{equation}
		\TBundle_m M_H \subseteq (\TBundle_m M)_H,
	\end{equation}
	where \( H \) acts on \( \TBundle_m M \) via the isotropy representation\footnote{The element \( h \in H \) acts on a tangent vector \( X \in \TBundle_m M \) represented by a curve \( \gamma: [0,1] \to M \) via \( h \ldot X \defeq \difFracAt{}{\varepsilon}{0} h \cdot \gamma(\varepsilon) \in \TBundle_m M \).}.
\end{prop}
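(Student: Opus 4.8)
The plan is to work locally around a point $m \in M_H$ by means of a slice, to reduce the statement to a claim about the homogeneous space $N \slash H$ with $N \defeq \normalizer_G(H)$, and finally to read off the tangent space. Fix $m \in M_H$, so that $G_m = H$, and let $S$ be a slice at $m$ with slice neighborhood $V$ and local section $\chi \colon G \slash H \supseteq U \to G$ normalized so that $\chi(\equivClass{e}) = e$; recall that $\chi^S \colon U \times S \to V$ is a diffeomorphism by \iref{i::slice:SliceDefLocallyProduct}. Since $S \subseteq M_{\leq H}$ by \cref{prop::slice:mHasMaximalStabilizerOfWholeSlice}, every $s \in S$ has $G_s \subseteq H$, and the stabilizer of $\chi^S(\equivClass{g}, s) = \chi(\equivClass{g}) \cdot s$ equals $\chi(\equivClass{g}) G_s \chi(\equivClass{g})^{-1}$. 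Using \cref{prop::compactLieSubgroup:conjugatedSubgroupEqual} for the compact groups $G_s \subseteq H$, this group equals $H$ exactly when $G_s = H$ and $\chi(\equivClass{g}) \in N$. Writing $A \defeq \set{\equivClass{g} \in U \given \chi(\equivClass{g}) \in N}$, I obtain $V \intersect M_H = \chi^S(A \times S_H)$, while the proof of \cref{prop::lieGroup:orbitTypeLocallyClosedLocalSubmanifold} gives $V \intersect M_{(H)} = \chi^S(U \times S_{(H)})$ and $S_H = S_{(H)}$ by \cref{prop:properAction:sliceOrbitTypeEqualsSliceStab}. Thus $\chi^S$ carries $M_{(H)}$ onto $U \times S_{(H)}$ and $M_H$ onto $A \times S_{(H)}$, so it suffices to prove that $A$ is a submanifold of $U$.

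\textbf{The set $A$.} The key observation is that $A = \pi(N) \intersect U$, where $\pi \colon G \to G \slash H$ is the projection: if $\chi(\equivClass{g}) \in N$ then $\equivClass{g} = \pi(\chi(\equivClass{g})) \in \pi(N)$, and conversely any $\equivClass{g} = \pi(n) \in \pi(N) \intersect U$ with $n \in N$ satisfies $\chi(\equivClass{g}) = n h$ for some $h \in H \subseteq N$, whence $\chi(\equivClass{g}) \in N$. Because $H \subseteq N$, the subgroup $N$ is saturated, $N = \pi^{-1}(\pi(N))$, so $\pi(N) = N \slash H$. As $H$ is compact, hence a finite-dimensional Lie subgroup, \parencite[Theorem~G]{Gloeckner2015} applied to $N$ endows $N \slash H$ with a manifold structure for which $N \to N \slash H$ is a submersion admitting local sections. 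To realize this as a submanifold of $G \slash H$, I would exploit the principal bundle structure of $\pi$: a local trivialization $\Phi \colon W \times H \to \pi^{-1}(W)$, $(\equivClass{g}, h) \mapsto \sigma(\equivClass{g}) h$, over a neighborhood $W$ of $\equivClass{e}$ identifies the open subset $N \intersect \pi^{-1}(W)$ of the submanifold $N \subseteq G$ with $A \times H$ (again since $H \subseteq N$), so that $A$ is a submanifold of $W$ once the finite-dimensional factor $H$ is divided out. Averaging a projection onto the closed subspace $\LieA{n}$ over the compact group $H$, as in \cref{prop:sliceTheorem:linearActionCompactGroup:invariantMetric}, produces an $\AdAction_H$-invariant complement, so $\LieA{n} \slash \LieA{h}$ is complemented in $\LieA{g} \slash \LieA{h}$ and the submanifold is split.

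\textbf{Tangent space.} At the point $m$, corresponding to $(\equivClass{e}, m) \in A \times S_{(H)}$, the identification gives $\TBundle_m M_H = \LieA{n} \ldot m \oplus \TBundle_m S_H$, where $\LieA{n} \ldot m$ is the image of $\TBundle_{\equivClass{e}} A = \LieA{n} \slash \LieA{h}$ under the orbit map and $\TBundle_m S_H \subseteq \TBundle_m S$. For the second summand, note that $S_H$ is the fixed point set of $H$ acting on $S$ (a point $s$ with $G_s \subseteq H$ is $H$-fixed iff $G_s = H$), so differentiating $h \cdot \gamma(t) = \gamma(t)$ for curves $\gamma$ in $S_H$ yields $\TBundle_m S_H \subseteq (\TBundle_m S)_H \subseteq (\TBundle_m M)_H$. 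For the first summand, let $\xi \in \LieA{n}$ and $h \in H$. Since $H$ is normal in $N$, conjugation by $h$ acts trivially on $N \slash H$, hence $\AdAction_h$ acts trivially on $\LieA{n} \slash \LieA{h}$; thus $\AdAction_h \xi - \xi \in \LieA{h} = \ker(\,\cdot \ldot m)$, and the equivariance $h \ldot (\xi \ldot m) = (\AdAction_h \xi) \ldot m = \xi \ldot m$ shows $\xi \ldot m \in (\TBundle_m M)_H$. Altogether $\TBundle_m M_H \subseteq (\TBundle_m M)_H$.

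\textbf{Main obstacle.} I expect the delicate point to be the middle step: exhibiting the submanifold chart for $A = N \slash H$ inside $G \slash H$ without invoking an inverse function theorem, which is unavailable at this generality. The device that makes this work in the locally convex setting is the saturation $N = \pi^{-1}(\pi(N))$, which lets the local triviality of the principal $H$-bundle $\pi$ transfer the submanifold property of $N \subseteq G$ directly to $A \subseteq G \slash H$; the finite dimensionality and compactness of $H$ are precisely what guarantee both the manifold structure on $N \slash H$ and the existence of an $\AdAction_H$-invariant complement to $\LieA{n}$, the two ingredients needed to make $A$ a split submanifold.
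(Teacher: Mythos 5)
Your proof is correct and follows the paper's argument essentially verbatim: localize in a slice, identify \( M_H \) with \( \normalizer_{G \slash H}(U) \times S_H \) via \cref{prop::slice:mHasMaximalStabilizerOfWholeSlice}, \cref{prop::compactLieSubgroup:conjugatedSubgroupEqual} and \( S_{(H)} = S_H \), and transfer the submanifold property of \( \normalizer_G(H) \) to \( U \) through the trivialization \( U \times H \to G \), \( (\equivClass{g}, h) \mapsto \chi(\equivClass{g})h \), exactly as the paper does. The only divergences are cosmetic: the paper gets the tangent inclusion by the one-line observation that a curve in \( M_H \) is pointwise \( H \)-fixed, so your \( \AdAction \)-computation on \( \LieA{n} \ldot m \) is correct but unnecessary, and your concluding splitness remark tacitly assumes \( \LieA{n} \) is topologically complemented in \( \LieA{g} \) — which the hypotheses do not provide — but since the statement only claims "submanifold," that remark can simply be dropped.
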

\begin{proof}
	Let \( m \in M_H \) and let a slice \( S \) at \( m \) be chosen.
	Recall from the proof of \cref{prop::lieGroup:orbitTypeLocallyClosedLocalSubmanifold} that the slice diffeomorphism \( \chi^S \) locally identifies \( M_{(H)} \) with \( U \times S_{(H)} \), where \( U \) is an open neighborhood of the identity coset \( \equivClass{e} \) in \( G \slash H \).
	We claim that under \( (\chi^S)^{-1} \) the isotropy subset \( M_H \) takes the form \( \normalizer_{G \slash H} (U) \times S_H \), where
	\begin{equation}
		\normalizer_{G \slash H} (U) \defeq \set*{\equivClass{g} \in U \subseteq G \slash H \given \chi(\equivClass{g}) \in \normalizer_G (H)}.
	\end{equation}
	Indeed, a point \( v = \chi(\equivClass{g}) \cdot s \) in the slice neighborhood \( V \subseteq M \) has stabilizer \( H \) only if \( G_s \) is conjugate to \( H \).
	However, \( G_s \subseteq H \) by \cref{prop::slice:mHasMaximalStabilizerOfWholeSlice}.
	Thus, \cref{prop::compactLieSubgroup:conjugatedSubgroupEqual} implies \( G_s = H \).
	Moreover, we have \( H = G_v = \chi(\equivClass{g}) H \chi(\equivClass{g})^{-1} \) and so \( \chi(\equivClass{g}) \in \normalizer_G (H) \).
	Conversely, if \( G_s = H \) and \( \chi(\equivClass{g}) \in \normalizer_G (H) \), then the point \( v = \chi(\equivClass{g}) \cdot s \) has stabilizer \( H \).
	Since, moreover, \( S_{(H)} = S_H \) by \cref{prop:properAction:sliceOrbitTypeEqualsSliceStab}, we find
	\begin{equation}
		\normalizer_{G \slash H} (U) \times S_H \isomorph M_H \subseteq M_{(H)} \isomorph U \times S_{H}.
	\end{equation}
	To conclude that \( M_H \) is a submanifold of \( M_{(H)} \) it thus suffices to show that \( \normalizer_{G \slash H} (U) \) is a submanifold of \( U \).
	For that purpose, recall that the section \( \chi \) induces a local trivialization of \( G \to G \slash H \) via the map
	\begin{equation}
		U \times H \to G, \quad (\equivClass{g}, h) \mapsto \chi(\equivClass{g}) h,
	\end{equation}
	which is a diffeomorphism onto its image.
	Under this diffeomorphism, the Lie subgroup \( \normalizer_G (H) \) is mapped to \( \normalizer_{G \slash H} (U) \times H \) as a straightforward calculation reveals.
	In particular, \( \normalizer_{G \slash H} (U) \) is a submanifold of \( U \).

	In order to show \( \TBundle_m M_H \subseteq (\TBundle_m M)_H \), let \( X \in \TBundle_m M_H \) be a tangent vector represented by a curve \( \gamma: [0,1] \to M_H \).
	For every \( h \in H \), we have
	\begin{equation}
		h \ldot X = \difFracAt{}{\varepsilon}{0} h \cdot \gamma(\varepsilon) = \difFracAt{}{\varepsilon}{0} \gamma(\varepsilon) = X,
	\end{equation}
	because \( \gamma(\varepsilon) \in M_H \).
	Thus, \( \TBundle_m M_H \subseteq (\TBundle_m M)_H \), indeed.
\end{proof}

\subsection{Orbit type stratification of \texorpdfstring{$M / G$}{M / G}}
We also get an analogous stratification of the orbit space \( M \slash G \).
\begin{prop} \label{prop::lieGroup:orbitTypeQuotientManifold}
	Let \( M \) be a \( G \)-manifold with proper \( G \)-action admitting a slice at every point.
	For every stabilizer subgroup \( H \subseteq G \), the quotient \( \check{M}_{(H)} = M_{(H)} \slash G \) carries a unique manifold structure such that the canonical projection \( \pi_{(H)}: M_{(H)} \to \check{M}_{(H)} \) is a smooth submersion.
\end{prop}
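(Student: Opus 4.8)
The plan is to build an atlas on $\check{M}_{(H)}$ out of the partial slices and to read off both the submersion property and uniqueness from the resulting local product description of $\pi_{(H)}$.

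First I would fix a point of $\check{M}_{(H)}$ and choose a representative $m \in M_{(H)}$ of the corresponding orbit whose stabilizer is exactly $H$; this is possible because any conjugate stabilizer can be normalized by translating $m$ along its orbit without changing its class in $\check{M}_{(H)}$. Let $S$ be a (linear) slice at $m$ and, by \cref{prop:slice:existenceTube}, let $\chi^\tube : G \times_{G_m} S \to G\cdot S$ be the associated tube diffeomorphism. Since the action is proper, \cref{prop:properAction:sliceOrbitTypeEqualsSliceStab} gives $S_{(H)} = S_{G_m} = S_H$, and every $s \in S_H$ satisfies $G_s = H$, so $H$ fixes $s$. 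Consequently $H$ acts trivially on $S_H$ and the twisted product restricts to a $G$-equivariant diffeomorphism $G \times_H S_H \cong (G/H)\times S_H$. Restricting $\chi^\tube$ to orbit type $(H)$ therefore identifies the $G$-saturated set $(G\cdot S)\cap M_{(H)} = G\cdot S_H$ with $(G/H)\times S_H$, the action being on the left factor. Passing to the $G$-quotient, this identifies $\Omega := \pi_{(H)}((G\cdot S)\cap M_{(H)})$, an open subset of $\check{M}_{(H)}$, with $S_H$; concretely $\psi := \pi_{(H)}|_{S_H}$ is a homeomorphism onto $\Omega$ (injectivity uses \iref{i::slice:SliceDefOnlyStabNotMoveSlice} together with $H=G_s$). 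As $S$ is a linear slice, $S_H$ is an open subset of the fixed-point subspace $X_H$ of the slice representation, so $\psi^{-1}$ is a chart modeled on the closed locally convex space $X_H$.

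Next I would verify that two such charts, coming from slices $S_1,S_2$ at representatives $m_1,m_2$, are smoothly compatible. Let $\iota_i : S_{i,H}\hookrightarrow M_{(H)}$ be the inclusions, which are smooth embeddings since $M_{(H)}$ and $S_{i,H}$ are submanifolds (\cref{prop::lieGroup:orbitTypeLocallyClosedLocalSubmanifold}), and set $\sigma_i := \iota_i\circ\psi_i : \Omega_i \to M_{(H)}$; because $\sigma_i\circ\psi_i^{-1}=\iota_i$, each $\sigma_i$ is a smooth local section of $\pi_{(H)}$ for the chart structure. On the overlap the transition map factors as
\[
	\psi_2\circ\psi_1^{-1} = (\psi_2\circ\pi_{(H)})\circ\sigma_1 ,
\]
and $\psi_2\circ\pi_{(H)}$ is smooth on the saturated set $\pi_{(H)}^{-1}(\Omega_2)=G\cdot S_{2,H}$, because in the tube coordinates at $m_2$ it is the projection $(G/H)\times S_{2,H}\to S_{2,H}$. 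Hence $\psi_2\circ\psi_1^{-1}$ is a composite of smooth maps; by symmetry its inverse is smooth as well, so the transition is a diffeomorphism between open subsets of $X_{1,H}$ and $X_{2,H}$. This endows $\check{M}_{(H)}$ with a smooth manifold structure, and it is Hausdorff because it is a subspace of the Hausdorff quotient $M/G$ (\cref{prop:properAction:orbitSpaceHausdorff}).

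Finally, in every tube chart $\pi_{(H)}$ is the projection $(G/H)\times S_H\to S_H$, which is a submersion admitting the smooth local sections $\sigma$ constructed above; hence $\pi_{(H)}$ is a smooth submersion. Uniqueness then follows from the presence of these sections: a surjective submersion with smooth local sections satisfies ``$f$ is smooth iff $f\circ\pi_{(H)}$ is smooth'', so if $\check{M}_{(H)}'$ were a second smooth structure making $\pi_{(H)}$ a submersion, applying this characterization to the identity map (in both directions) shows the two structures coincide. The main obstacle is the middle step, the smoothness of the transition maps between charts built from different slices; I would route it through the globally defined, $G$-saturated map $\psi_2\circ\pi_{(H)}$ and the smooth sections $\sigma_i$ rather than attempting to compare the two slices directly.
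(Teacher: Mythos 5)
Your construction is correct and rests on the same core idea as the paper's proof: the charts of \( \check{M}_{(H)} \) are the partial slices, and \( \pi_{(H)} \) is locally the projection of a product, which simultaneously yields the submersion property and the smoothness of transition maps. The packaging differs in ways worth recording. The paper works directly inside the slice neighborhood \( V \isomorph U \times S \) of \iref{i::slice:SliceDefLocallyProduct}, takes charts \( \varphi_m : S_{(H)} \to \check{V}_{(H)} \), and checks chart compatibility by the explicit translation formula \( \varphi_{m'}^{-1}\circ\varphi_m(s) = \pr_{S'_{(H)}}\circ(\chi'^{S'}_{(H)})^{-1}(a\cdot s) \) for representatives on the same orbit; you instead pass through the tube diffeomorphism of \cref{prop:slice:existenceTube}, normalize \( G_m = H \), and exploit that \( H \) acts trivially on \( S_H \) to get the saturated product \( G\cdot S_H \isomorph (G\slash H)\times S_H \). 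Your saturated chart domains treat overlaps of charts based at \emph{different} orbits uniformly, and your uniqueness argument via smooth local sections makes explicit a part of the statement that the paper's proof leaves implicit --- both are genuine, if small, improvements. One hypothesis overstep should be repaired: you assume the slice is \emph{linear} and use linearity to model the chart on the fixed-point subspace \( X_H \), but the proposition only grants a slice; the correct substitute is \iref{i::slice:SliceDefPartialSliceSubmanifold}, which makes \( S_{(H)} \) (equal to \( S_{G_m} \) by \cref{prop:properAction:sliceOrbitTypeEqualsSliceStab}, with no linearity needed) a closed submanifold of \( S \) and hence a manifold fit to serve as chart codomain --- exactly the paper's route. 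Finally, fix the notational slips: \( \sigma_i \) must be \( \iota_i\circ\psi_i^{-1} \) (so that \( \sigma_i\circ\psi_i = \iota_i \)), the transition map is \( \psi_2^{-1}\circ\psi_1 \), and the map that becomes the projection \( (G\slash H)\times S_{2,H} \to S_{2,H} \) in tube coordinates is \( \psi_2^{-1}\circ\pi_{(H)} \), not \( \psi_2\circ\pi_{(H)} \). None of this affects the architecture of the argument.
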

\begin{proof}
	Since \( M_{(H)} \) is \( G \)-invariant, the \( G \)-action on \( M \) restricts to a proper \( G \)-action on \( M_{(H)} \).
	Thus, by \cref{prop:properAction:orbitSpaceHausdorff}, the quotient \( \check{M}_{(H)} = M_{(H)} \slash G \) endowed with the quotient topology is a Hausdorff space.
	Let \( \pi_{(H)}: M_{(H)} \to \check{M}_{(H)} \) denote the canonical projection. 

	First, let us construct local charts on \( \check{M}_{(H)} \).
	Let \( m \in M_{(H)} \) and let \( S \) denote a slice at \( m \).
	Let \( \chi: G/G_m \supseteq U \to G \) be a local section such that the associated map \( \chi^S: U \times S \to M \) is an diffeomorphism onto an open slice neighborhood \( V \) of \( m \), as in \iref{i::slice:SliceDefLocallyProduct}.
	Then, \( V_{(H)} = V \intersect M_{(H)} \) is open in \( M_{(H)} \) and, since \( \pi_{(H)} \) is an open map, \( \pi_{(H)}(V_{(H)}) = V_{(H)} \slash G \) is an open neighborhood of \( G \cdot m \) in \( \check{M}_{(H)} \).
	Recall from the proof of \cref{prop::lieGroup:orbitTypeLocallyClosedLocalSubmanifold} that \( \chi^S \) restricts to a diffeomorphism \( \chi^S_{(H)} \) from \( U \times S_{(H)} \) onto \( V_{(H)} \).
	Consider the composition
	\begin{equation}
		\pi_{(H)} \circ \chi^S_{(H)}: U \times S_{(H)} \to \check{M}_{(H)},
	\end{equation}
	which is obviously a surjection onto \( \check{V}_{(H)} \equiv V_{(H)} \slash G \).
	We claim that this map is also injective when restricted to \( \equivClass{e} \times S_{(H)} \).
	Indeed, if two points \( s \) and \( \tilde{s} \) of \( S_{(H)} \) are mapped to the same point in \( \check{M}_{(H)} \), then there exists \( a \in G \) such that \( a \cdot s = \tilde{s} \).
	Hence, \( a \) is an element of the stabilizer \( G_m \) due to \iref{i::slice:SliceDefOnlyStabNotMoveSlice}. 
	Since, by assumption, \( s \in S_{(H)} \) and since \( S_{(H)} = S_{G_m} \) according to \cref{prop:properAction:sliceOrbitTypeEqualsSliceStab}, we get \( s = \tilde{s} \).
	In summary, \( \pi_{(H)} \circ \chi^S_{(H)} \) restricts to a continuous bijection \( \varphi_m: S_{(H)} \to \check{V}_{(H)} \).
	Moreover, \( \varphi_m \) is open and hence a homeomorphism.
	
	We wish to put a smooth structure on \( \check{M}_{(H)} \) by requiring that \( \varphi_m \) be a diffeomorphism for all \( m \in M_{(H)} \).
	In order to make sense of such a procedure, we need to verify that the notion of smoothness does not depend on the chosen point \( m \) in the orbit \( G \cdot m \).
	Let \( m' = a \cdot m \), with \( a \in G \), be another point on the orbit \( G \cdot m \).
	Choose a slice \( S' \) at \( m' \) with slice neighborhood \( V' \subseteq M \) and with associated diffeomorphism \( \chi'^{S'}_{(H)}: U' \times S'_{(H)} \to V'_{(H)} \).
	By shrinking \( S \), we can assume that \( a \cdot S \subseteq V' \).
	Then, the transition map between the two charts \( \varphi_m \) and \( \varphi_{m'} \) is given by
	\begin{equation}
		\varphi_{m'}^{-1} \circ \varphi_m (s) = \pr_{S'_{(H)}} \circ (\chi'^{S'}_{(H)})^{-1} (a \cdot s).
	\end{equation}
	Hence, it is a smooth map.
	In summary, the charts \( \varphi_m: S_{(H)} \to \check{V}_{(H)} \) endow \( \check{M}_{(H)} \) with a smooth structure.
	With respect to these charts, the map \( \pi_{(H)} \) is represented by the projection \( U \times S_{(H)} \to S_{(H)} \) onto the second factor and so it is a smooth submersion.
\end{proof}

\begin{thm}[Orbit Type Stratification of the Orbit Space]
	\label{prop:orbitTypeStratification:ofOrbitSpace}
	Let \( M \) be a \( G \)-manifold with proper \( G \)-action admitting a slice at every point.
	Assume that the approximation property \( M_{\geq (H)} \subseteq \closureSet{M_{(H)}} \) holds for every stabilizer subgroup \( H \subseteq G \).
	Then, the partition of the orbit space \( \check{M} = M \slash G \) into the subsets \( \check{M}_{(H)} = M_{(H)} \slash G \) is a stratification.
\end{thm}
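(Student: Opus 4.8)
The plan is to transport the entire argument for the stratification of $M$ (\cref{prop::stratification:orbitTypePartionIsStratification}) across the canonical projection $\pi \colon M \to \check{M} = M \slash G$, exploiting that $\pi$ is a continuous, open and surjective quotient map (see the proof of \cref{prop:properAction:orbitSpaceHausdorff}). The manifold structure on the candidate strata is already supplied by \cref{prop::lieGroup:orbitTypeQuotientManifold}, which makes each $\check{M}_{(H)} = M_{(H)} \slash G$ a smooth manifold and $\pi_{(H)} \colon M_{(H)} \to \check{M}_{(H)}$ a submersion, while $\check{M}$ is Hausdorff by \cref{prop:properAction:orbitSpaceHausdorff}. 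Mirroring the proof of \cref{prop::stratification:orbitTypePartionIsStratification}, I would then reduce the whole statement to verifying the closure formula $\closureSet{\check{M}_{(H)}} = M_{\geq (H)} \slash G$ and invoking \cref{prop::stratification:ClosureFormulaImpliesFrontierCondition}; the requisite partial order of strata is inherited from the partial order of orbit types (\cref{prop::properAction:orbitTypeIsPartialOrder}).

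The two topological facts doing the real work are the following. First, for any subset $A \subseteq M$ one has $\closureSet{\pi(A)} = \pi(\closureSet{A})$: continuity gives $\pi(\closureSet{A}) \subseteq \closureSet{\pi(A)}$, while $\pi^{-1}(\closureSet{\pi(A)})$ is a closed set containing $A$, hence contains $\closureSet{A}$, and applying $\pi$ together with surjectivity yields the reverse inclusion. Second, the image under $\pi$ of a $G$-invariant, locally closed subset is again locally closed: if $A$ is $G$-invariant and open in its (automatically $G$-invariant) closure, write $A = O \cap \closureSet{A}$ with $O \subseteq M$ open and replace $O$ by its saturation $G \cdot O$, which is open and still satisfies $(G \cdot O) \cap \closureSet{A} = A$; since $\pi$ is open and $G \cdot O$ is saturated, the identity $\pi(A) = \pi(G \cdot O) \cap \closureSet{\pi(A)}$ exhibits $\pi(A)$ as open in its closure. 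Because $M_{(H)}$ is $G$-invariant and locally closed by \cref{prop::lieGroup:orbitTypeLocallyClosedLocalSubmanifold}, this shows that each stratum $\check{M}_{(H)}$ is locally closed in $\check{M}$.

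It then remains to assemble the pieces. The sets $\check{M}_{(H)}$ partition $\check{M}$, since the $G$-invariant sets $M_{(H)}$ partition $M$ and distinct orbit types map to disjoint images. Using the closure identity together with the closure formula $\closureSet{M_{(H)}} = M_{\geq (H)}$ established in the proof of \cref{prop::stratification:orbitTypePartionIsStratification}, I obtain
\begin{equation}
	\closureSet{\check{M}_{(H)}}
		= \pi\bigl(\closureSet{M_{(H)}}\bigr)
		= \pi\bigl(M_{\geq (H)}\bigr)
		= \bigcup_{(K) \geq (H)} \check{M}_{(K)},
\end{equation}
where the last equality uses that $M_{\geq (H)}$ is $G$-invariant with $M_{\geq (H)} = \bigcup_{(K) \geq (H)} M_{(K)}$. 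This is precisely the closure formula required for \cref{prop::stratification:ClosureFormulaImpliesFrontierCondition}, which therefore certifies the frontier condition and completes the proof, with the partial order of strata again opposite to that of the conjugacy classes.

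The main obstacle I anticipate is purely the interchange of closure with the quotient map and the persistence of local closedness under $\pi$. Everything else is a formal consequence of results already available; but these two topological lemmas genuinely rely on both the openness and the surjectivity of $\pi$ together with the saturation of the orbit type sets, and getting the saturation argument for local closedness exactly right---rather than merely asserting it---is the one place where real care is required.
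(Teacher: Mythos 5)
Your strategy is essentially the paper's own: the paper also pushes the orbit-type stratification of \( M \) through the open, surjective projection \( \pi \), with \( G \)-invariance of the orbit-type sets doing the work in the chain \( \check{M}_{(H)} \cap \closureSet{\check{M}_{(K)}} = \pi\bigl(M_{(H)} \cap \closureSet{M_{(K)}}\bigr) \). Your two deviations are harmless and arguably tidier: you re-apply \cref{prop::stratification:ClosureFormulaImpliesFrontierCondition} at the quotient level to the pushed-forward closure formula \( \closureSet{\check{M}_{(H)}} = \bigcup_{(K) \geq (H)} \check{M}_{(K)} \), whereas the paper transfers both halves of the frontier condition \iref{i::stratification:frontierCondition} directly from \( M \); and you obtain local closedness of \( \check{M}_{(H)} \) by a purely topological saturation argument, whereas the paper reads it off the slice structure (\( S \) is closed in \( G \cdot S \) by \cref{prop::slice:SliceDefSliceSweepIsOpen}, and the partial slice is closed in \( S \) by \iref{i::slice:SliceDefPartialSliceSubmanifold}). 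Your saturation identity \( (G \cdot O) \cap \closureSet{A} = A \) is correct and does use \( G \)-invariance of both \( A \) and \( \closureSet{A} \) in the right places.

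There is, however, a genuine flaw in your first topological lemma, ironically at the exact spot you flagged as needing care. The identity \( \closureSet{\pi(A)} = \pi\bigl(\closureSet{A}\bigr) \) is \emph{false} for arbitrary \( A \subseteq M \): take the proper, free action of \( \R \) on \( \R^2 \) by vertical translations, so that \( \pi \) is the projection onto the \( x \)-axis, and let \( A = \set{(x, 1/x) \given x > 0} \); then \( A \) is closed, so \( \pi(\closureSet{A}) = (0, \infty) \), while \( \closureSet{\pi(A)} = [0, \infty) \). Moreover, your proof of the reverse inclusion is circular: from \( \closureSet{A} \subseteq \pi^{-1}\bigl(\closureSet{\pi(A)}\bigr) \), applying \( \pi \) and surjectivity yields \( \pi(\closureSet{A}) \subseteq \closureSet{\pi(A)} \) — the same trivial inclusion continuity already gave you, not its converse. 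The inclusion \( \closureSet{\pi(A)} \subseteq \pi(\closureSet{A}) \) genuinely requires \emph{both} openness of \( \pi \) and \( G \)-invariance of \( A \): given \( y \in \closureSet{\pi(A)} \), pick any \( x \in \pi^{-1}(y) \); for every open neighborhood \( U \) of \( x \), the set \( \pi(U) \) is open and meets \( \pi(A) \), so some \( u \in U \) satisfies \( \pi(u) \in \pi(A) \), whence \( u \in A \) by saturation of \( A \); thus \( x \in \closureSet{A} \) and \( y \in \pi(\closureSet{A}) \). With the lemma restated for \( G \)-invariant \( A \) and proved this way, everything else in your argument goes through, since all sets you apply it to (\( M_{(H)} \), \( M_{\geq (H)} \), the saturated \( G \cdot O \)) are invariant — this is precisely the role \( G \)-invariance plays in the paper's own computation~\eqref{eq:orbitTypeStratification:relationFrontierOrbitSpace}.
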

We call the stratification of \( \check{M} \) into the orbit type subsets \( \check{M}_{(H)} \) the \emphDef{stratification by orbit types}.
\begin{proof}
	Since the \( G \)-action is proper, the quotient topology on the orbit space \( \check{M} \) is Hausdorff according to \cref{prop:properAction:orbitSpaceHausdorff}. 
	By \cref{prop::lieGroup:orbitTypeQuotientManifold}, \( \check{M}_{(H)} \) is a smooth manifold for every orbit type \( (H) \).
	Moreover, \( \check{M}_{(H)} \) is locally closed in \( \check{M} \), because every slice \( S \) is closed in \( G \cdot S \) due to \cref{prop::slice:SliceDefSliceSweepIsOpen} and because \iref{i::slice:SliceDefPartialSliceSubmanifold} entails that the partial slice \( S_{H} \) is closed in \( S \).
	Hence, the property \iref{i::stratification:stratumIsManifold} of \cref{def:stratification} is satisfied.

	The frontier property \iref{i::stratification:frontierCondition} of the partition of \( \check{M} \), see \cref{def:stratification}, is derived from that of the orbit type stratification of \( M \).
	Indeed, let \( (H) \neq (K) \) be orbit types such that \( \check{M}_{(H)} \intersect \closureSet{\check{M}_{(K)}} \) is not empty.
	We have
	\begin{equation}\label{eq:orbitTypeStratification:relationFrontierOrbitSpace}\begin{split}
		\check{M}_{(H)} \intersect \closureSet{\check{M}_{(K)}} 
			&= \pi\left(M_{(H)}\right) \intersect \closureSet{\pi\left(M_{(K)}\right)} \\
			&= \pi\left(M_{(H)}\right) \intersect \pi\bigl(\closureSet{M_{(K)}}\bigr) \\
			&= \pi\bigl(M_{(H)} \intersect \closureSet{M_{(K)}}\bigr),
	\end{split}\end{equation}
	where the non-trivial inclusions in this chain follow from \( G \)-invariance of \( M_{(H)} \) and \( M_{(K)} \).
	Hence, \( M_{(H)} \intersect \closureSet{M_{(K)}} \neq \emptyset \).
	Since the decomposition of \( M \) into orbit types satisfies the frontier condition, we have \( M_{(H)} \subseteq \closureSet{M_{(K)}} \setminus M_{(K)} \) and \( M_{(K)} \intersect \closureSet{M_{(H)}} = \emptyset \).
	Therefore, by similar arguments as above, \( \check{M}_{(H)} \subseteq \pi\bigl(\closureSet{M_{(K)}} \setminus M_{(K)}\bigr) = \closureSet{\check{M}_{(K)}} \setminus \check{M}_{(K)} \) and \( \check{M}_{(K)} \intersect \closureSet{\check{M}_{(H)}} = \emptyset \), which verifies the frontier condition for the orbit type decomposition of \( \check{M} \).
\end{proof}

\subsection{Orbit type stratification of a subset}
Often one is rather interested in a subset \( X \subseteq M \) than in the whole space \( M \).
For example, in the context of symplectic reduction, \( M \) is the phase space and \( X \) is a momentum map level set.
Under natural conditions on the behavior of the slices relative to \( X \), the subset \( X \) inherits the orbit type stratification from \( M \).
\begin{prop}
	Let \( M \) be a \( G \)-manifold with proper \( G \)-action and let \( X \) be a closed \( G \)-invariant subset of \( M \).
	Assume the \( G \)-action on \( M \) admits a slice \( S \) at every point \( m \in X \) such that
	\begin{enumerate}
		\item 
			\( X \intersect S_{(G_m)} \) is a closed submanifold of \( S_{(G_m)} \),
		\item
			for every orbit type \( (K) \leq (G_m) \), the point \( m \) lies in the closure of \( S_{(K)} \intersect X \) in \( S \).
	\end{enumerate}
	Then, the induced partition of \( X \) into the orbit type subsets \( X_{(H)} = X \intersect M_{(H)} \) is a stratification.
	Moreover, under these assumptions, the decomposition of \( \check{X} = X \slash G \) into \( \check{X}_{(H)} = X_{(H)} \slash G \) is a stratification, too.
\end{prop}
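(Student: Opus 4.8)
The plan is to verify that the partition $\{X_{(H)}\}$ satisfies the hypotheses of \cref{prop::stratification:ClosureFormulaImpliesFrontierCondition}, exactly as in the proof of \cref{prop::stratification:orbitTypePartionIsStratification}, but with the two assumptions of the proposition taking over the roles played there by \iref{i::slice:SliceDefPartialSliceSubmanifold} and the approximation property. First I would observe that, since $X$ is a closed $G$-invariant subset, the restriction of the action to $X$ is again proper; hence $X$ is a Hausdorff space on which the set of occurring orbit types is partially ordered by \cref{prop::properAction:orbitTypeIsPartialOrder} (the relevant stabilizers being compact principal Lie subgroups because slices exist at every point of $X$). It then remains to check that each $X_{(H)}$ is a locally closed submanifold carrying a smooth manifold structure and that the closure formula $\closureSet{X_{(H)}} = X_{\geq (H)}$ holds, with closure taken in $X$.

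For the manifold structure I would work inside a slice neighborhood. Fix $m \in X_{(H)}$ and a slice $S$ at $m$, with slice diffeomorphism $\chi^S \colon U \times S \to V$ as in \iref{i::slice:SliceDefLocallyProduct}. As in the proof of \cref{prop::lieGroup:orbitTypeLocallyClosedLocalSubmanifold}, $\chi^S$ restricts to a diffeomorphism $U \times S_{(G_m)} \to V_{(H)}$. Since $X$ is $G$-invariant, a point $\chi(\equivClass{g}) \cdot s$ lies in $X$ if and only if $s$ does, so $\chi^S$ identifies $X_{(H)} \intersect V$ with $U \times (X \intersect S_{(G_m)})$. By the first assumption, $X \intersect S_{(G_m)}$ is a closed submanifold of $S_{(G_m)}$; hence $X_{(H)}$ is a submanifold of the manifold $M_{(H)}$ and in particular a smooth manifold. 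Local closedness of $X_{(H)}$ in $X$ is automatic, since $X_{(H)} = X \intersect M_{(H)}$ is the intersection of $X$ with the locally closed subset $M_{(H)} \subseteq M$.

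The core of the argument is the closure formula. For the inclusion $\closureSet{X_{(H)}} \subseteq X_{\geq (H)}$ I would localize the slice argument of \cref{prop::stratification:orbitTypePartionIsStratification}: given $p \in \closureSet{X_{(H)}} \subseteq X$, a slice $S$ at $p$ (which exists precisely because $p \in X$) furnishes a neighborhood $V$ with $V \subseteq M_{\leq (G_p)}$ by \cref{prop::slice:sliceNeighbourhoodHasSubconjugatedStabilizer}; as $V$ meets $M_{(H)}$, we obtain $(H) \leq (G_p)$ and therefore $p \in X_{\geq (H)}$. The reverse inclusion is exactly where the second assumption is needed: for $p \in X_{\geq (H)}$ we have $(H) \leq (G_p)$, so taking a slice $S$ at $p$ and applying the second assumption with $(K) = (H)$ shows that $p$ lies in the closure of $S_{(H)} \intersect X$ inside $S$. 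Because $S$ is a submanifold of $M$, closures computed in $S$ are the restrictions of closures in $M$, and $S_{(H)} \intersect X \subseteq X_{(H)}$, whence $p \in \closureSet{X_{(H)}}$. With the closure formula established, \cref{prop::stratification:ClosureFormulaImpliesFrontierCondition} delivers the frontier condition and hence the first claim. I expect the bookkeeping of closures in $S$, in $X$ and in $M$, together with confirming that the second assumption supplies exactly the missing approximation property, to be the main obstacle.

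For the orbit space $\check{X} = X \slash G$ I would argue as in \cref{prop::lieGroup:orbitTypeQuotientManifold,prop:orbitTypeStratification:ofOrbitSpace}. Properness of the restricted action makes $\check{X}$ Hausdorff via \cref{prop:properAction:orbitSpaceHausdorff}, and the slice charts $\varphi_m$ constructed there descend to $X$: in the local model the quotient is identified with $X \intersect S_{(G_m)}$, a manifold by the first assumption, so $\check{X}_{(H)} = X_{(H)} \slash G$ carries a smooth manifold structure and is locally closed in $\check{X}$. Finally, the frontier condition for the partition $\{\check{X}_{(H)}\}$ follows from that of $\{X_{(H)}\}$ by the same $G$-invariance computation as in \eqref{eq:orbitTypeStratification:relationFrontierOrbitSpace}, which relates $\closureSet{\check{X}_{(K)}}$ to $\pi\bigl(\closureSet{X_{(K)}}\bigr)$.
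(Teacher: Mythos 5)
Your proposal is correct and follows essentially the same route as the paper's proof: the local identification of \( X_{(H)} \intersect V \) with \( U \times (X \intersect S_{(G_m)}) \) via the slice diffeomorphism, the closure formula \( \closureSet{X_{(H)}} = X_{\geq (H)} \) obtained from \cref{prop::slice:sliceNeighbourhoodHasSubconjugatedStabilizer} together with the second assumption, an appeal to \cref{prop::stratification:ClosureFormulaImpliesFrontierCondition}, and the quotient statement handled exactly as in \cref{prop::lieGroup:orbitTypeQuotientManifold,prop:orbitTypeStratification:ofOrbitSpace}. The only cosmetic deviation is that you justify local closedness of \( X_{(H)} \) in \( X \) by citing local closedness of \( M_{(H)} \) in \( M \) (which, under the present hypotheses of slices only at points of \( X \), is guaranteed only near \( X \) --- but that is all you need, and your own slice identification together with \iref{i::slice:SliceDefPartialSliceSubmanifold} already supplies it), whereas the paper derives it from closedness of \( X \intersect S_{(G_m)} \) in \( X \intersect S \); no gap results.
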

\begin{proof}
	We will again employ \cref{prop::stratification:ClosureFormulaImpliesFrontierCondition} to show that the partition of \( X \) into orbit type subsets is a stratification.
	Let \( (H) \) be an orbit type and let \( m \in X_{(H)} \).
	Choose a slice \( S \) at \( m \) having the properties stated above.
	Let \( V \subseteq M \) denote the slice neighborhood of \( m \) as in \iref{i::slice:SliceDefLocallyProduct}.
	As we have seen in the proof of \cref{prop::lieGroup:orbitTypeLocallyClosedLocalSubmanifold}, the slice diffeomorphism \( \chi^S: U \times S \to V \) locally identifies the orbit type manifold \( M_{(H)} \) with \( U \times S_{(G_m)} \).
	Since \( X \) is \( G \)-invariant, the subset \( X_{(H)} = X \intersect M_{(H)} \) is identified with \( U \times (X \intersect S_{(G_m)}) \).
	By assumption, \( X \intersect S_{(G_m)} \) is a closed submanifold of \( S_{(G_m)} \) and thus \( U \times (X \intersect S_{(G_m)}) \) is a closed submanifold of \( U \times S_{(G_m)} \).
	Accordingly, \( X_{(H)} \) is a locally closed submanifold of \( M_{(H)} \).
	Given that \( X_{(H)} \intersect S = X \intersect S_{(G_m)} \) is closed in \( X \intersect S \), the piece \( X_{(H)} \) is locally closed in \( X \).

	The set of orbit types occurring in \( X \) is, of course, a subset of the set of all orbit types of \( M \) and hence it inherits the partial order.
	For the stratification of \( X \), it remains to show that \( \closureSet{X}_{(K)} = \bigUnion_{(H) \geq (K)} X_{(H)} \) holds for every orbit type \( (K) \) of \( X \).
	One inclusion is immediate from
	\begin{equation}
		\closureSet{X}_{(K)} = \closureSet{X \intersect M_{(K)}} 
			\subseteq X \intersect \closureSet{M_{(K)}}
			\subseteq \bigUnion_{(H) \geq (K)} X \intersect M_{(H)},
	\end{equation}
	where we have used the relation \( \closureSet{M_{(K)}} \subseteq M_{\geq (K)} \) which follows from the existence of slices for the \( G \)-action on \( M \), see the proof of \cref{prop::stratification:orbitTypePartionIsStratification}.
	For the converse inclusion, choose a point \( m \in X_{(H)} \) with \( (H) \geq (K) \).
	In slice coordinates at \( m \), we have \( V \intersect X_{(K)} \isomorph U \times (X \intersect S_{(K)}) \).
	By assumption, the point \( m \) lies in the closure of \( X \intersect S_{(K)} \) and so it also lies in \( \closureSet{X}_{(K)} \).
	Thus, \( X_{\geq (K)} \subseteq \closureSet{X}_{(K)} \).
	Now, \cref{prop::stratification:ClosureFormulaImpliesFrontierCondition} shows that the decomposition of \( X \) into \( X_{(H)} \) is a stratification.

	The proof that the decomposition of \( \check{X} \) into the sets \( \check{X}_{(H)} \) is a stratification follows the steps in the proofs of \cref{prop::lieGroup:orbitTypeQuotientManifold,prop:orbitTypeStratification:ofOrbitSpace} and, thus, will only be sketched here.
	Similarly to the situation above, the composition
	\begin{equation}
		\pi_{(H)} \circ \chi^S_{(H)}: U \times (X \intersect S_{(H)}) \to \check{X}_{(H)}
	\end{equation}
	yields a homeomorphism \( \varphi_m: X \intersect S_{(H)} \to X \intersect \check{V}_{(H)} \), which we take as a chart on \( \check{X}_{(H)} \).
	The frontier property of the decomposition of \( \check{X} \) is inherited from the one of \( X \).
	The reasoning is identical to the proof of \cref{prop:orbitTypeStratification:ofOrbitSpace}.
	However, this time the calculation~\eqref{eq:orbitTypeStratification:relationFrontierOrbitSpace} has to be replaced by
	\begin{equation}\begin{split}
		\check{X}_{(H)} \intersect \closureSet{\check{X}_{(K)}} 
			&= \pi\left(X_{(H)}\right) \intersect \closureSet{\pi\left(X_{(K)}\right)} \\
			&= \pi\left(X_{(H)}\right) \intersect \pi\left(\closureSet{X_{(K)}}\right) \\
			&= \pi\left(X_{(H)} \intersect \closureSet{X_{(K)}}\right),
	\end{split}\end{equation}
	which is justified again by \( G \)-invariance of the sets \( X_{(H)} \) and \( X_{(K)} \).
\end{proof}
	
\subsection{Local finiteness}
\label{sec:orbitTypeStratification:locallyFinite}
In the finite-dimensional setting, it is often helpful to know that the stratification is locally finite.
This feature is crucial for understanding the local structure, for instance to construct or analyze a neighborhood of a point by recursive methods.
In order to show that the orbit type stratification is locally finite, a linear slice can be used to reduce the problem to the linear action of a compact group on a vector space.
For finite-dimensional vector spaces, every linear action by a compact group has only finitely many orbit types \parencite[Proposition~4.2.8]{Sniatycki2013}.
However, the proof uses induction on the dimension of the vector space and thus does not generalize to infinite-dimensional spaces.
In fact, we now give an example of an action of a compact group on an infinite-dimensional vector space with infinitely many orbit types.
\begin{example}
	\label{ex:orbitTypeStratification:locallyFinite:testFunctions}
	Let \( M \) be a finite-dimensional manifold and let \( G \) be a compact Lie group which acts smoothly on \( M \).
	The vector space \( \scFunctionSpace(M) \) of test functions has as the topological dual the space of distributions \( \DistributionSpace(M) \).
	The \( G \)-action on \( M \) yields an action on test functions by precomposition and thus also gives a linear action on \( \DistributionSpace(M) \):
	\begin{equation}
		\dualPair{g \cdot T}{\phi} = \dualPair{T}{\phi \circ g} \quad \text{for } T \in \DistributionSpace(M), \phi \in \scFunctionSpace(M).
	\end{equation}
	Now, for the delta distribution \( T = \delta_m \) at a point \( m \in M \) we have
	\begin{equation}
		\dualPair{g \cdot \delta_m}{\phi} = \dualPair{\delta_m}{\phi \circ g} = \phi(g \cdot m) = \dualPair{\delta_{g \cdot m}}{\phi}.
	\end{equation}
	Hence, \( g \cdot \delta_m = \delta_{g \cdot m} \).
	In particular, the stabilizer of \( \delta_m \) is equal to \( G_m \).
	Thus, there are at least as many orbit types for the action of \( G \) on \( \DistributionSpace(M) \) as there are orbit types for the \( G \)-action on \( M \).
	However, there are examples of actions of compact Lie groups on non-compact manifolds with infinitely many orbit types\footnote{
		For example, consider an ensemble of countably many disks.
		Since each disk may be rotated independently with a different speed, we can construct an action of \( \UGroup(1) \) such that all the subgroups \( \Z \slash n \Z \) with \( n \in \N \) occur as stabilizers.
		The disks may be glued together to yield an example that does not rely on having infinitely many connected components, see \parencite{Yang1957} for details. 
	}.
	Therefore, in such cases, the action on \( \DistributionSpace(M) \) has infinitely many orbit types.
\end{example}

\subsection{Regularity properties}

It is well-known that the orbit type stratification of a finite-dimensional \( G \)-manifold satisfies the so-called Whitney regularity conditions (A) and (B), see \parencite[Definition~1.4.3]{Pflaum2001a}.
The quintessence of these regularity conditions is to control how the tangent space to the stratum behaves when one moves closer to the frontier.
The Whitney condition (A) entails that for any sequence of points \( x_i \) in a stratum \( X_\sigma \) that converges to \( x \in X_\varsigma \) and for which the sequence of tangent spaces \( \TBundle_{x_i} X_\sigma \) converges in the Grassmann bundle of \( \TBundle X \) to a space \( \tau \), we have \( \TBundle_x X_\varsigma \subseteq \tau \).
The Whitney condition (B) is similar in spirit and also involves a limit of tangent spaces.
A more quantitative statement is required by the Verdier condition, which roughly states that for nearby points \( x \in X_\sigma \) and \( y \in X_\varsigma \) there exists a constant \( C > 0 \) such that
\begin{equation}
	d_{\textrm{Gr}} \left(\TBundle_x X_\sigma \, , \TBundle_y X_\varsigma\right) < C \, \norm{x - y},
\end{equation}
where \( d_{\textrm{Gr}} \) is a certain metric on the Grassmann bundle.
All these regularity conditions involve a limit procedure or a metric on the Grassmann bundle.
Although the Grassmann bundle clearly is the right arena to talk about the relation between different tangent spaces, it is not straightforward to generalize these notions of regularity to the infinite-dimensional setting. 
In particular, in infinite dimensions, the theory of continuous linear maps is more delicate and there is no natural topology on the Grassmann bundle.
Thus, it is not clear how the Whitney or Verdier conditions can be transferred to the infinite-dimensional context.

In the finite-dimensional context, \textcite{Giacomoni2014} observed that the orbit type stratification of \( M \) satisfies a very strong form of the Verdier condition.
Namely, \( d_{\textrm{Gr}} \left(\TBundle_x X_\sigma \,, \TBundle_y X_\varsigma\right) = 0 \), because the tangent space to a frontier stratum is contained in the tangent space of the original stratum.
This property does admit a generalization to the infinite-dimensional setting.
In order to formalize the observation of \parencite{Giacomoni2014}, we introduce the following regularity condition.
For the following, the reader might wish to recall \cref{def:stratification}.
\begin{defn}
	Let \( X \) be a stratified space.
	We say that a pair \( (X_\varsigma, X_\sigma) \) of strata satisfying \( X_\varsigma < X_\sigma \) is \emphDef{smoothly regular} if for all \( x \in X_\varsigma \) there exists an open neighborhood \( U \) of \( x \) in \( X \) such that for all \( y \in U \intersect X_\sigma \) there exists an injective continuous linear map \( \TBundle_x X_\varsigma \to \TBundle_y X_\sigma \).
	A stratified space is called \emphDef{smoothly regular} if all its strata are pairwise smoothly regular.
\end{defn}
\begin{figure}
	\centering
	\begin{tikzpicture}
		\fill[halfgray, opacity=0.2] (0, 0) rectangle (6,2.5);
		\draw[thick] (0,0) -- node[pos=0.95, below right]{$X_\varsigma$} (6,0);
		\path (5.5, 1.5) node[above] {$X_\sigma$};
		
		\filldraw[fill=blue2, opacity=0.3] (4.5, 0) arc [start angle=0, end angle=180, radius=2];
		\draw[blue2, thick] (0.5, 0) -- (4.5, 0);
		\path (4, 0.2) node[above, blue2] {$U$};

		\fill (2.5, 0) circle [radius=1.5pt] node(x){};
		\fill (1.5, 1) circle [radius=1.5pt] node(y){};
		\path (x) node[below] {$x$};
		\path (y) node[below] {$y$};

		\draw[-latex, thick] (x.center) -- node[below right] {$v$} +(0.8, 0); 
		\draw[-latex, thick] (y.center) -- node[below right] {$v$} +(0.8, 0); 
	\end{tikzpicture}
	\caption{Illustration of a pair \( (X_\varsigma, X_\sigma) \) of smoothly regular strata. In this case, the injection \( \TBundle_x X_\varsigma \to \TBundle_y X_\sigma \) is given by translating the vector \( v \in \TBundle_x X \) to the point \( y \).}
	\label{fig:orbitTypeStratification:smoothlyRegular}
\end{figure}
Informally speaking, the tangent spaces to the strata in a smoothly regular stratification locally only get bigger, see \cref{fig:orbitTypeStratification:smoothlyRegular} for a graphical illustration.
Since we have already seen in \cref{prop::slice:sliceNeighbourhoodHasSubconjugatedStabilizer} that (locally) the stabilizer only decreases, and since the size of the stabilizer is roughly inversely proportional to the size of the corresponding orbit, one would expect that the orbit type stratification is smoothly regular.
In fact, the proof of \parencite{Giacomoni2014} generalizes to the infinite-dimensional realm with only a few slight modifications.
\begin{prop}
	Let \( M \) be a \( G \)-manifold with proper \( G \)-action admitting a linear slice \( S \) at every point.
	Assume that the \( G_m \)-action on \( S \) has a linear slice\footnote{By \cref{prop:slice:sliceTheoremLinearAction}, this assumption is redundant for a Fréchet \( G \)-manifold \( M \) with proper \( G \)-action.}.
	Moreover, assume that the approximation property \( M_{\geq (H)} \subseteq \closureSet{M_{(H)}} \) holds for every orbit type \( (H) \).
	Let \( (K) \leq (H) \) be two isotropy types and let \( m \in M_{(H)} \).
	Then, there exists an open neighborhood \( V \) of \( m \) in \( M \) such that for all \( v \in V \intersect M_{(K)} \) we have an injective continuous linear map \( \TBundle_m M_{(H)} \to \TBundle_v M_{(K)} \).
	In particular, the orbit type stratification of \( M \) is smoothly regular.
\end{prop}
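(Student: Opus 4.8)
The plan is to reduce everything to the linear slice representation at \( m \) and then to exploit the \emph{linearity} of the stabilizer action in order to compare the tangent spaces of the two orbit type strata directly, following the idea of \textcite{Giacomoni2014}. Since the assertion is local and every translation preserves the \( G \)-invariant subsets \( M_{(H)} \) and \( M_{(K)} \), I first pass to a slice neighborhood. Let \( S \) be a linear slice at \( m \), realized as an open \( G_m \)-invariant neighborhood of \( 0 \) in a locally convex space \( X \) carrying a continuous linear \( G_m \)-representation, with \( m \) corresponding to \( 0 \). Choosing a local section \( \chi \) as in \iref{i::slice:SliceDefLocallyProduct}, the slice diffeomorphism \( \chi^S: U \times S \to V \) identifies the slice neighborhood \( V \) of \( m \) with \( U \times S \), where \( U \subseteq G \slash G_m \) is an open neighborhood of the identity coset. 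As recalled in the proof of \cref{prop::lieGroup:orbitTypeLocallyClosedLocalSubmanifold}, \( \chi^S \) restricts to a diffeomorphism \( U \times S_{(L)} \to V \intersect M_{(L)} \) for every orbit type \( (L) \). Recalling \( (G_m) = (H) \), this yields \( V \intersect M_{(H)} \isomorph U \times S_{(G_m)} \) and \( V \intersect M_{(K)} \isomorph U \times S_{(K)} \).

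I then compute the two tangent spaces in these product coordinates. Since the action is proper, \cref{prop:properAction:sliceOrbitTypeEqualsSliceStab} gives \( S_{(G_m)} = S_{G_m} \); under the linear slice identification, \( S_{G_m} \) is an open subset of the closed fixed point subspace \( X_{G_m} = \set{x \in X \given h \cdot x = x \text{ for all } h \in G_m} \), so that \( \TBundle_0 S_{(G_m)} = X_{G_m} \). Consequently,
\begin{equation}
	\TBundle_m M_{(H)} \isomorph \TBundle_{\equivClass{e}} U \oplus X_{G_m}.
\end{equation}
Likewise, for a point \( v \in V \intersect M_{(K)} \) corresponding to \( (\equivClass{g}, x) \) with \( x \in S_{(K)} \), I obtain
\begin{equation}
	\TBundle_v M_{(K)} \isomorph \TBundle_{\equivClass{g}} U \oplus \TBundle_x S_{(K)}.
\end{equation}

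The heart of the argument is the inclusion \( X_{G_m} \subseteq \TBundle_x S_{(K)} \). I prove it by exhibiting, for each \( w \in X_{G_m} \), a smooth curve through \( x \) lying inside \( S_{(K)} \) with velocity \( w \), namely the straight segment \( t \mapsto x + t w \), which remains in the open set \( S \) for small \( t \). By \cref{prop::slice:mHasMaximalStabilizerOfWholeSlice}, every stabilizer occurring in the slice is contained in \( G_m \); hence, if \( g \) fixes \( x + t w \), then \( g \in G_m \), and since the \( G_m \)-action is linear and \( w \) is \( G_m \)-fixed, \( g \cdot (x + t w) = g \cdot x + t w \), which equals \( x + t w \) precisely when \( g \cdot x = x \). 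Therefore \( G_{x + t w} = G_x \) for all admissible \( t \), so the whole segment has the constant orbit type \( (G_x) = (K) \) and thus lies in \( S_{(K)} \); as \( S_{(K)} \) is a submanifold, its tangent space at \( x \) contains the velocity \( w \). Finally, because \( G \) acts transitively on \( G \slash G_m \), left translation by \( \chi(\equivClass{g}) \) is a diffeomorphism \( \lambda_{\equivClass{g}} \) of \( G \slash G_m \) sending \( \equivClass{e} \) to \( \equivClass{g} \), and \( \tangent_{\equivClass{e}} \lambda_{\equivClass{g}}: \TBundle_{\equivClass{e}} U \to \TBundle_{\equivClass{g}} U \) is a continuous linear isomorphism. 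Combining both ingredients, the map
\begin{equation}
	\TBundle_m M_{(H)} \isomorph \TBundle_{\equivClass{e}} U \oplus X_{G_m}
		\longrightarrow
	\TBundle_{\equivClass{g}} U \oplus \TBundle_x S_{(K)} \isomorph \TBundle_v M_{(K)},
	\qquad
	(\xi, w) \mapsto \bigl(\tangent_{\equivClass{e}} \lambda_{\equivClass{g}}(\xi), w\bigr),
\end{equation}
is continuous, linear and injective, since both components are injective and map into the two summands of a direct sum. As the slice neighborhood \( V \) serves simultaneously for every \( v \in V \intersect M_{(K)} \), this proves the claim, and smooth regularity of the orbit type stratification follows at once.

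The step I expect to require the most care is the submanifold property of the lower orbit type piece \( S_{(K)} \), which is what makes \( \TBundle_x S_{(K)} \) meaningful: this is precisely where the hypothesis that the \( G_m \)-action on \( S \) admits a linear slice enters, via \cref{prop::lieGroup:orbitTypeLocallyClosedLocalSubmanifold} applied to the \( G_m \)-manifold \( S \). A second point demanding attention is the bookkeeping between conjugacy in \( G \) and conjugacy in \( G_m \): one must check that the segment \( t \mapsto x + t w \), whose stabilizer stays equal to \( G_x \) as a subgroup, remains in the single stratum \( M_{(K)} \), which holds because the orbit type \( (G_x) \) in \( M \) is determined by the subgroup \( G_x \) itself. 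The remaining identifications of tangent spaces with the product summands are routine consequences of the slice diffeomorphism.
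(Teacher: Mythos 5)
Your proposal is correct, and while its scaffolding coincides with the paper's --- localization to the linear slice, the product decompositions \( \TBundle_m M_{(H)} \isomorph \TBundle_{\equivClass{e}} U \oplus X_{G_m} \) and \( \TBundle_v M_{(K)} \isomorph \TBundle_{\equivClass{g}} U \oplus \TBundle_x S_{(K)} \), and transport of the \( U \)-factor by left translation --- you establish the key inclusion \( X_{G_m} \subseteq \TBundle_x S_{(K)} \) by a genuinely different mechanism. The paper chooses a representative \( K \subseteq G_m \), invokes \cref{prop::lieGroup:isotropyTypeSubmanifold} to view \( S_K \) as a submanifold of \( S_{(K)} \), and uses \cref{prop:properAction:sliceOrbitTypeOpenInSupOrbitType} to see that \( S_K \) is open in the closed fixed-point subspace \( X_{\geq K} \), whence \( \TBundle_s S_K \isomorph X_{\geq K} \) and the inclusion follows from \( X_{G_m} \subseteq X_{\geq K} \). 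You instead exhibit explicit curves: for \( w \in X_{G_m} \), the segment \( t \mapsto x + t w \) has constant stabilizer \( G_x \) --- this is where \cref{prop::slice:mHasMaximalStabilizerOfWholeSlice}, the linearity of the \( G_m \)-action and the \( G_m \)-invariance of \( w \) combine correctly --- so the segment stays in \( S_{(K)} \) and its velocity lies in \( \TBundle_x S_{(K)} \), the local model of that submanifold being a closed subspace. This buys self-containedness: you bypass both auxiliary propositions, in particular \cref{prop::lieGroup:isotropyTypeSubmanifold}, whose normalizer hypothesis would otherwise need checking (it is automatic here only because \( G_m \) is compact, hence finite-dimensional); and your argument works for every \( x \in S_{(K)} \) without any ``sufficiently close'' caveat, and even sidesteps the paper's implicit bookkeeping that the chosen representative \( K \) equals \( G_s \) for the point at hand. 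The price is that you need \( S_{(K)} \) itself to be a submanifold, which you correctly obtain, exactly as the paper does, from \cref{prop::lieGroup:orbitTypeLocallyClosedLocalSubmanifold} applied to the \( G_m \)-manifold \( S \). The paper's route yields the marginally stronger conclusion \( X_{\geq K} \subseteq \TBundle_s S_{(K)} \), but the asserted injection only requires the smaller subspace \( X_{G_m} \), so nothing is lost; your remaining identifications, including \( S_{(G_m)} = S_{G_m} = S \intersect X_{G_m} \) via \cref{prop:properAction:sliceOrbitTypeEqualsSliceStab} and the maximal-stabilizer property, match the paper's.
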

\begin{proof}
	As this is a local statement, we can investigate it in a slice neighborhood.
	Let \( S \) be a linear slice at \( m \).
	Without loss of generality, we may regard \( S \) as an open neighborhood of \( 0 \) in a locally convex vector space \( X \) carrying a linear \( G_m \)-representation, see \cref{def:slice:linearSlice}.
	We identify an open neighborhood \( V \) of \( m \) with \( U \times S \) via the slice diffeomorphism \( \chi^S: U \times S \to V \) as in \iref{i::slice:SliceDefLocallyProduct}.
	By equivariance of the stabilizer subgroup, we have \( V_{(H)} \isomorph U \times S_{(H)} \) and \( V_{(K)} \isomorph U \times S_{(K)} \).
	Since the \( G_m \) action on \( S \) has slices, \cref{prop::lieGroup:orbitTypeLocallyClosedLocalSubmanifold} shows that \( S_{(H)} \) and \( S_{(K)} \) are submanifolds of \( S \).
	Thus, viewing the tangent spaces of \( S_{(H)} \) and of \( S_{(K)} \) as subspaces of \( X \), it suffices to show that
	\begin{equation}
		\label{eq:regularityProperties:sliceOrbitTypeInclusion}
		\TBundle_m S_{(H)} \subseteq \TBundle_s S_{(K)}	
	\end{equation}
	holds for all \( s \in S_{(K)} \) sufficiently close to \( m \).
	
	Next, choose a representative \( K \) of \( (K) \) such that \( K \subseteq G_m \).
	Recall from \cref{prop:properAction:sliceOrbitTypeEqualsSliceStab} that \( S_{(H)} = S_{G_m} \), because the action is proper.
	Thus, it is enough to prove that \( \TBundle_m S_{G_m} \subseteq \TBundle_s S_{K} \) as \( S_K \) is a submanifold of \( S_{(K)} \) by \cref{prop::lieGroup:isotropyTypeSubmanifold}.
	For that purpose, first note that the subset \( X_{\geq K} \) of \( K \)-invariant vectors is a closed linear subspace of \( X \).
	Indeed, \( X_{\geq K} \) is a linear subspace, because the action is linear, and closedness follows from the representation
	\begin{equation}
		X_{\geq K} = \bigIntersection_{k \in K} X_k,
	\end{equation}
	where \( X_k \defeq \set{x \in X \given k \cdot x = x} \) is closed for all \( k \in X \).
	Since the action of \( G_m \) on \( X \) admits slices, \( S_K \) is open in \( X_{\geq K} \) according to \cref{prop:properAction:sliceOrbitTypeOpenInSupOrbitType} and so \( \TBundle_s S_K \isomorph X_{\geq K} \).
	But, of course, \( X_{G_m} \isomorph \TBundle_m S_{G_m} \) is contained in \( X_{\geq K} \) as \( H \supseteq K \).
	Thus, \( \TBundle_m S_{G_m} \subseteq \TBundle_s S_K \).
\end{proof}

\appendix
\section{Stratified Spaces}
\label{sec::stratification}

Roughly speaking, a stratified space is a topological space that is built from a collection of manifolds fitting together in a particularly nice way.
\begin{defn}
	\label{def:stratification}
	Let \( X \) be Hausdorff topological space. 
	A partition \( \stratification{Z} \) of \( X \) into subsets \( X_\sigma \) indexed by \( \sigma \in \Sigma \) is called a \emphDef{stratification} of \( X \) if the following conditions are satisfied:
	\begin{thmenumerate}[label=(DS\arabic*), ref=(DS\arabic*), leftmargin=*]
		\item \label{i::stratification:stratumIsManifold} 
			Every piece \( X_\sigma \) is a locally closed manifold (whose manifold topology coincides with the relative topology).
			We will call \( X_\sigma \) a \emphDef{stratum} of \( X \). 
		\item \label{i::stratification:frontierCondition} (frontier condition)
			Every pair of disjoint strata \( X_\varsigma \) and \( X_\sigma \) with \( X_\varsigma \cap \closureSet{X_\sigma} \neq \emptyset \) satisfies:
			\begin{thmenumerate}[label=\alph*), ref=(DS2\alph*)]
				\item \label{i:stratification:frontierConditionBoundary}
					\( X_\varsigma \) is contained in the frontier \( \closureSet{X_\sigma} \setminus X_\sigma \) of \( X_\sigma \).
				\item \label{i:stratification:frontierConditionIntersection}
					\( X_\sigma \) does not intersect with \( \closureSet{X_\varsigma} \).
			\end{thmenumerate}
			In this case, we write \( X_\varsigma < X_\sigma \) or \( \varsigma < \sigma \) and call \( X_\varsigma \) a \emphDef{frontier stratum} of \( X_\sigma \). 
	\end{thmenumerate}		
	The pair \( (X, \stratification{Z}) \) is called a \emphDef{stratified space}.
\end{defn}

The relation \( X_\varsigma < X_\sigma \) introduced in \iref{i::stratification:frontierCondition} is indeed a strict partial order on the set of strata due to the property \iref{i:stratification:frontierConditionIntersection}, see \parencite[Proposition~2.5]{KondrackiRogulski1983} (which justifies the notation).

\begin{remarks}
	\item
		In some cases, one can use the manifold structure of the stratum combined with \iref{i:stratification:frontierConditionBoundary} to show that \iref{i:stratification:frontierConditionIntersection} is automatically satisfied.
		As \textcite[Theorem~3.1]{KondrackiRogulski1983} discuss, this happens, for example, if all strata are Banach manifolds.
		This observation also explains why \iref{i:stratification:frontierConditionIntersection} is rarely included in the definition of a stratification in the finite-dimensional setting.
	\item 
		The strata are often indexed by a countable set and we will call such a stratification \emphDef{countable}.
	\item
		In the finite-dimensional context, one usually assumes that the stratification is locally finite, \ie that every point has a neighborhood which intersects only finitely many strata (see, for example, \parencite[Section~4.1]{Sniatycki2013}).
		The orbit type stratification of a finite-dimensional proper \( G \)-action is locally finite as a result of the fact that an action of a compact group on a finite-dimensional vector space has only finitely many orbit types.
		However, as we have seen in \cref{sec:orbitTypeStratification:locallyFinite}, this is no longer the case for infinite-dimensional representations.
		Thus, in general, demanding a locally finite stratification is a too strong requirement in infinite dimensions.
	\item 
		We note that the terminology employed in the theory of stratified spaces varies from author to author.
		For example in \parencite[Chapter~1]{Pflaum2001}, \parencite[Chapter~1]{Pflaum2001a} and \parencite[Chapter~1]{OrtegaRatiu2003} the notion of a decomposed space is used for what we call a stratification.
		A stratification (in their sense) assigns to every point a set germ in such a way that (locally) a decomposition is induced.
		In the finite-dimensional setting, one can pass from every such stratification to a canonical decomposition which satisfies a certain minimality condition, see \parencite[Proposition~1.2.7]{Pflaum2001a} or \parencite[Proposition~1.11]{Pflaum2001}.
		Our terminology is more in line with the common usage in infinite dimensions, compare \parencites{KondrackiRogulski1983}[p.~551]{AbbatiCirelliEtAl1986}[Definition~4.4.4]{KondrackiRogulski1986}.

		Finally, in our context, it is convenient to use the slightly more general notion of a manifold as proposed in \parencites[Section~II.1]{Lang1999}[Definition~3.1.1]{MarsdenRatiuEtAl2002}.
		There, different connected components of the manifold are allowed to be modeled on non-isomorphic vector spaces.
		The related concept of a \( \Sigma \)-manifold is discussed in \parencite[Remark~1.1.4]{Pflaum2001a}.
\end{remarks}

The frontier condition allows us to write the closure of a stratum as the union of smaller strata.
\begin{lemma} \label{prop::stratification:frontierConditionImpliesClosureFormula}
	If \( X_\sigma \) is a stratum in the stratified space \( X \),  then
	\begin{equation}
		\closureSet{X_\sigma} = \bigUnion_{\varsigma \leq \sigma} X_\varsigma. 
		\qedhere
	\end{equation}
\end{lemma}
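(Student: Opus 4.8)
The plan is to exploit that the strata form a partition of \( X \) and to analyze the closure \( \closureSet{X_\sigma} \) one stratum at a time. First I would write
\[
	\closureSet{X_\sigma} = \bigUnion_{\tau \in \Sigma} \bigl(\closureSet{X_\sigma} \intersect X_\tau\bigr),
\]
which is legitimate precisely because \( \set{X_\tau}_{\tau \in \Sigma} \) partitions \( X \). The whole problem then reduces to identifying, for each index \( \tau \), what the intersection \( \closureSet{X_\sigma} \intersect X_\tau \) is.

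The diagonal term is immediate: since \( X_\sigma \subseteq \closureSet{X_\sigma} \) we have \( \closureSet{X_\sigma} \intersect X_\sigma = X_\sigma \). For \( \tau \neq \sigma \) the strata \( X_\tau \) and \( X_\sigma \) are disjoint, and the key dichotomy is whether \( X_\tau \intersect \closureSet{X_\sigma} \) is empty or not. If it is empty, the term contributes nothing. If it is nonempty, then by the very definition of the order introduced in \iref{i::stratification:frontierCondition} we have \( X_\tau < X_\sigma \), and the frontier clause \iref{i:stratification:frontierConditionBoundary} yields \( X_\tau \subseteq \closureSet{X_\sigma} \setminus X_\sigma \); in particular \( \closureSet{X_\sigma} \intersect X_\tau = X_\tau \). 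Hence each off-diagonal term is either void or contributes the full stratum \( X_\tau \), and it contributes exactly when \( \tau < \sigma \).

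Assembling these observations gives
\[
	\closureSet{X_\sigma} = X_\sigma \cup \bigUnion_{\tau < \sigma} X_\tau = \bigUnion_{\varsigma \leq \sigma} X_\varsigma,
\]
which is the claim. I would also record the reverse containment directly as a sanity check: if \( \varsigma = \sigma \) then \( X_\sigma \subseteq \closureSet{X_\sigma} \) trivially, and if \( \varsigma < \sigma \) then clause \iref{i:stratification:frontierConditionBoundary} again gives \( X_\varsigma \subseteq \closureSet{X_\sigma} \), so that \( \bigUnion_{\varsigma \leq \sigma} X_\varsigma \subseteq \closureSet{X_\sigma} \).

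There is essentially no hard analytic step here; the argument is set-theoretic bookkeeping resting on the partition property together with the definition of the frontier order. The only point that requires care is the logical equivalence \enquote{\( X_\tau \intersect \closureSet{X_\sigma} \neq \emptyset \Leftrightarrow \tau < \sigma \)} for \( \tau \neq \sigma \), which holds because the defining condition of \( < \) in \iref{i::stratification:frontierCondition} is exactly the nonemptiness of that intersection, the accompanying clauses \iref{i:stratification:frontierConditionBoundary} and \iref{i:stratification:frontierConditionIntersection} being automatically guaranteed by the stratification axioms. Ensuring that this equivalence is applied so that no stratum is erroneously omitted or double-counted is the whole substance of the proof.
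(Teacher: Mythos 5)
Your proof is correct and takes essentially the same route as the paper's: both directions rest on the partition property (every point of \( \closureSet{X_\sigma} \) lies in some stratum) combined with the dichotomy supplied by the frontier condition, whereby a nonempty intersection of \( X_\tau \) with \( \closureSet{X_\sigma} \) forces \( \tau < \sigma \) and full containment \( X_\tau \subseteq \closureSet{X_\sigma} \). Your stratum-by-stratum decomposition of the closure is merely a more explicit bookkeeping of the paper's two-inclusion argument, and your remark that the clauses of the frontier condition hold automatically once the intersection is nonempty matches the paper's reading of the definition.
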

\begin{proof}
	The frontier condition \iref{i::stratification:frontierCondition} implies that every stratum \( X_\varsigma \) with \( X_\varsigma \cap \closureSet{X_\sigma} \neq \emptyset \) is completely contained in \( \closureSet{X_\sigma} \).
	Thus, \( \bigUnion_{\varsigma \leq \sigma} X_\varsigma \subseteq \closureSet{X_\sigma} \).
	For the converse inclusion, observe that every point in the closure of \( X_\sigma \) has to lie in some stratum \( X_\varsigma \).
	Thus, \( X_\varsigma \cap \closureSet{X_\sigma} \neq \emptyset \) and hence by definition \( \varsigma \leq \sigma \).
\end{proof}
The converse of the previous observation is true as well.
\begin{prop} \label{prop::stratification:ClosureFormulaImpliesFrontierCondition}
	Let \( X \) be a Hausdorff topological space partitioned into locally closed manifolds \( X_\sigma \) with \( \sigma \in \Sigma \).
	Assume that a partial order \( \unlhd \) is defined on the index set \( \Sigma \).
	If the closure of every piece \( X_\sigma \) can be written as
	\begin{equation} \label{eq::stratification:closureOfStrata}
		\closureSet{X_\sigma} = \bigUnion_{\varsigma \unlhd \sigma} X_\varsigma \, ,
	\end{equation}
	then the partition satisfies the frontier condition \iref{i::stratification:frontierCondition} and hence it is a stratification.
	Moreover, the partial order \( \leq \) defined by the frontier condition coincides with \( \unlhd \).
\end{prop}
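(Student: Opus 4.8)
The plan is to verify the two defining properties of a stratification from \cref{def:stratification} in turn. Since the partition is by hypothesis into locally closed manifolds and \( X \) is Hausdorff, property \iref{i::stratification:stratumIsManifold} comes for free, so only the frontier condition \iref{i::stratification:frontierCondition} together with the identification of the induced order with \( \unlhd \) remains. The single structural fact I would extract from the closure formula \eqref{eq::stratification:closureOfStrata} and then reuse throughout is this: because \( \closureSet{X_\sigma} = \bigUnion_{\varsigma \unlhd \sigma} X_\varsigma \) is a \emph{disjoint} union of nonempty partition pieces, any point of \( \closureSet{X_\sigma} \) that lies in some stratum \( X_\varsigma \) forces that stratum to be one of the pieces occurring in the union, i.e. \( \varsigma \unlhd \sigma \). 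I would record at the outset that in a partition distinct indices label disjoint, nonempty sets, so "the piece containing a given point" is unambiguous; this bookkeeping is the only genuine subtlety and I expect it to be the main (if modest) obstacle, since everything else is formal.

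For the frontier condition I would fix two distinct strata \( X_\varsigma, X_\sigma \) with \( X_\varsigma \cap \closureSet{X_\sigma} \neq \emptyset \) and pick a point \( x \) in the intersection. By the observation above, the unique partition piece containing \( x \) is \( X_\varsigma \), and it appears in \eqref{eq::stratification:closureOfStrata}, so \( \varsigma \unlhd \sigma \) and in fact \( X_\varsigma \subseteq \closureSet{X_\sigma} \). Because the two strata are disjoint, this sharpens to \( X_\varsigma \subseteq \closureSet{X_\sigma} \setminus X_\sigma \), which is precisely \iref{i:stratification:frontierConditionBoundary}.

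For the complementary clause \iref{i:stratification:frontierConditionIntersection} I would argue by contradiction. If in addition \( X_\sigma \cap \closureSet{X_\varsigma} \neq \emptyset \), the same reasoning with the roles of \( \varsigma \) and \( \sigma \) exchanged yields \( \sigma \unlhd \varsigma \); combined with \( \varsigma \unlhd \sigma \), antisymmetry of the partial order \( \unlhd \) forces \( \varsigma = \sigma \), contradicting that the strata are distinct. Hence \( X_\sigma \cap \closureSet{X_\varsigma} = \emptyset \), so the frontier condition \iref{i::stratification:frontierCondition} holds, and together with \iref{i::stratification:stratumIsManifold} this makes the partition a stratification.

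Finally, to see that the order \( \leq \) defined by \iref{i::stratification:frontierCondition} coincides with \( \unlhd \), I would compare the two strict relations on distinct indices and then pass to reflexive closures. The computation of the previous two paragraphs already gives \( \varsigma < \sigma \Rightarrow \varsigma \unlhd \sigma \). Conversely, if \( \varsigma \unlhd \sigma \) with \( \varsigma \neq \sigma \), then \eqref{eq::stratification:closureOfStrata} yields \( X_\varsigma \subseteq \closureSet{X_\sigma} \); since \( X_\varsigma \) is nonempty this gives \( X_\varsigma \cap \closureSet{X_\sigma} \neq \emptyset \), i.e. \( \varsigma < \sigma \). Thus the strict parts agree, and as both \( \leq \) and \( \unlhd \) are the reflexive closures of their strict parts, \( \leq \; = \; \unlhd \). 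No input beyond the disjointness built into \eqref{eq::stratification:closureOfStrata} and antisymmetry of \( \unlhd \) is required, so I anticipate no further difficulty.
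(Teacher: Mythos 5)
Your proposal is correct and follows essentially the same route as the paper's proof: both use the disjointness of the partition pieces to read off \( \varsigma \unlhd \sigma \) from the closure formula \eqref{eq::stratification:closureOfStrata}, establishing \iref{i:stratification:frontierConditionBoundary}, and both derive \iref{i:stratification:frontierConditionIntersection} by contradiction via the antisymmetry of \( \unlhd \). The only difference is cosmetic: you spell out the coincidence of \( \leq \) with \( \unlhd \) (both strict parts, then reflexive closures, using nonemptiness of the pieces), whereas the paper leaves this final claim essentially implicit in the two preceding steps.
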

\begin{proof}
	Let \( X_\sigma \) and \( X_\varsigma \) be two disjoint pieces such that \( X_\sigma \cap \closureSet{X_\varsigma} \neq \emptyset \).
	Then,~\eqref{eq::stratification:closureOfStrata} implies that \( X_\varsigma \) coincides with one of the pieces in the union \(  \bigUnion_{\varsigma \unlhd \sigma} X_\varsigma \).
	Thus, \( \varsigma \unlhd \sigma \) and the first part \iref{i:stratification:frontierConditionBoundary} of the frontier condition is satisfied.
	\sideRemark{
		The proof shows that \iref{i:stratification:frontierConditionBoundary} is equivalent to \( \closureSet{X_\sigma} = \bigUnion_{\varsigma \leq \sigma} X_\varsigma \).
		The purpose of \iref{i:stratification:frontierConditionIntersection} is to ensure that the incidence relation \( \varsigma \leq \sigma \) is a partial order.
	}

	It remains to verify \iref{i:stratification:frontierConditionIntersection}.
	Suppose, for the sake of contradiction, that the intersection \( \closureSet{X_\varsigma} \intersect X_\sigma \) is non-empty.
	By~\eqref{eq::stratification:closureOfStrata}, this is only possible if \( \sigma \unlhd \varsigma \).
	Hence, in total, \( \sigma \unlhd \varsigma \unlhd \sigma \).
	Since \( \unlhd \) is a partial order, we conclude \( \sigma = \varsigma \) which leads to a contradiction as the pieces \( X_\sigma \) and \( X_\varsigma \) were assumed to be disjoint.
\end{proof}

\begin{refcontext}[sorting=nyt]{}
	\printbibliography
\end{refcontext}

\end{document}